\newcommand\restr[2]{{
  \left.\kern-\nulldelimiterspace 
  #1 
  \vphantom{\big|} 
  \right|_{#2} 
  }} 
\newcommand{\comment}[1]{}
\def\N{\mathbb{N}} 
\def\Z{\mathbb{Z}} 
\def\Q{\mathbb{Q}}
\theoremstyle{plain}
\newtheorem{teo}[equation]{Theorem} 
\newtheorem{thm}[equation]{Theorem}
\newtheorem{lema}[equation]{Lemma}
\newtheorem{lem}[equation]{Lemma}
\newtheorem{coro}[equation]{Corollary} 
\newtheorem{prop}[equation]{Proposition}
\theoremstyle{definition}
\newtheorem{defi}[equation]{Definition} 
\newtheorem{ex}[equation]{Example}
\newtheorem{stan}[equation]{Standing assumptions}
\newtheorem{conve}[equation]{Convention}
\theoremstyle{remark} 
 \newtheorem{rem}[equation]{Remark}
 \newtheorem{nota}[equation]{Notation}
  \numberwithin{equation}{section}
\newcommand{\cB}{\mathcal B}
\newcommand{\cE}{\mathcal E}
\newcommand{\cF}{\mathcal F}
\newcommand{\cK}{\mathcal K}
\newcommand{\cP}{\mathcal P}
\newcommand{\cR}{\mathcal R}
\newcommand{\cT}{\mathcal T}
\newcommand{\cU}{\mathcal U}
\def\fA{\mathfrak{A}}
\newcommand{\aha}{{{\rm Alg}_\ell}}
\newcommand{\lra}{\longrightarrow}
\newcommand{\iso}{\overset{\sim}{\lra}}
\newcommand{\simh}{\approx}
\newcommand{\onto}{\twoheadrightarrow}
\def\reg{\operatorname{reg}}
\def\sing{\operatorname{sing}}
\def\sink{\operatorname{sink}}
\def\inf{\operatorname{inf}}
\def\sour{\operatorname{sour}}
\def\triqui{\vartriangleleft}
\def\mspan{\operatorname{span}}
\def\supp{\operatorname{supp}}
\def\inc{\operatorname{inc}}
\def\can{\operatorname{can}}
\def\rk{\operatorname{rk}}
\def\tors{\operatorname{tors}}
\def\diag{\operatorname{diag}}
\def\Gl{\operatorname{GL}}
\def\GL{\Gl}
\def\ad{\operatorname{ad}}
\def\ev{\operatorname{ev}}
\def\id{\operatorname{id}}
\renewcommand{\path}{\cP}
\newcommand{\coker}{{\rm Coker}}
\renewcommand{\ker}{{\rm Ker}}
\newcommand{\ju}{\mathfrak{j}}
\def\Ext{\operatorname{Ext}}
\def\cExt{\mathcal{E}xt}
\def\ext{\Ext}
\def\tor{\operatorname{Tor}}
\def\Hom{\operatorname{Hom}}
\title{Algebraic bivariant $K$-theory and Leavitt path algebras}
\date{}
\author{Guillermo Corti\~nas}
\author{Diego Montero}
\email{gcorti@dm.uba.ar, dmontero@dm.uba.ar}
\urladdr{http://mate.dm.uba.ar/\~{}gcorti}
\address{Dep. Matem\'atica-IMAS, FCEyN-UBA\\ Ciudad Universitaria Pab 1\\
C1428EGA Buenos Aires\\ Argentina}
\thanks{Both authors were supported by CONICET and by grants UBACyT 20021030100481BA and  PICT 2013-0454. Corti\~nas research was supported by grant MTM2015-65764-C3-1-P (Feder funds).}
\begin{document}
\begin{abstract}
We investigate to what extent homotopy invariant, excisive and matrix stable homology theories help one distinguish between the Leavitt path algebras $L(E)$ and $L(F)$ of graphs $E$ and $F$ over a commutative ground ring $\ell$. We approach this by studying the structure of such algebras under bivariant algebraic $K$-theory $kk$, which is the universal homology theory with the properties above. We show that under very mild assumptions on $\ell$, for a graph $E$ with finitely many vertices and reduced incidence matrix $A_E$, the structure of $L(E)$ in $kk$ depends only on the groups $\coker(I-A_E)$ and $\coker(I-A_E^t)$. We also prove that for Leavitt path algebras, $kk$ has several properties similar to those that Kasparov's bivariant $K$-theory has for $C^*$-graph algebras, including analogues of the Universal coefficient and K\"unneth theorems of Rosenberg and Schochet.
\end{abstract}

\maketitle

\section{Introduction}\label{sec:intro}
This article is the first part of a two part project motivated by the classification problem for Leavitt path algebras \cite{alps}.  We consider homological invariants of such algebras; we investigate to what extent they help one distinguish between them. In this first part we investigate general graphs and their algebras over general commutative ground rings; the second part \cite{dwclass} focuses mostly on purely infinite simple unital algebras over a field. We fix a commutative ring $\ell$ and write $L(E)$ for the Leavitt path algebra of a graph $E$ over $\ell$. Here a \emph{homology theory} of the category $\aha$ of algebras is simply a functor $X:\aha\to\cT$ with values in some triangulated category $\cT$. If $S$ is a set and $A\in \aha$, we write $M_SA$ for the algebra of those matrices
$M:S\times S\to A$ which are finitely supported. We call a homology theory $X$ \emph{$S$-stable} if for $s\in S$ and $A\in\aha$, the inclusion $\iota_s:A\to M_SA$, $\iota_s(a)=\epsilon_{s,s}\otimes a$ induces an isomorphism $X(\iota_s)$. Write $E^0$ and $E^1$ for the sets of vertices and edges of the graph $E$. We call $X$ $E$-stable if it is $E^0\sqcup E^1\sqcup \N$-stable. Thus if $E^0$ and $E^1$ are both countable, $E$-stability is the same as stability with respect to $M_\infty=M_\N$. We are interested in those homology theories which are excisive, (polynomially) homotopy invariant and $E$-stable. For example Weibel's homotopy algebraic $K$-theory $KH$ has all these properties and, if $\ell$ is either $\Z$ or a field, then  $KH_*(L(E))=K_*(L(E))$ is Quillen's $K$-theory. There is also a universal homology theory with all the above properties, $j:\aha\to kk$ (\cite{kkwt},\cite{emathesis}); this is the bivariant $K$-theory of the title. For two algebras $A,B\in\aha$, the statement  $j(A)\cong j(B)$ is equivalent the statement that $X(A)\cong X(B)$ for any excisive, homotopy invariant and $E$-stable homology theory $X$. Let $\Omega:kk\to kk$ be the inverse suspension; if $A,B\in\aha$, put 
\[
kk_n(A,B)=\hom_{kk}(j(A),\Omega^nj(B)),\quad kk(A,B)=kk_0(A,B).
\]
By \cite{kkwt}*{Theorem 8.2.1}, setting the first variable equal to the ground ring we recover Weibel's homotopy algebraic 
$K$-theory $KH$ \cite{weih}:
\[
kk_n(\ell,B)=KH_n(B).
\] 
Set
\[
KH^n(B):=kk_{-n}(B,\ell).
\]
Recall that a vertex $v\in E^0$ is \emph{regular} if it emits a nonzero finite number of 
edges and that it is \emph{singular} otherwise. Write $\reg(E)$ and $\sing(E)$ for the sets of regular and of singular edges. Let $A_E\in \Z^{\reg(E)\times E^0}$ be the matrix whose $(v,w)$ entry is the number of edges from $v$ to $w$ and let $I\in \Z^{E^0\times\reg(E)}$ be the matrix that results from the identity matrix upon removing the columns corresponding to singular vertices. It follows from \cite{abc} that if $KH_0(\ell)=\Z$, $KH_{-1}(\ell)$=0 and $E^0$ is finite, then for the reduced incidence matrix $A_E$ we have
\begin{equation}\label{intro:bf}
KH_0(L(E))=\coker(I-A_E^t).
\end{equation}
We show here (see Section \ref{sec:leinkk}) that, abusing notation, and writing $I$ for $I^t$,
\begin{equation}\label{intro:bf*}
KH^1(L(E))=\coker(I-A_E).
\end{equation}
For $n\ge 0$, let $L_n$ be the Leavitt path algebra of the graph with one vertex and $n$ loops. Thus $L_0=\ell$ and $L_1=\ell[t,t^{-1}]$. We prove the following structure theorem.

\begin{thm}\label{intro:struct} Assume that $KH_0(\ell)=\Z$ and $KH_{-1}(\ell)$=0. Let $E$ be a graph such that $E^0$ is finite.  
 Let $d_1,\dots,d_n$ , $d_i\backslash d_{i+1}$ be the invariant factors of the torsion group $\tau(E)=\tors(K_0(L(E))$, $s=\#\sing(E)$ and  $r=\rk(KH^1(L(E))$. Let $j:\aha\to kk$ be the universal excisive, homotopy invariant, $E$-stable homology theory. Then 
\[
j(L(E))\cong j(L_0^s\oplus L_1^r \oplus \bigoplus_{i=1}^nL_{d_i+1})
\]
\end{thm}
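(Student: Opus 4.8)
The plan is to deduce the theorem from two facts established earlier in the paper: the fundamental distinguished triangle in $kk$ attached to a graph with finitely many vertices, and the descriptions of $KH_0$ and $KH^1$ of $L(E)$ as cokernels of $I-A_E^t$ and $I-A_E$. Recall from Section~\ref{sec:leinkk} (building on \cite{abc}) that for $E$ with $E^0$ finite there is a distinguished triangle
\[
j(\ell)^{\reg(E)}\xrightarrow{\,I-A_E^t\,}j(\ell)^{E^0}\longrightarrow j(L(E))\longrightarrow\Omega^{-1}j(\ell)^{\reg(E)},
\]
so that $j(L(E))$ is the cone of the morphism induced by the integer matrix $I-A_E^t\colon\Z^{\reg(E)}\to\Z^{E^0}$. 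Specializing to the graph with one vertex and $m$ loops — whose reduced incidence matrix is the $1\times1$ matrix $(1-m)$ when $m\ge1$, while its single vertex is a sink when $m=0$ — this already computes $j(L_0)=j(\ell)$, $j(L_1)\cong\operatorname{cone}\big(0\colon j(\ell)\to j(\ell)\big)\cong j(\ell)\oplus\Omega^{-1}j(\ell)$, and $j(L_m)\cong\operatorname{cone}\big(1-m\colon j(\ell)\to j(\ell)\big)$ for every $m\ge1$.

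The next step is to put $I-A_E^t$ into Smith normal form. Since $KH_0(\ell)=\Z$, the ring $\End_{kk}(j(\ell))=kk_0(\ell,\ell)=KH_0(\ell)$ is $\Z$, hence $\End_{kk}(j(\ell)^k)=M_k(\Z)$ and $\GL_k(\Z)$ acts by automorphisms on $j(\ell)^k$; so, changing bases by suitable elements of $\GL_{\#E^0}(\Z)$ and $\GL_{\#\reg(E)}(\Z)$ — which leaves the cone unchanged — we may assume that the morphism of the triangle is
\[
\id_{j(\ell)^{\rho-n}}\ \oplus\ \diag(d_1,\dots,d_n)\ \oplus\ \big(0\colon j(\ell)^{r}\to j(\ell)^{r+s}\big),
\]
where $\rho=\rk(I-A_E^t)$ and $d_1,\dots,d_n$ are the diagonal Smith entries exceeding $1$, i.e. the nontrivial invariant factors of $\tors(\coker(I-A_E^t))$. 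The block sizes are forced by the rank counts $\#\reg(E)=\rho+r$, with $r=\rk(\coker(I-A_E))=\rk(KH^1(L(E)))$ by \eqref{intro:bf*}, and $\#E^0=\#\reg(E)+s=\rho+r+s$; by \eqref{intro:bf} and the identifications of Section~\ref{sec:leinkk}, the $d_i$ are precisely the invariant factors of $\tau(E)$ occurring in the statement.

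Finally I would reassemble, using that a direct sum of distinguished triangles is distinguished, that the cone of an isomorphism vanishes, that $\operatorname{cone}\big(0\colon j(\ell)^r\to j(\ell)^{r+s}\big)\cong j(\ell)^{r+s}\oplus\Omega^{-1}j(\ell)^r$, and that multiplication by $-1$ is an automorphism of $j(\ell)$, so that $\operatorname{cone}(d_i)\cong\operatorname{cone}(-d_i)=\operatorname{cone}(1-(d_i+1))\cong j(L_{d_i+1})$. Together with the computations of $j(L_0)$ and $j(L_1)$ above, this yields
\[
j(L(E))\ \cong\ \bigoplus_{i=1}^{n}j(L_{d_i+1})\ \oplus\ j(\ell)^{r+s}\oplus\Omega^{-1}j(\ell)^r\ \cong\ \bigoplus_{i=1}^{n}j(L_{d_i+1})\ \oplus\ j(L_0)^{s}\ \oplus\ j(L_1)^{r},
\]
the last isomorphism rewriting $j(\ell)^{r+s}\oplus\Omega^{-1}j(\ell)^r$ as $j(L_0)^s\oplus\big(j(L_0)\oplus\Omega^{-1}j(\ell)\big)^{r}$. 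As $j$ carries finite direct sums of algebras to direct sums in $kk$ (\cite{kkwt}), the right-hand side is $j\big(L_0^s\oplus L_1^r\oplus\bigoplus_{i=1}^{n}L_{d_i+1}\big)$, which is the assertion.

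Granting the fundamental triangle, the whole reduction is pure triangulated-category formalism; the one point demanding genuine care is the bookkeeping of the middle step — matching the Smith data of $I-A_E^t$ to the invariants $s$, $r$ and $d_1\mid\dots\mid d_n$ of the statement, in particular checking that $\tors(K_0(L(E)))$ agrees with $\tors(\coker(I-A_E^t))$ and that $\rk(KH^1(L(E)))=\#\reg(E)-\rho$. The substantive new input — constructing the fundamental triangle and proving the formulas \eqref{intro:bf} and \eqref{intro:bf*} — belongs to Section~\ref{sec:leinkk} and is used here as a black box.
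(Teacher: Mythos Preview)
Your proof is correct and follows essentially the same approach as the paper's (Theorem~\ref{thm:structure}): both reduce to the Smith normal form of $I-A_E^t$, use the invertible integer change-of-basis matrices as automorphisms of $j(\ell)^k$ in $kk$ (via $KH_0(\ell)=\Z$), and identify the cone of the resulting diagonal map with the direct sum on the right. Your treatment is in fact slightly more careful than the paper's written proof in that you explicitly separate out the identity block $\id_{j(\ell)^{\rho-n}}$ coming from the Smith entries equal to $1$, and you note the sign adjustment $\operatorname{cone}(d_i)\cong\operatorname{cone}(-d_i)$; the paper's proof tacitly absorbs both of these points.
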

In particular, any unital Leavitt path algebra with trivial $KH_0$ is zero in $kk$. For example both $L_2$ and its Cuntz splice $L_{2^-}$ (\cite{ror}) are zero in $kk$. We also have the following corollary; here, and in any other statement which involves the image under $j$ of the Leavitt path algebras of finitely many graphs $E_1,\dots,E_n$, $j$ is understood to refer to the $\sqcup_{i=1}^nE_i$-stable $j$.

\begin{coro}\label{intro:dt}
Let $\ell$ be as in Theorem \ref{intro:struct}. The following are equivalent for graphs $E$ and $F$ with finitely many vertices.
\item[i)] $j(L(E)) \cong j(L(F))$.
\item[ii)] $KH_0(L(E)) \cong KH_0(L(F))$ and $KH^1(L(E)) \cong KH^1(L(F))$.
\item[iii)]$KH_0(L(E)) \cong KH_0(L(F))$ and $\#\sing(E) = \#\sing(F)$.
\end{coro}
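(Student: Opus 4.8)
The plan is to establish the cycle of implications $(i)\Rightarrow(ii)\Rightarrow(iii)\Rightarrow(i)$. The first implication is purely formal: $KH_0(L(E))=kk_0(\ell,L(E))=\hom_{kk}(j(\ell),j(L(E)))$ and $KH^1(L(E))=kk_{-1}(L(E),\ell)=\hom_{kk}(j(L(E)),\Omega^{-1}j(\ell))$ depend only on $j(L(E))$ — the first covariantly, the second contravariantly — so an isomorphism $j(L(E))\cong j(L(F))$ in $kk$ transports to isomorphisms $KH_0(L(E))\cong KH_0(L(F))$ and $KH^1(L(E))\cong KH^1(L(F))$.

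The engine for the other two implications is the linear-algebra identity
\[
\#\sing(E)=\rk KH_0(L(E))-\rk KH^1(L(E)).
\]
To prove it, recall from \eqref{intro:bf} and \eqref{intro:bf*} that $KH_0(L(E))$ is the cokernel of the integer matrix $I-A_E^t\colon\Z^{\reg(E)}\to\Z^{E^0}$ while $KH^1(L(E))$ is the cokernel of its transpose $I^t-A_E\colon\Z^{E^0}\to\Z^{\reg(E)}$. A matrix and its transpose have a common rank $\rho$ and, by the Smith normal form, the same list of nonzero invariant factors; hence $\rk KH_0(L(E))=\#E^0-\rho$, $\rk KH^1(L(E))=\#\reg(E)-\rho$ (and, as a bonus, $\tors KH_0(L(E))\cong\tors KH^1(L(E))$), and subtracting gives $\#E^0-\#\reg(E)=\#\sing(E)$. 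Granting the identity, $(ii)\Rightarrow(iii)$ is immediate: under $(ii)$ the two ranks on the right agree for $E$ and $F$, so $\#\sing(E)=\#\sing(F)$, and $KH_0(L(E))\cong KH_0(L(F))$ is already part of $(ii)$.

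For $(iii)\Rightarrow(i)$ I would apply Theorem \ref{intro:struct} to both $E$ and $F$. This is legitimate for the $E\sqcup F$-stable $j$ of the corollary, since such a theory is in particular $E$-stable and $F$-stable and therefore factors the corresponding universal theories through triangle functors, along which the conclusion of the theorem pushes forward. Writing $s_E,r_E$ and the invariant factors $d_1(E)\mid d_2(E)\mid\cdots$ of $\tau(E)$ as in the theorem, and likewise for $F$, I must match these three pieces of data. From the first half of $(iii)$, $KH_0(L(E))\cong KH_0(L(F))$; since $\tau(E)=\tors K_0(L(E))$ is the torsion subgroup of $KH_0(L(E))$ under our hypotheses on $\ell$ (this is precisely the identification behind \eqref{intro:bf}), the invariant-factor lists of $\tau(E)$ and $\tau(F)$ coincide. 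The second half of $(iii)$ gives $s_E=s_F$ outright, and then the displayed identity yields $r_E=\rk KH_0(L(E))-s_E=\rk KH_0(L(F))-s_F=r_F$. Consequently the two model algebras $L_0^{s_E}\oplus L_1^{r_E}\oplus\bigoplus_iL_{d_i(E)+1}$ and $L_0^{s_F}\oplus L_1^{r_F}\oplus\bigoplus_iL_{d_i(F)+1}$ are isomorphic as $\ell$-algebras, hence have equal image under $j$; by Theorem \ref{intro:struct} that image is $j(L(E))$ and $j(L(F))$ respectively, which is $(i)$.

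I do not expect a serious obstacle here — the corollary is essentially a formal readout of Theorem \ref{intro:struct} — but two bookkeeping points deserve care: checking that the matrices in \eqref{intro:bf} and \eqref{intro:bf*} are honest transposes of one another, so that the rank cancellation producing $\#\sing(E)$ is valid; and verifying that enlarging the stabilization set from $E$ (or $F$) to $E\sqcup F$ disturbs neither the conclusion of Theorem \ref{intro:struct} nor the groups $KH_0$ and $KH^1$.
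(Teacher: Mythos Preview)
Your proof is correct and follows essentially the same route as the paper. The paper's argument is terse --- it cites the identity $\#\sing(E)=\rk KH_0(L(E))-\rk KH^1(L(E))$ and the fact that $\tors KH_0(L(E))\cong\tors KH^1(L(E))$ (both proved via Smith normal form in Lemma~\ref{lem:extk0}), then says the corollary is immediate from Theorem~\ref{intro:struct}; you have simply unpacked this into the explicit cycle $(i)\Rightarrow(ii)\Rightarrow(iii)\Rightarrow(i)$ and addressed the stabilization-set bookkeeping that the paper handles by convention.
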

\begin{proof} It is not hard to check, using \eqref{intro:bf} and \eqref{intro:bf*} (see Lemma \ref{lem:extk0}) that
the groups $KH_0(L(E))$ and $KH^1(L(E))$ have isomorphic torsion subgroups and that 
\begin{equation}\label{intro:sing}
\#\sing(E)=\rk KH_0(L(E))-\rk KH^1(L(E)).
\end{equation}
The corollary is immediate from this and the theorem above. 
\end{proof}

To put the above result in perspective, let us recall that E. Ruiz and M. Tomforde have shown in \cite{rt} that if $\ell$ is a field, $L(E)$ and $L(F)$ are simple and both $E$ and $F$ have infinite emitters, then condition iii) of the corollary holds if and only if $L(E)$ and $L(F)$ are Morita equivalent. 
Our result applies far more generally, but it is in principle weaker, since $kk$-isomorphic algebras need not be Morita equivalent. For example $L_2$ is not Morita equivalent to the $0$ ring. Observe also that the identity \eqref{intro:sing} helps us replace the graphic condition about $\#\sing$ by the purely $K$-theoretic or homological condition about $KH^1$. 

By \eqref{intro:bf*} and \cite{ck}*{Theorem 5.3}, when $E$ is finite and regular $KH^1(L(E))$ is isomorphic to the group of extensions of the  $C^*$-algebra of $E$ by the algebra of compact operators. We shall see presently that $KH^1(L(E))$ is also related to algebra extensions
\[
0\to M_\infty\to \cE\to L(E)\to 0.
\]
One can form an abelian monoid of homotopy classes of such extensions (see Section \ref{sec:homo}); we write $\cExt(L(E))$ for its group completion. When $E^0$ is finite and $E^1$ is countable, there is a natural map We show in Proposition \ref{prop:extkk} that, under the assumptions of Theorem \ref{intro:struct}, if in addition $E^1$ is countable and $E$ has no sources, then there is a natural surjection
\begin{equation}\label{intro:onto}
\cExt(L(E))\onto KH^1(L(E)).
\end{equation}
As another similarity with the operator algebra case, we prove (Corollary \ref{coro:uct}) that if $\ell$ and $E$ are as in Theorem \ref{intro:struct} and $R\in\aha$, then there is a short exact sequence
\begin{multline}\label{intro:uct}
0\to \ext^1_\Z(KH_0(L(E)), KH_{n+1}(R))\to kk_n(L(E), R)\overset{[KH_0,\gamma^*KH_1]}\lra\\
\Hom(KH_0(L(E)),KH_n(R))\oplus \Hom (\ker(I-A_E^t),KH_{n+1}(R))\to 0.
\end{multline}
Observe that, for operator algebraic $K$-theory, $K_1^{\mathrm{top}}(C^*(E))=\ker(I-A_E^t)$, so substituting $K^{\mathrm{top}}$ and $KK$ for $KH$ and $kk$ in \eqref{intro:uct} one obtains the usual UCT of \cite{roscho}*{Theorem 1.17}. Moreover, in Proposition \ref{prop:kun} we also prove an analogue of the K\"unneth theorem of \cite{roscho}*{Theorem 1.18}. 

Up to here in this introduction we have only discussed results that hold for $E$ with finitely many vertices and for $\ell$ such that $KH_0(\ell)=\Z$ and $KH_{-1}\ell=0$. With no hypothesis on $\ell$ we show that if $E$ and $F$ have finitely many vertices and $\theta\in kk(L(E),L(F))$ then
\begin{equation}\label{intro:kkiso}
\theta \text{ is an isomorphism }\iff KH_0(\theta) \text{ and } KH_1(\theta) \text{ are isomorphisms.}
\end{equation}
It is however not true that unital Leavitt path algebras with isomorphic $KH_0$ and $KH_1$ are $kk$-isomorphic, even when $\ell$ is a field (see Remark \ref{rem:ojokkiso}). Thus in view of Corollary \ref{intro:dt}, the pair $(KH_0,KH^1)$ is a better invariant of Leavitt path algebras than the pair $(KH_0,KH_1)$. 

Next let $\ell$ and $E$ be arbitrary and let $R\in\aha$. If $I$ is a set, write 

$$R^{(I)}=\bigoplus_{i\in I}R$$ 

for the algebra of finitely supported functions $I\to R$. Let $X:\aha\to\cT$ be an excisive, homotopy invariant, $E$-stable homology theory. Further assume that direct sums of at most $\# E^0$ summands exist in $\cT$ and that for any family of algebras $\{R_i: i\in I\}$ the natural map 
\[
\bigoplus_{i\in I}X(R_i)\to X(\bigoplus_{i\in I}R_i)
\]
is an isomorphism if $\#I\le \#E^0$. We prove in Theorem \ref{thm:basictrix} that there is a distinguished triangle in $\cT$ of the following form
\begin{equation}\label{intro:basictrix}
\xymatrix{X(R)^{(\reg(E))}\ar[r]^(0.55){I-A_E^t}&X(R)^{(E^0)}\ar[r]& X(L(E)\otimes R). }
\end{equation}
This applies, in particular, when we take $X=KH$, generalizing \cite{abc}*{Theorem 8.4}. Thus we get a long exact sequence
\begin{equation}\label{intro:khseq}
KH_{n+1}(L(E)\otimes R)^{(E^0)}\to KH_n(R)^{(\reg(E))}\overset{I-A_E^t}\lra KH_n(R)^{(E^0)}\to KH_n(L(E)\otimes R)
\end{equation}
When $R$ is regular supercoherent we may substitute $K$ for $KH$ in \eqref{intro:khseq}, generalizing \cite{abc}*{Theorem 7.6} (see Example \ref{ex:khtri}). 
Infinite direct sums are not known to exist in $kk$; however finite direct sums do exist, and $j$ does commute with them. Hence when $E^0$ is finite and $\ell$ is arbitrary, we may take $X=j$ above to obtain a distinguished triangle 
\begin{equation}\label{intro:basictri}
\xymatrix{j(R)^{\reg(E)}\ar[r]^(0.55){I-A_E^t}&j(R)^{E^0}\ar[r]& j(L(E)\otimes R).}
\end{equation}
This triangle is the basic tool we use to establish all the results on unital Leavitt path algebras mentioned above. 

\bigskip

The rest of this article is organized as follows. Some notations used throughout
the paper (in particular pertaining matrix algebras) are explained at the end of this
Introduction. In Section \ref{sec:homo} we recall some basic notions about algebraic homotopy, prove some elementary lemmas about it, and use them to define, for every pair of algebras $A$ and $R$ with $R$ unital, a group $\cExt(A,R)$ of virtual homotopy classes of extensions of $A$ by $M_\infty R$. In Section \ref{sec:kk} we recall some basic properties of $kk$ and quasi-homomorphisms.
Also, we prove in Proposition \ref{prop:sumaiotas} that if $\iota_i:A_i\to M_{S_i}A_i$ ($i\in I$) are corner inclusions, $S$ is an infinite set with $\# S\ge\#(\sqcup_iS_i)$ and $j:\aha\to kk$ is the universal excisive, homotopy invariant and $M_S$-stable homology theory, then the direct sum $\oplus_i\iota_i$ induces an isomorphism in $M_S$-stable $kk$, even if $I$-direct sums might not exist in $kk$.
Section \ref{sec:algcohn} is devoted to the characterization of the image under $j:\aha\to kk$ of the Cohn path algebra $C(E)$ of a graph $E$. The latter is related to Leavitt path algebra $L(E)$ by means of an exact sequence
\begin{equation}\label{intro:kcl}
0\to \cK(E)\to C(E)\to L(E)\to 0,
\end{equation}
where $\cK(E)$ is a direct sum of matrix algebras. The algebra $C(E)$ receives a canonical homomorphism $\phi:\ell^{(E^0)}\to C(E)$. We prove in Theorem \ref{thm:ftg} that the universal excisive, homotopy invariant, $E$-stable homology theory $j$ maps $\phi$ to an isomorphism
\begin{equation}\label{intro:phisoce}
j(\ell^{(E^0)})\cong j(C(E)).
\end{equation}
The proof uses quasi-homomorphisms, much in the spirit of Cuntz' proof of Bott periodicity for $C^*$-algebra $K$-theory. As a corollary we obtain that if $KH_0(\ell)=\Z$ and $E$ and $F$ are graphs, then for the universal excisive, homotopy invariant, $E\sqcup F$-stable homology theory $j$ we have
(Corollary \ref{coro:isoce}) 
\begin{equation}\label{intro:isoce}
j(C(E))\cong j(C(F))\iff K_0(C(E))\cong K_0(C(F))\iff \# E^0=\# F^0.
\end{equation} 
In Section \ref{sec:fundatri} we use Proposition \ref{prop:sumaiotas} to prove that 
$j(\ell^{(\reg(E))})\cong j(\cK(E))$. Putting this together with \eqref{intro:isoce} we get that, for arbitrary $\ell$ and $E$, the $kk$-triangle induced by \eqref{intro:kcl} is isomorphic to one of the form
\[
j(\ell^{(\reg(E))})\overset{f}{\to} j(\ell^{(E^0)})\to j(L(E)).
\]
We show in Proposition \ref{prop:basictri} that for each pair $(v,w)\in E^0\times \reg(E)$ the composite $\pi_vfi_w:\ell\to \ell $ induced by the inclusion at the $w$-summand and the projection onto the $v$-summand
is multiplication by the $(v,w)$-entry of $I-A_E^t$. We use this to prove \eqref{intro:basictrix} (Theorem \ref{thm:basictrix}). The exact sequence \eqref{intro:khseq}, the fact that $K$ can be substituted for $KH$ when $R$ is regular supercoherent, as well as triangle \eqref{intro:basictrix}, are deduced in Example \ref{ex:khtri}. The equivalence \eqref{intro:kkiso} is proved in Propostion \ref{prop:kkiso}. 
Beginning in Section \ref{sec:leinkk} we work under the Standing assumptions \ref{stan}, which are that $KH_{-1}\ell=0$ and $KH_0(\ell)=\Z$. The surjection \eqref{intro:onto} is established in Proposition \ref{prop:extkk}. The fact that
$KH_0(L(E))$ and $KH^1(L(E))$ have isomorphic torsion subgroups and the identity \eqref{intro:sing} are proved in Lemma \ref{lem:extk0}. Theorem \ref{intro:struct} and Corollary \ref{intro:dt} are Theorem \ref{thm:structure} and Corollary \ref{coro:kkk0ext}.
In Section \ref{sec:canofil} we introduce a descending filtration
$\{kk(L(E),R)^i:0\le i\le 2\}$ on $kk(L(E),R)$ for every algebra $R$ and every unital Leavitt path algebra $L(E)$ and compute the slices 
$kk(L(E),R)^i/kk(L(E),R)^{i+1}$ (Theorem \ref{thm:slice}). We use this to prove the universal coefficient theorem \eqref{intro:uct} in Corollary \ref{coro:uct} and the K\"unneth theorem in Proposition \ref{prop:kun}.

\begin{nota}\label{nota:intro} A commutative ground ring $\ell$ is fixed throughout the paper. All algebras, modules and tensor products are over $\ell$. If $A$ is an algebra and $X\subset A$ a subset, we write
$\mspan(X)$ and $\langle X\rangle$ for the $\ell$-submodule and the two-sided ideal generated by $X$. At the beginning of this Introduction we introduced, for a set $S$ and an algebra $A$, the algebra $M_SA$ of finitely supported $S\times S$-matrices. We write $M_S=M_S\ell$ and $\epsilon_{s,t}\in M_S$ for the matrix whose only nonzero entry is a $1$ at the $(s,t)$-spot $(s,t\in S)$. 
We also consider the algebra 
\[
\Gamma_S(R):=\{A:S\times S\to R\mid \#\supp A_{i,*},\#\supp A_{*,i}<\infty\} 
\] 
of those matrices which have finitely many nonzero coefficients in each row and column. If $\#S=n<\infty$, then $\Gamma_S=M_S=M_n$ is the usual matrix algebra.  We use special notation for the case $S=\N$; we write $M_\infty$ for $M_\N$ and $\Gamma$ for $\Gamma_\N$. Observe that $M_\infty R$ is an ideal of $\Gamma(R)$. Put
\begin{equation}\label{def:wagsus}
\Sigma(R)=\Gamma(R)/M_\infty R.
\end{equation}
The algebras $\Gamma(R)$ and $\Sigma(R)$ are Wagoner's \emph{cone} and \emph{suspension} algebras \cite{wagoner}. A \emph{$*$-algebra} is an algebra $R$ equipped with an involutive algebra homomorphism $R\to R^{op}$. For example $\ell$ is a $*$-algebra with trivial involution. If $R$ is a $*$-algebra, the conjugate matricial transpose makes both $\Gamma_S(R)$ and $M_S R$ into $*$-algebras. 
\end{nota}

\section{Homotopy and extensions}\label{sec:homo}

Let $\ell$ be a commutative ring. Let $\aha$ be the category of associative, not necessarily unital algebras over $\ell$.
If $B\in\aha$, we write $\ev_i:B[t]\to B$, $\ev_i(f)=f(i)$, $i=0,1$ for the evaluation map. Let $\phi_0,\phi_1:A\to B$ be two algebra homomorphisms; an \emph{elementary homotopy} from $\phi_0$ to $\phi_1$ is an algebra homomorphism $H:A\to B[t]$ such that
$\ev_0H=\phi_0$ and $\ev_1H=\phi_1$. We say that two algebra homomorphisms $\phi,\psi:A\to B$ are \emph{homotopic}, and write $\phi\simh\psi$, if for some $n\ge 1$ there is a finite sequence $\phi=\phi_0,\dots,\phi_n=\psi$ such that for each $0\le i\le n-1$ there is an elementary homotopy from $\phi_i$ to $\phi_{i+1}$. We write
\[
[A,B]=\hom_{\aha}(A,B)/\simh
\] 
for the set of homotopy classes of homomorphisms $A\to B$. 

\begin{lem}\label{lem:iotas}
Let $A$ be a ring. Then the maps $\iota_2,\iota_2':A\to M_2A$, $\iota_2(a)=\epsilon_{1,1}\otimes a$, $\iota_2'(a)=\epsilon_{2,2}\otimes a$ are homotopic. 
\end{lem}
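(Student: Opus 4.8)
The plan is to produce an explicit elementary homotopy $H\colon A\to M_2A[t]$ interpolating between $\iota_2$ and $\iota_2'$, using a rotation-type matrix. First I would recall the standard trick: in $M_2(\ell[t])$ consider the matrix
\[
u(t)=\begin{pmatrix} 1-t^2 & t\sqrt{2-t^2}\\ -t\sqrt{2-t^2} & 1-t^2\end{pmatrix}
\]
--- except that square roots are not available over a general commutative ring, so instead one uses a homotopy of \emph{algebra} homomorphisms rather than conjugation by a path of units. The cleaner route, which avoids units entirely, is to observe that $\iota_2$ and $\iota_2'$ both factor through $M_2A$ and to build the homotopy directly on generators. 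Concretely, define $H\colon A\to M_2A[t]$ by
\[
H(a)=\begin{pmatrix}(1-t)^2 a & t(1-t)a\\ t(1-t)a & t^2 a\end{pmatrix},
\]
and check that $H$ is multiplicative: writing $p(t)=\begin{pmatrix}(1-t)^2 & t(1-t)\\ t(1-t)& t^2\end{pmatrix}\in M_2(\ell[t])$, one has $p(t)=q(t)q(t)^{t}$ where $q(t)=\begin{pmatrix}1-t\\ t\end{pmatrix}$, and since $q(t)^{t}q(t)=(1-t)^2+t^2$ is a scalar, $p(t)$ is idempotent-like only up to that scalar; so $H(a)H(b)$ picks up a factor $((1-t)^2+t^2)ab$, which is \emph{not} equal to $H(ab)$. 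So this naive formula fails, and one must be more careful.

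The correct and standard argument is the following two-step homotopy, which is why the lemma statement allows a \emph{finite sequence} of elementary homotopies. Step one: show $\iota_2\simh \iota_2 \oplus 0 = \iota_2$ conjugated by the permutation-looking path is not directly available, so instead use the well-known fact that for the inclusion $j\colon A\to M_2A$ sending $a\mapsto \diag(a,0)$, the map $a\mapsto \diag(a,a)$ composed with a suitable corner is homotopic to it — but the genuinely robust tool here is: the homomorphisms $a\mapsto \diag(a,0)$ and $a\mapsto \diag(0,a)$ into $M_2 A$ become conjugate by the permutation matrix $w=\epsilon_{1,2}+\epsilon_{2,1}$ \emph{after} passing to $M_4 A$ (or one more stabilization), and conjugation by a permutation matrix that lies in the connected component of the identity (which $w\oplus w^{-1}=w\oplus w$ does, inside $M_4$, via the rotation path) induces the identity up to homotopy. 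So the plan is: (i) note $\iota_2'=\ad_w\circ\iota_2$ where $w=\begin{pmatrix}0&1\\1&0\end{pmatrix}$, i.e. $\iota_2'(a)=w\,\iota_2(a)\,w^{-1}$; (ii) the matrix $w$ is not in the identity component of $\Gl_2(\ell)$ in general, but $w\oplus w \in \Gl_4(\ell)$ \emph{is} connected to the identity by the polynomial path of rotations
\[
r(t)=\begin{pmatrix}\cos&-\sin\\ \sin&\cos\end{pmatrix}\text{-type blocks}
\]
built without trigonometry from $\begin{pmatrix}1-t & -t\\ t & 1-t\end{pmatrix}$ concatenated appropriately — more precisely one uses the classical observation (Wagoner, or Cuntz) that $\begin{pmatrix}w&0\\0&w^{-1}\end{pmatrix}$ equals a product of elementary matrices over $\ell[t]$ evaluated at the endpoints, hence conjugation by it is elementarily homotopic to the identity through algebra homomorphisms $A\to M_4 A[t]$.

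Carrying this out: the key identity is that in $\Gl_2(\ell[t])$ we have the elementary-matrix factorization
\[
\begin{pmatrix}0&1\\-1&0\end{pmatrix}=\begin{pmatrix}1&1\\0&1\end{pmatrix}\begin{pmatrix}1&0\\-1&1\end{pmatrix}\begin{pmatrix}1&1\\0&1\end{pmatrix},
\]
and each elementary matrix $e_{ij}(\lambda)$ is connected to the identity by the path $e_{ij}(t\lambda)$, $t\in[0,1]$; conjugation by such a path gives an elementary homotopy of homomorphisms $A\to M_2A[t]$ from $\ad_{e_{ij}(\lambda)}$ to the identity. Composing, conjugation by $\begin{pmatrix}0&1\\-1&0\end{pmatrix}$ is homotopic (through a finite chain of elementary homotopies) to the identity on $M_2 A$. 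Since $\ad_{\left(\begin{smallmatrix}0&1\\-1&0\end{smallmatrix}\right)}=\ad_{\left(\begin{smallmatrix}0&1\\1&0\end{smallmatrix}\right)}$ (they differ by a central sign that cancels in the conjugation), we get $\iota_2'=\ad_w\circ\iota_2\simh \id_{M_2A}\circ\,\iota_2=\iota_2$, as desired. The main obstacle is purely the bookkeeping: writing down the elementary-matrix path explicitly and verifying at each stage that conjugation by a path of units in $M_2(\ell[t])$ really does define an algebra homomorphism $A\to M_2 A[t]$ restricting correctly at $t=0,1$ — this is routine but must be done over a general commutative $\ell$, so no analytic shortcuts (square roots, exponentials) are permitted and one leans entirely on the polynomial elementary-matrix identity above.
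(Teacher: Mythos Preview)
Your final approach---factoring the rotation $\left(\begin{smallmatrix}0&1\\-1&0\end{smallmatrix}\right)$ as a product of three elementary matrices and using that each $e_{ij}(\lambda)$ is joined to the identity by the polynomial path $e_{ij}(t\lambda)$---is correct and is in fact the same mechanism the paper uses, only the paper packages it more tightly. The paper works in the unitalization $R=\tilde{A}$ and simply writes down the single matrix
\[
U(t)=\begin{pmatrix}1-t^2 & t^3-2t\\ t & 1-t^2\end{pmatrix}\in\GL_2 R[t],
\]
checks that $\det U(t)=1$, and sets $H=\ad(U(t))\circ\iota_2$, obtaining a \emph{single} elementary homotopy from $\iota_2$ to $\iota_2'$. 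If you multiply out your three elementary paths you get exactly this $U(t)$: indeed $e_{12}(-t)\,e_{21}(t)\,e_{12}(-t)=U(t)$. So your chain of three elementary homotopies collapses to the paper's one; the paper's version is shorter, while yours has the merit of explaining where the otherwise mysterious matrix $U(t)$ comes from.

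Two minor cleanups for your write-up: drop the false starts (the rank-one idempotent $p(t)$ and the $M_4$-stabilization detour), since they play no role; and make explicit, as the paper does, that your conjugating matrices live in $M_2(\tilde{A}[t])$ (or $M_2(\ell[t])$) and that conjugation preserves the ideal $M_2A\subset M_2\tilde{A}$, so that the homotopy really lands in $M_2A[t]$.
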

\begin{proof}
Let $R=\tilde{A}$ be the unitalization. Consider the element 
\[
U(t)=\left[\begin{matrix}(1-t^2)& (t^3-2t)\\ t&(1-t^2)\end{matrix}\right]\in\GL_{2}R[t].
\]
Let $\ad(U(t)):R[t]\to R[t]$ be the conjugation map. Then $H=\ad(U(t))\iota_2:A\to M_2A[t]$, satisfies $\ev_0H=\iota_2$, $\ev_1H=\iota_2'$.  
\end{proof}

Let $A$ and $R$ be algebras, $\phi,\psi\in\hom_{\aha}(A,R)$ and $\iota_2:R\to M_2 R$, as in Lemma \ref{lem:iotas}. We say that $\phi$ and $\psi$ are \emph{$M_2$-homotopic}, and write $\phi\simh_{M_2}\psi$, if $\iota_2\phi\simh \iota_2\phi$. Put
\[
[A,R]_{M_2}=\hom_{\aha}(A,R)/\simh_{M_2}.
\]
Let $C$ be an algebra, $A,B\subset C$ subalgebras and $\inc_A:A\to C$, $\inc_B:B\to C$ the inclusion maps. Let $x,y\in C$ such that $yAx\subset B$ and $axya'=aa'$ for all $a,a'\in A$. Then 
\begin{equation}\label{map:adyx}
\ad(y,x):A\to B,\quad \ad(y,x)(a)=yax
\end{equation}
is a homomorphism of algebras, and we have the following.

\begin{lem}\label{lem:conju1}
Let $A, B, C$ and $x,y$ be as above. Then $\inc_B\ad(y,x)\simh_{M_2}\inc_A$. If moreover $A=B$ and $yA,Ax\subset A$, then $\ad(y,x)\simh_{M_2}\id_A$. 
\end{lem}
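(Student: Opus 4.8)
The plan is to construct one explicit elementary homotopy that realizes $\ad(y,x)$ up to interchanging the two diagonal corners of $M_2$, and then to use Lemma \ref{lem:iotas} to undo that interchange. (Since $\ad(y,x)$ is already known to be a homomorphism, nothing has to be checked about that.)

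The key step I would try is the following candidate for a homotopy. Look for scalar polynomials $\alpha,\beta,\gamma,\delta\in\ell[t]$ and set
\[
G\colon A\to M_2(C[t]),\qquad G(a)=\begin{pmatrix}\alpha\,a & \beta\,ax\\[2pt]\gamma\,ya & \delta\,yax\end{pmatrix}.
\]
All entries lie in $C[t]$ and $G$ is $\ell$-linear in $a$ by construction, so the only issue is multiplicativity. Expanding $G(a)G(a')$ and simplifying each of the four entries with the sandwich relation $axya'=aa'$ (this is precisely what turns, e.g., $\beta\gamma\,(axya')$ into $\beta\gamma\,aa'$ and $\delta^2\,y(axya')x$ into $\delta^2\,yaa'x$), one finds that $G$ is a homomorphism exactly when $\alpha+\delta=1$ and $\beta\gamma=\alpha\delta$ --- equivalently, when $t\mapsto\left(\begin{smallmatrix}\alpha&\beta\\\gamma&\delta\end{smallmatrix}\right)$ is a path of idempotents in $M_2(\ell[t])$. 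So the problem reduces to choosing such a path joining $\epsilon_{1,1}$ at $t=0$ to $\epsilon_{2,2}$ at $t=1$, with the extra requirement that $\beta$ and $\gamma$ vanish at both endpoints (so that $\ev_0G=\iota_2\inc_A$ and $\ev_1G=\iota_2'\inc_B\ad(y,x)$). The cubic choice $\alpha=(1-t)^2(1+2t)$, $\delta=t^2(3-2t)$, $\beta=t(1-t)(1+2t)$, $\gamma=t(1-t)(3-2t)$ does this.

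Given such a $G$ we obtain $\iota_2\inc_A\simh\iota_2'\inc_B\ad(y,x)$. Then I would invoke Lemma \ref{lem:iotas} for the ring $C$ and precompose the homotopy $\iota_2\simh\iota_2'\colon C\to M_2C$ it provides with $\inc_B\ad(y,x)\colon A\to C$, getting $\iota_2'\inc_B\ad(y,x)\simh\iota_2\inc_B\ad(y,x)$; concatenating the two homotopies yields $\inc_B\ad(y,x)\simh_{M_2}\inc_A$, the first assertion. For the refinement, when $A=B$ and $yA,Ax\subseteq A$, every entry of $G(a)$ now lies in $A[t]$, so the same $G$ is an elementary homotopy $A\to M_2(A[t])$ from $\iota_2\id_A$ to $\iota_2'\ad(y,x)$; combining with Lemma \ref{lem:iotas} for $A$ gives $\ad(y,x)\simh_{M_2}\id_A$.

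The one non-formal step is the second paragraph: guessing the shape of $G$ and verifying that the four entry-wise identities really do close up under $axya'=aa'$, while simultaneously writing down an honest polynomial idempotent path in $M_2(\ell[t])$ with the prescribed corner limits and with off-diagonal entries vanishing at $t=0,1$. Everything after that is routine bookkeeping with Lemma \ref{lem:iotas}.
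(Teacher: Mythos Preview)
Your argument is correct. The reduction of multiplicativity of $G$ to the idempotent conditions $\alpha+\delta=1$, $\beta\gamma=\alpha\delta$ is accurate, your cubic path meets all the endpoint constraints, and the bookkeeping with Lemma~\ref{lem:iotas} is fine in both parts.

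The paper reaches the same conclusion by a shorter, less computational route. Instead of constructing a new homotopy, it lifts $\ad(y,x)$ to the $2\times 2$ level: setting $\bar y=\diag(y,1)$, $\bar x=\diag(x,1)\in M_2\tilde C$, one has $a\bar x\bar y a'=aa'$ for all $a,a'\in M_2A$, so $\phi=\ad(\bar y,\bar x):M_2A\to M_2C$ is a homomorphism satisfying $\phi\iota_2=\iota_2\inc_B\ad(y,x)$ and $\phi\iota_2'=\iota_2'\inc_A$. Post-composing the rotation homotopy of Lemma~\ref{lem:iotas} (for $A$) with $\phi$ gives $\iota_2\inc_B\ad(y,x)\simh\iota_2'\inc_A$ directly, and one more use of Lemma~\ref{lem:iotas} finishes. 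Your map is in fact $G(a)=\diag(1,y)\,P(t)a\,\diag(1,x)$ with $P(t)$ your idempotent path, which is the mirror image of the paper's $\diag(y,1)\,U(t)\epsilon_{1,1}aU(t)^{-1}\,\diag(x,1)$; so the two proofs are close cousins. What your approach buys is an explicit, self-contained idempotent path (and it makes transparent why the homotopy exists without invoking an invertible lift); what the paper's approach buys is that no new polynomials need to be produced or checked---everything is recycled from Lemma~\ref{lem:iotas}.
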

\begin{proof} Consider the diagonal matrices $\bar{y}=\diag(y,1), \bar{x}=\diag(x,1)\in M_2\tilde{C}$. One checks that $a\bar{x}\bar{y}a'=aa'$ for all $a,a'\in M_2A$. 
Hence $\phi:=\ad(\bar{y},\bar{x}):M_2A\to M_2C$ is a homomorphism. Moreover we have $\phi\iota_2=\iota_2\inc_B\ad(y,x)$ and $\phi\iota'_2=\iota'_2\inc_A$.
Thus applying Lemma \ref{lem:iotas} twice, we get
\[
\iota_2\inc_B\ad(y,x)\simh\iota'_2\inc_A\simh\iota_2\inc_A.
\]
This proves the first assertion. Under the hypothesis of the second assertion, $\phi$ maps $M_2A\to M_2A$, and we have $\phi\iota_2=\iota_2\ad(y,x)$ and
$\phi\iota'_2=\iota'_2$. The proof is immediate from this using Lemma \ref{lem:iotas}.
\end{proof}
 
A \emph{$C_2$-algebra} is a unital algebra $R$ together with a unital algebra homomorphism from the Cohn algebra $C_2$ to $R$. 
Equivalently, $R$ is a unital algebra together with elements $x_1,x_2,y_1,y_2\in R$ satisfying $y_ix_j=\delta_{i,j}$. 

If $R$ is a $C_2$-algebra  the map
\begin{equation}\label{map:boxplus}
\boxplus:R\oplus R\to R,\quad a\boxplus b=x_1ay_1+x_2ay_2
\end{equation}
is an algebra homomorphism. An \emph{infinite $C_2$-algebra} is a $C_2$-algebra together with an endomorphism $\phi:R\to R$ such that for all $a\in R$ we have 
\[
a\boxplus\phi(a)=\phi(a).
\]

\goodbreak

In the following lemma and elsewhere, if $M$ is an abelian monoid, we write $M^+$ for the group completion.

\begin{lem}\label{lem:boxplus}
Let $A$ be an algebra, $R=(R,x_1,x_2,y_1,y_2)$ a $C_2$-algebra, and $B\triqui R$ an ideal. Then \eqref{map:boxplus} induces an operation in $[A,B]_{M_2}$ which makes it into an abelian monoid whose neutral element is the zero homomorphism. If furthermore $R$ is an infinite $C_2$-algebra, then $[A,R]_{M_2}^+=0$. 
\end{lem}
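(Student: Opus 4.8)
The plan is to verify the monoid axioms for the operation induced by $\boxplus$ on $[A,B]_{M_2}$, and then, in the infinite case, to produce a homotopy-inverse for every class using an Eilenberg-swindle argument. First I would check that $\boxplus$ descends to $[A,B]_{M_2}$: given $\phi,\psi\colon A\to B$, the formula $(\phi\boxplus\psi)(a)=x_1\phi(a)y_1+x_2\psi(a)y_2$ lands in $B$ since $B$ is an ideal of $R$ and is an algebra homomorphism because $y_ix_j=\delta_{i,j}$ forces the cross terms to vanish; and if $\phi\simh_{M_2}\phi'$, $\psi\simh_{M_2}\psi'$ then conjugating an elementary homotopy by $\diag(x_1,x_2)$, $\diag(y_1,y_2)$ shows $\phi\boxplus\psi\simh_{M_2}\phi'\boxplus\psi'$. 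For associativity and commutativity I would use Lemma \ref{lem:conju1}: the two triple composites, and the two orders $\phi\boxplus\psi$ versus $\psi\boxplus\phi$, differ by $\ad(y,x)$ for a suitable pair $(y,x)$ inside $R$ (built from products of the $x_i,y_i$), hence are $M_2$-homotopic. That the zero homomorphism is neutral follows because $\phi\boxplus 0$ equals $\ad(x_1,y_1)\circ\phi$ composed with the inclusion $B\hookrightarrow R$, which by the first assertion of Lemma \ref{lem:conju1} is $M_2$-homotopic to $\phi$ itself (one must check $\phi(a)y_1x_1\phi(a')=\phi(a)\phi(a')$, which holds since $y_1x_1=1$).

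Next, for the infinite case, I would exploit the endomorphism $\phi\colon R\to R$ with $a\boxplus\phi(a)=\phi(a)$ for all $a\in R$. Given a class $[\alpha]\in[A,R]_{M_2}$, consider $\phi\circ\alpha\colon A\to R$; from the defining relation of an infinite $C_2$-algebra, applied elementwise to $a=\alpha(x)$, we get $\alpha\boxplus(\phi\alpha)=\phi\alpha$, i.e. in the monoid $[A,R]_{M_2}$ we have $[\alpha]+[\phi\alpha]=[\phi\alpha]$. Since $[A,R]_{M_2}$ is a commutative monoid, adding $-[\phi\alpha]$ in the group completion yields $[\alpha]=0$ in $[A,R]_{M_2}^+$. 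As $[\alpha]$ was arbitrary and these classes generate the group completion, $[A,R]_{M_2}^+=0$.

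The main obstacle I expect is bookkeeping in the first part: to apply Lemma \ref{lem:conju1} one must exhibit, in each case (associativity, commutativity, neutrality), an explicit pair $x,y$ in $C:=R$ (or in $M_2 R$) together with a subalgebra containing the relevant images, and verify the hypotheses $yAx\subset B$ and $axya'=aa'$. These pairs are the standard ones from the theory of $C_2$-algebras — e.g. for commutativity one uses $u=x_1y_2+x_2y_1$ which satisfies $u^2=1$ and conjugates $a\boxplus b$ to $b\boxplus a$ — but confirming the inclusion and idempotency relations, and checking that everything stays inside $B$ when $B$ is merely an ideal rather than all of $R$, requires care. Everything else is formal: the swindle in the second part is immediate once the monoid structure is in place.
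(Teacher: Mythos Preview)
Your plan is the paper's plan: reduce the monoid axioms to Lemma~\ref{lem:conju1} applied with conjugating pairs built from the $x_i,y_i$, and deduce the infinite case from the swindle identity $[\alpha]+[\phi\alpha]=[\phi\alpha]$. The paper's proof is terser but proceeds identically.

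One correction in your obstacle paragraph: the element $u=x_1y_2+x_2y_1$ does \emph{not} satisfy $u^2=1$. A direct computation gives $u^2=x_1y_1+x_2y_2$, and in a $C_2$-algebra this need not be $1$ (that is the extra Cuntz--Leavitt relation, absent here). What rescues the argument is that Lemma~\ref{lem:conju1} does not require $xy=1$ globally, only that $axya'=aa'$ for $a,a'$ in the subalgebra ``$A$'' of its statement. If you take that subalgebra to be the image of $\phi\boxplus\psi$ (inside $C=B$), then for $a=x_1by_1+x_2b'y_2$ one checks $a(x_1y_1+x_2y_2)=a$, so $au^2a'=aa'$ holds and the first assertion of Lemma~\ref{lem:conju1} applies. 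The same care is needed for associativity. For the neutral element there is no issue, since the relevant product is $y_1x_1=1$; note though that it is really the \emph{second} assertion of Lemma~\ref{lem:conju1} (with $A=B$, $y=x_1$, $x=y_1$) that gives $\ad(x_1,y_1)\simh_{M_2}\id_B$ as a self-map of $B$, which is what you need.
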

\begin{proof}
By Lemma \ref{lem:conju1}, the homomorphisms $B\to B$, $b\mapsto x_iby_i$ ($i=0,1$) are $M_2$-homotopic to the identity. Hence to prove the first assertion, it suffices to show that \eqref{map:boxplus}  associative and commutative up to $M_2$-homotopy. This is straightforward from Lemma \ref{lem:conju1}, since all diagrams involved commute up to a map of the form \eqref{map:adyx}. The second assertion is clear. 
\end{proof}

\begin{ex}\label{ex:boxpis}
Any purely infinite simple unital algebra is a $C_2$-algebra, by \cite{agp}*{Proposition 1.5}. 
\end{ex}

\begin{ex}\label{ex:wagsus} If $R$ is a unital algebra, its cone $\Gamma(R)$ is an infinite $C_2$-algebra (\cite{wagoner}) and $\Sigma(R)$ is a $C_2$-algebra. For every algebra $R$, $\Gamma(R)\triqui\Gamma(\tilde{R})$ and $\Sigma(R)\triqui\Sigma(\tilde{R})$. By definition, we have an exact sequence 
\begin{equation}\label{seq:wag}
0\to M_\infty R\to \Gamma(R)\to \Sigma(R)\to 0.
\end{equation}   
\end{ex}

\begin{lem}\label{lem:wagoner}
Let $R$ be a unital algebra and let $\cE$ be an algebra containing $M_\infty R$ as an ideal. Then there exists a unique algebra homomorphism $\psi=\psi_\cE:\cE\to \Gamma(R)$
which restricts to the identity on $M_\infty R$.  
\end{lem}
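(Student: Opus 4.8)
The statement is that for a unital algebra $R$ and any algebra $\cE$ containing $M_\infty R$ as an ideal, there is a unique algebra homomorphism $\psi_\cE:\cE\to\Gamma(R)$ restricting to the identity on $M_\infty R$. The key structural fact is that $M_\infty R$ contains the ``matrix units'' $\epsilon_{i,j}\otimes 1$ together with $\epsilon_{i,j}\otimes r$ for $r\in R$, and that $\Gamma(R)$ is exactly the algebra of (row- and column-finite) ways of acting on the resulting ``module.'' Concretely, $\Gamma(R)$ is the multiplier-type algebra picking out those operators on $M_\infty R$ that, when multiplied into $M_\infty R$, land back inside it in a row/column finite way. So the strategy is: first produce the map by dualizing the idempotents $\epsilon_{i,i}\otimes 1\in M_\infty R$, then check it is a homomorphism, then check uniqueness.

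\emph{Construction.} Write $e_i=\epsilon_{i,i}\otimes 1\in M_\infty R$; these are orthogonal idempotents with $\sum e_i$ acting as a local unit on $M_\infty R$ (every element of $M_\infty R$ is fixed by multiplication by $e_1+\dots+e_n$ for $n$ large). Given $x\in\cE$, the products $e_i x e_j$ lie in $M_\infty R$ (since $M_\infty R$ is an ideal and $e_i\in M_\infty R$), in fact in $e_i(M_\infty R)e_j=\ell\cdot\epsilon_{i,j}\otimes R\cong R$; so we may write $e_i x e_j=\epsilon_{i,j}\otimes a_{ij}(x)$ for a unique $a_{ij}(x)\in R$. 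Define $\psi_\cE(x)\in\prod_{i,j}R$ to be the matrix with $(i,j)$ entry $a_{ij}(x)$. First I would check that $\psi_\cE(x)$ is row- and column-finite, hence genuinely lands in $\Gamma(R)$: for fixed $i$, consider that $x e_j$, summed over $j$ in a finite set, recovers... here one uses that $e_i x\in M_\infty R$ is itself a finitely supported matrix, so $e_i x e_j=0$ for all but finitely many $j$, giving row-finiteness, and symmetrically $e_i x e_j = 0$ for all but finitely many $i$ for fixed $j$, giving column-finiteness.

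\emph{Homomorphism property.} That $\psi_\cE$ is additive and $\ell$-linear is immediate. For multiplicativity I would compute, for $x,y\in\cE$, the entry $e_i(xy)e_j=\sum_k e_i x e_k y e_j$; this sum is finite by the finiteness just established, and matching it against the matrix-product formula for $\Gamma(R)$ (whose multiplication is the usual row-by-column product, well-defined precisely because of the row/column finiteness) shows $\psi_\cE(xy)=\psi_\cE(x)\psi_\cE(y)$. The key point making the cross terms $e_i x e_k y e_j$ combine correctly is the relation $(\epsilon_{i,k}\otimes a)(\epsilon_{k,j}\otimes b)=\epsilon_{i,j}\otimes ab$ inside $M_\infty R$, inherited by $\cE$. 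That $\psi_\cE$ restricts to the identity on $M_\infty R$ is a direct check: for $m\in M_\infty R$, $e_i m e_j$ is exactly the $(i,j)$ block of $m$.

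\emph{Uniqueness and the main obstacle.} For uniqueness, suppose $\psi,\psi'$ both restrict to the identity on $M_\infty R$. Then for any $x\in\cE$ and all $i,j$, $e_i\psi(x)e_j=\psi(e_i x e_j)=\psi'(e_i x e_j)=e_i\psi'(x)e_j$ in $\Gamma(R)$ (using that $\psi,\psi'$ are homomorphisms and agree on $M_\infty R\ni e_i,e_j,e_ixe_j$), and an element of $\Gamma(R)$ is determined by its entries $e_i(-)e_j$; hence $\psi(x)=\psi'(x)$. The step I expect to require the most care is verifying that $\psi_\cE$ is well-defined into $\Gamma(R)$ — i.e. the row- and column-finiteness — since this is where the specific structure of $M_\infty R$ (finitely supported matrices) as opposed to a larger matrix algebra is essential; everything else is formal manipulation with the matrix units. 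A secondary subtlety is checking that $\Gamma(R)$'s own multiplication (infinite sums $\sum_k A_{ik}B_{kj}$) is the one making $\psi_\cE$ multiplicative, which is fine because each such sum has only finitely many nonzero terms by row/column finiteness of the factors.
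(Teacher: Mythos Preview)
Your proposal is correct and follows exactly the approach the paper takes: define $\psi(x)_{i,j}$ by the identity $(\epsilon_{i,i}\otimes 1)\,x\,(\epsilon_{j,j}\otimes 1)=\epsilon_{i,j}\otimes \psi(x)_{i,j}$ and verify the required properties. The paper's proof is a two-line sketch that stops at ``one checks,'' whereas you have actually carried out the checks (row/column finiteness via $e_ix\in M_\infty R$, multiplicativity via $e_ix=\sum_k e_ixe_k$, and uniqueness by reading off entries); nothing further is needed.
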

\begin{proof} If $a\in \cE$ then for each $i,j\in\N$ there is a unique element $a_{i,j}\in R$ such that $(\epsilon_{i,i}\otimes 1)a(\epsilon_{j,j}\otimes 1)=\epsilon_{i,j}\otimes a_{i,j}$. One checks that $\psi:\cE\to \Gamma(R)$, $\psi(a)=(a_{i,j})$ satisfies the requirements of the lemma. 
\end{proof}

It follows from Lemma \ref{lem:wagoner} that if $R$ is unital then every exact sequence of algebras
\begin{equation}\label{seq:next}
0\to M_\infty R\to \cE\to A\to 0
\end{equation}
induces a homomorphism $\psi: A\to \Sigma(R)$ and that \eqref{seq:next} is isomorphic to the pullback along $\psi$ of \eqref{seq:wag}. Hence we may regard 
$[A,\Sigma(R)]_{M_2}$ as the abelian monoid of homotopy classes of all sequences \eqref{seq:next}. Put 
\begin{equation}\label{def:ext}
\cExt(A,R)=[A,\Sigma(R)]_{M_2}^+, \quad \cExt(A)=\cExt(A,\ell). 
\end{equation}
Observe that, by Lemma \ref{lem:boxplus}, any sequence \eqref{seq:next} which is split by an algebra homomorphism $A\to\cE$ maps to zero in $\cExt(A,R)$.

\section{Algebraic bivariant \emph{K}-theory}\label{sec:kk}

Let $\cT$ be a triangulated category and $\Omega$ the inverse suspension functor of $\cT$. A \emph{homology theory} with values in $\cT$ is a functor $X:\aha\to\cT$. An \emph{extension} of algebras is a short exact sequence of algebra homomorphisms
\begin{equation}\label{extensionE}
(E): 0\to A\to B\to C\to 0
\end{equation}
which is $\ell$-linearly split. We write $\cE$ for the class of all extensions. An \emph{excisive homology theory}
for $\ell$-algebras with values in $\cT$ consists of a functor
$X:\aha\to \cT$, together with a collection
$\{\partial_E:E\in\cE\}$ of maps $\partial_E^X=\partial_E\in\hom_{\cT}(\Omega
X(C), X(A))$ satisfying the compatibility conditions of \cite{kkwt}*{Section 6.6}. Observe that if $X:\aha\to\cT$ is excisive and $A,B\in\aha$, then the canonical map $X(A)\oplus X(B)\to X(A\oplus B)$ is an isomorphism. 
Let $I$ be a set. We say that a homology theory $X:\aha\to\cT$ is \emph{$I$-additive} if first of all direct sums of cardinality $\le \#I$ exist in $\cT$ and second of all the map
\[
\bigoplus_{j\in J}X(A_j)\to X(\bigoplus_{j\in J}A_j)
\]
is an isomorphism for any family of algebras $\{A_j:j\in J\}\subset\aha$ with $\#J\le\# I$. 

We say that the functor $X:\aha\to\cT$ is {\it homotopy invariant} if for every $A\in\aha$, $X$ maps the inclusion
$A\subset A[t]$ to an isomorphism. 

Let $S$ be a set, $s\in S$ and let 
\begin{equation}\label{map:iotas}
\iota_s:A\to M_SA,\quad \iota_s(a)=\epsilon_{s,s}\otimes a \qquad (A\in\aha). 
\end{equation}

Call $X$ {\it $M_S$-stable} if for every $A\in\aha$,
it maps $\iota_s:A\to M_S A$ to an isomorphism. This definition is independent of the element $s\in S$, by the argument of \cite{www}*{Lemma 2.2.4}. One can further show, using \cite{www}*{Proposition 2.2.6} and \cite{emathesis}*{Example 5.2.6} that if $S$ is infinite and $X$ is $M_S$-stable, and $T$ is a set such that $\#T\le\# S$, then $X$ is $T$-stable. 

\begin{defi}
Let $A,B\in\aha$. A  \emph{quasi-homomorphism} from $A$ to $B$ is a pair of homomorphisms
$\phi, \psi: A \to D\in\aha$, where $D$ contains $B$ as an ideal, such that
\[
\phi(a)-\psi(a)\in B\qquad (a\in A).
\]
We use the notation
\[
(\phi,\psi) : A \to D \rhd B.
\]

\end{defi}

Two algebra homomorphisms $\phi, \psi : A \to B$ are said to be {\it orthogonal}, in symbols $\phi\perp\psi$, if $\phi(x) \psi(y) = 0 = \psi(x) \phi(y)$ ($x,y\in A$). In this case, we will write $\phi   \perp \psi$. If $\phi   \perp \psi$ then $\phi   + \psi$ is an algebra homomorphism.  

\begin{prop} \label{prop:q-h} (\cite{crr}*{Proposition 3.3}) Let $X : \aha \to \tau $ be an excisive homology theory and let $(\phi,\psi): A \to D \rhd B$ be a quasi-homomorphism. Then, there is an induced map
\[ X(\phi,\psi): X(A) \to X(B) \] which satisfies the following naturality conditions:
\begin{enumerate}
\item $X(\phi,0)=X(\phi)$.
\item $ X(\phi,\psi) = - X(\psi,\phi)$.
\item If $(\phi_1, \psi_1)$ and $(\phi_2, \psi_2)$ are quasi-homomorphisms $A\to D\rhd B$ with $\phi_1   \perp \phi_2$ and $\psi_1   \perp \psi_2$, then $(\phi_1 + \phi_2, \psi_1 + \psi_2)$ is a quasi-homomorphism and 
$$ X(\phi_1 + \phi_2, \psi_1 + \psi_2) = X(\phi_1 , \psi_1 ) + X( \phi_2,  \psi_2). $$
\item $X(\phi, \phi) = 0$.
\item If $\alpha: C \to A$ is an $\ell$-algebra homomorphism, then
\[ X(\phi  \alpha,\psi  \alpha) = X(\phi,\psi)  X(\alpha). \]
\item If $\eta: D \to D'$ is an $\ell$-algebra homomorphism which maps $B$ into an
ideal $B' \lhd D'$, then
\[ X(\eta  \phi,\eta  \psi) = X(\eta|_B)  X(\phi,\psi).\]
\item Let $H =(H^+,H^-) : A \to D[t] \rhd B[t]$ with $ev_0 \circ H = (\phi^+,\phi^-)$ and  $ev_1 \circ H = (\psi^+,\psi^-)$. If, in addition, $X$ is homotopy invariant then 
$$ X(\phi^+,\phi^-)= X(\psi^+,\psi^-). $$
\item Let $(\psi, \varrho)$ be another quasi-homomorphism $A\to D\rhd B$. Then $( \phi, \varrho)$ is a quasi-homomorphism and
$$ X( \phi, \varrho) = X(\phi, \psi) + X(\psi, \varrho). $$
\end{enumerate}
\end{prop}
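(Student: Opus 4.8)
The plan is to produce the map $X(\phi,\psi)$ by a pullback construction and then to verify the eight properties, almost all of which are formal; the genuine work sits in the sign identity (2) and the cocycle identity (8). As a preliminary I would record the additivity of $X$ on orthogonal homomorphisms: if $\alpha\perp\beta\colon A\to C$ then $X(\alpha+\beta)=X(\alpha)+X(\beta)$. This is immediate from the fact, already noted for excisive $X$, that the canonical map $X(A)\oplus X(A)\to X(A\oplus A)$ is an isomorphism: orthogonality makes $\alpha\oplus\beta\colon A\oplus A\to C$, $(a,b)\mapsto\alpha(a)+\beta(b)$, an algebra homomorphism, precomposition with the diagonal $A\to A\oplus A$ recovers $\alpha+\beta$, and expressing $X$ of these two maps through the direct-sum decomposition gives the claim.

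For the construction: since $\phi(a)-\psi(a)\in B$ for all $a$, the composites $A\to D\to D/B$ induced by $\phi$ and $\psi$ coincide, so $(\phi,\psi)$ is an algebra homomorphism $A\to P$, where $P=D\times_{D/B}D$. In $P$ the copy $I=\{(b,0):b\in B\}\cong B$ is an ideal, the second-coordinate projection $p\colon P\to D$ has kernel $I$, and the diagonal $\delta\colon D\to P$ is an algebra section of $p$. Hence $0\to I\to P\xrightarrow{p}D\to 0$ is an $\ell$-linearly split extension; as $X$ is excisive and the extension is split by an algebra homomorphism, the associated triangle splits, yielding a retraction $\rho\colon X(P)\to X(I)=X(B)$ with $\rho\,X(I\hookrightarrow P)=\id$ and $\rho\,X(\delta)=0$. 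I then define $X(\phi,\psi):=\rho\circ X\big((\phi,\psi)\colon A\to P\big)$.

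Properties (1), (4), (5), (6), (7) follow formally: $(\phi,0)=(I\hookrightarrow P)\circ\phi$ gives (1), and $(\phi,\phi)=\delta\circ\phi$ gives (4); (5) is functoriality of $X((\phi,\psi))$ in its source; for (6) a homomorphism $\eta\colon D\to D'$ carrying $B$ into an ideal $B'$ of $D'$ induces $P\to D'\times_{D'/B'}D'$ compatibly with the ideal inclusions, projections and diagonals; for (7) a homotopy $H=(H^+,H^-)$ lands in $P[t]=(D[t])\times_{D[t]/B[t]}(D[t])$, and $X(\ev_0)=X(\ev_1)$ on $X(P[t])$ by homotopy invariance, so $X(\phi^+,\phi^-)=X(\psi^+,\psi^-)$ after applying the compatible retraction. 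For (2) I would use that the coordinate copies $I=\{(b,0)\}$ and $J=\{(0,b)\}$ of $B$ in $P$ are orthogonal ideals whose inclusions sum to $\delta|_B$; by the preliminary step $X(I\hookrightarrow P)+X(J\hookrightarrow P)=X(\delta|_B)$ lies in the image of $X(\delta)$, so $\rho\,X(J\hookrightarrow P)=-\id$, and combining this with the coordinate-swap automorphism of $P$ gives $X(\phi,\psi)=-X(\psi,\phi)$. Property (3) reduces to the preliminary step as well, since $(\phi_1,\psi_1)$ and $(\phi_2,\psi_2)$ become orthogonal homomorphisms $A\to P$ with sum $(\phi_1+\phi_2,\psi_1+\psi_2)$.

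The remaining and most delicate point is (8). Here I would pass to the triple fibre product $\widetilde P=D\times_{D/B}D\times_{D/B}D$, with its three pairwise orthogonal coordinate ideals $I_1,I_2,I_3\cong B$, triple diagonal $\widetilde\delta\colon D\to\widetilde P$, and the three pair-projections $\pi_{12},\pi_{23},\pi_{13}\colon\widetilde P\to D\times_{D/B}D$, which carry $g:=(\phi,\psi,\varrho)\colon A\to\widetilde P$ to $(\phi,\psi)$, $(\psi,\varrho)$, $(\phi,\varrho)$ respectively. Decomposing $X(\widetilde P)\cong X(I_1)\oplus X(I_2)\oplus X(D)$ via the split extension $0\to I_1\oplus I_2\to\widetilde P\to D\to 0$, and tracking, for each $\pi_{\bullet\bullet}$, which coordinate ideal of $D\times_{D/B}D$ it sends $I_1,I_2$ into and that it sends $\widetilde\delta$ to a diagonal (this is where the sign $\rho\,X(J\hookrightarrow\cdot)=-\id$ is needed), one gets $X(\phi,\psi)=(\rho_1-\rho_2)X(g)$, $X(\psi,\varrho)=\rho_2 X(g)$ and $X(\phi,\varrho)=\rho_1 X(g)$, where $\rho_1,\rho_2$ are the structure projections; this is exactly (8). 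The hard part, throughout, is precisely this bookkeeping in (2) and (8) — building the auxiliary fibre products and correctly reading off the induced maps and their signs on the summands of the split decompositions; once that is in place, everything else follows from functoriality and the elementary additivity lemma of the first paragraph.
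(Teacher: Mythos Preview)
The paper does not supply its own proof of this proposition: it is quoted verbatim from \cite{crr}*{Proposition 3.3} and used as a black box. So there is no in-paper argument to compare against.

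That said, your construction is the standard one and matches the argument in the cited reference: form the pullback $P=D\times_{D/B}D$, use the algebra-split extension $0\to B\to P\to D\to 0$ to produce the retraction $\rho$, and set $X(\phi,\psi)=\rho\circ X((\phi,\psi))$. Your derivations of (1)--(7) are correct; in particular the additivity lemma for orthogonal homomorphisms is exactly what is needed for (2) and (3), and your computation $\rho\,X(\mathrm{inc}_J)=-\mathrm{id}$ via $\mathrm{inc}_I+\mathrm{inc}_J=\delta|_B$ is the right idea.

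For (8), your triple-fibre-product argument is also correct. One small wording slip: the phrase ``which coordinate ideal of $D\times_{D/B}D$ it sends $I_1,I_2$ into'' is slightly misleading for $\pi_{13}$ and $\pi_{23}$, which kill $I_2$ and $I_1$ respectively rather than mapping them to $J$; but the formulas you write down, $\rho\circ X(\pi_{13})=\rho_1$, $\rho\circ X(\pi_{23})=\rho_2$, $\rho\circ X(\pi_{12})=\rho_1-\rho_2$, are exactly right and give $X(\phi,\varrho)=X(\phi,\psi)+X(\psi,\varrho)$. So the proposal is sound; it simply reproduces the proof from the source the paper cites.
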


The excisive homology theories form a
category, where a homomorphism between the theories $X:\aha\to
\cT$ and $Y:\aha\to \cU$ is a triangulated functor $G:\cT\to\cU$ such
that $GX=Y$ and such that for every extension \eqref{extensionE} in $\cE$, the natural isomorphism
$\phi:G(\Omega X(C))\to \Omega Y(C)$ makes the following into a commutative diagram
\begin{equation*}
\xymatrix{G(\Omega X(C))\ar[r]^(.6){G(\partial^X_E)}\ar[d]_\phi&Y(A)\\
             \Omega Y(C).\ar[ur]_{\partial^Y_E}& }
\end{equation*}

In \cite{kkwt} a functor $j:\aha\to kk$ was defined which is an initial object in the full subcategory of those excisive homology theories which are
homotopy invariant and $M_\infty$-stable. It was shown in \cite{emathesis} that, for any fixed infinite set $S$,  by a  slight variation of the construction of \cite{kkwt} one obtains an initial object in the full subcategory of those excisive and homotopy invariant homology theories which are $M_S$-stable. Starting in the next section we shall fix $S$ and use $j$ and $kk$ for the universal excisive, homotopy invariant and $S$-stable homology theory and its target triangulated category. Moreover, we shall often omit $j$ from our notation, and say, for example, that an algebra homomorphism
is an isomorphism in $kk$ or that a diagram in $\aha$ commutes in $kk$ or that a sequence of algebra maps
\[
A\to B\to C
\]
is a triangle in $kk$ to mean that $j$ applied to the corresponding morphism, diagram or sequence is an isomorphism, a commutative diagram or a distinguished triangle. Also, since as explained above, in $kk$ the corner inclusion $\iota_s:A\to M_SA$ is independent of $s$, we shall simply write $\iota$ for 
$j(\iota_s)$.  

The loop functor $\Omega$ in $kk$ and its inverse have a concrete description as follows. Let $\Omega_1=t(t-1)\ell[t]$, 
$\Omega_{-1}=(t-1)\ell[t,t^{-1}]$. For $A\in\aha$ we have 
\begin{equation}\label{loop}
\Omega^{\pm 1} j(A)=j(\Omega_{\pm 1}\otimes A).
\end{equation}

\begin{ex}\label{ex:tenso}
Let $S$ be an infinite set and $j:\aha\to kk$ the universal homotopy invariant, excisive and $M_S$-stable homology theory. If $ R \in \aha$, then the functor $ j((-) \otimes R) : \aha \to kk$ is again a homotopy invariant, $M_S$-stable, excisive homology theory. Hence it gives rise to a triangulated functor $kk \to kk$. In particular, triangles in $kk$ are preserved by tensor products. Moreover, the tensor product induces a ``cup product"
\[
\cup:kk(A,B)\otimes kk(R,S)\to kk(A\otimes R,B\otimes S),\quad \xi\cup\eta=(B\otimes\eta)\circ(\xi\otimes R). 
\] 
\end{ex}

For $A,B\in\aha$ and $n\in\Z$, set
\begin{equation}\label{kkn}
kk_n(A,B)=\hom_{kk}(j(A),\Omega^nj(B)),\qquad kk(A,B)=kk_0(A,B).    
\end{equation}

The groups $kk_*(A,B)$ are the \emph{bivariant $K$-theory} groups of the pair $(A,B)$. Setting $A=\ell$ in \eqref{kkn} we recover the homotopy algebraic $K$-groups of Weibel \cite{weih}; there is a natural isomorphism (\cite{kkwt}*{Theorem 8.2.1}, \cite{emathesis}*{Theorem 5.2.20})
\begin{equation}\label{map:kkkh}
kk_*(\ell,B)\iso KH_*(B)\qquad (B\in\aha).
\end{equation}

\begin{rem}\label{rem:kknoli}
Even though $KH$ is $I$-additive for every set $I$, the universal functor $j:\aha\to kk$ is not known to be infinitely additive. 
\end{rem}

\begin{lem}\label{lem:homomat}
Let $\{A_i:i\in I\}\subset\aha$ be a family of algebras, $A=\bigoplus_{i\in I}A_i$, $T$ a set, $\ju:I\to T$ a function and $v\in T$. Then the homomorphism
\[
\iota_\ju:A\to M_TA,\quad \iota_\ju(\sum_i a_i)=\sum_{i\in I}\epsilon_{\ju(i),\ju(i)}\otimes a_i
\]
is homotopic to $\iota_v$.  
\end{lem}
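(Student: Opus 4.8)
The plan is to reduce the statement to Lemma~\ref{lem:iotas} by decomposing everything along the direct sum $A=\bigoplus_{i\in I}A_i$. First I would record that, because the $A_i$ are ideals of $A$ with $A_iA_j=0$ for $i\ne j$, and because a finitely supported $T\times T$ matrix over $\bigoplus_iA_i$ has only finitely many of the $A_i$ occurring among its entries, one has $M_TA=\bigoplus_{i\in I}M_TA_i$ as a direct sum of ideals, and likewise $M_TA[t]=\bigoplus_{i\in I}M_TA_i[t]$. Under these identifications, writing $\iota^{(i)}_s:A_i\to M_TA_i$ for the corner inclusion $a\mapsto\epsilon_{s,s}\otimes a$, we have
\[
\iota_\ju=\bigoplus_{i\in I}\iota^{(i)}_{\ju(i)},\qquad \iota_v=\bigoplus_{i\in I}\iota^{(i)}_v .
\]
This decomposition is precisely what absorbs the possible failure of $\ju$ to be injective, so that no permutation-matrix argument is needed.

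Next, for each $i\in I$ I would produce an elementary homotopy $H_i:A_i\to M_TA_i[t]$ from $\iota^{(i)}_{\ju(i)}$ to $\iota^{(i)}_v$. If $\ju(i)=v$, take the constant homotopy $H_i=\iota^{(i)}_v$. If $\ju(i)\ne v$, apply Lemma~\ref{lem:iotas} to the algebra $A_i$ but inside the $2\times 2$ corner of $M_T$ indexed by $\{\ju(i),v\}$, with $\ju(i)$ in the role of the first index and $v$ in the role of the second: the matrix $U(t)$ from the proof of Lemma~\ref{lem:iotas} lives in $\GL_2$ of the unitalization of $A_i$, and $H_i:=\ad(U(t))\iota^{(i)}_{\ju(i)}$, viewed in $M_{\{\ju(i),v\}}A_i[t]\subseteq M_TA_i[t]$, satisfies $\ev_0H_i=\iota^{(i)}_{\ju(i)}$ and $\ev_1H_i=\iota^{(i)}_v$.

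Finally I would assemble $H:=\bigoplus_{i\in I}H_i$. Since every element of $A=\bigoplus_iA_i$ has only finitely many nonzero components, $H$ sends it to a finitely supported element of $\bigoplus_iM_TA_i[t]=M_TA[t]$, so $H$ is a well-defined map; and because the images land in the pairwise orthogonal ideals $M_TA_i[t]$, additivity and multiplicativity of $H$ follow componentwise from those of the $H_i$. Thus $H:A\to M_TA[t]$ is an algebra homomorphism with $\ev_0H=\bigoplus_i\iota^{(i)}_{\ju(i)}=\iota_\ju$ and $\ev_1H=\bigoplus_i\iota^{(i)}_v=\iota_v$, which is the required elementary homotopy $\iota_\ju\simh\iota_v$.

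The only point requiring any care --- the ``main obstacle'', though it is really bookkeeping --- is the legitimacy of forming the (possibly infinite) direct sum $\bigoplus_iH_i$ as a genuine algebra homomorphism into $M_TA[t]$ rather than into some completion of it; this rests on the identification $M_T\bigl(\bigoplus_iA_i\bigr)=\bigoplus_iM_TA_i$ and on the finiteness of supports of elements of $A=\bigoplus_iA_i$.
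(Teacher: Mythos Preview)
Your proof is correct and follows essentially the same approach as the paper: the paper's proof simply notes that $(M_TA)[x]=\bigoplus_{i\in I}(M_TA_i[x])$ to reduce to the case of a single index, and then invokes the rotational homotopy from Lemma~\ref{lem:iotas}. You have spelled out exactly this reduction and assembly in detail, including the bookkeeping about finite supports that justifies $M_T\bigl(\bigoplus_iA_i\bigr)=\bigoplus_iM_TA_i$.
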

\begin{proof} Because $(M_TA)[x]=\bigoplus_{i\in I}(M_TA_i[x])$, we may assume that $I$ has a single element, in which case the lemma follows using a rotational homotopy, as in the proof of Lemma \ref{lem:iotas}. 
\end{proof}
\begin{lem}\label{lem:biye=id}
Let $\{S_i:i\in I\}$ be a family of sets, $\sigma_i:S_i\to S_i$ an injective map, $(\sigma_i)_*:M_{S_i}\to M_{S_i}$, $(\sigma_i)_*(\epsilon_{s,t})=\epsilon_{\sigma_i(s),\sigma_i(t)}$ the induced homomorphism, $D=\bigoplus_{i\in I}M_{S_i}$, and $\sigma_*=\bigoplus_{i\in I}(\sigma_i)_*:D\to D$. 
If $X:\aha\to\cT$ is $M_2$-invariant, then $X(\sigma_*)$ is the identity map. 
\end{lem}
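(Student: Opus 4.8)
The plan is to reduce the statement to the case of a single matrix algebra $M_S$ with an injective self-map $\sigma$, and then to further reduce to the case where $S$ is finite by a direct-limit/corner argument. Since $X$ is assumed $M_2$-invariant, it is in particular homotopy invariant is \emph{not} assumed here; rather the only hypothesis is $M_2$-invariance, so I will have to work purely with the conjugation lemmas \ref{lem:conju1} and \ref{lem:boxplus}-style moves, not with homotopies of the variable $t$.

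First I would observe that $X(\sigma_*) = X(\bigoplus_i (\sigma_i)_*)$, and since an $M_2$-invariant excisive theory sends finite (indeed here the relevant) direct sums to direct sums, it suffices to treat each factor separately; that is, it suffices to show that for a single set $S$ and a single injective map $\sigma:S\to S$, the induced homomorphism $(\sigma)_*:M_S\to M_S$ has $X((\sigma)_*)=\id$. Next I would realize $(\sigma)_*$ as (up to a corner inclusion) a map of the form $\ad(y,x)$ in the sense of \eqref{map:adyx}: working inside a larger matrix algebra (say $M_{S\sqcup S}$ or $M_2 M_S$), one can take $x=\sum_{s}\epsilon_{\sigma(s),s}$ and $y=\sum_s\epsilon_{s,\sigma(s)}$, which are legitimate elements of $\Gamma_S(\ell)$ — but to stay inside $M_S$ I will instead exhibit $(\sigma)_*$ via a conjugation by an honest partial isometry sitting inside a unitalization of $M_S$ (or inside $M_{S}(\tilde\ell)$). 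Concretely, $x=\sum_s \epsilon_{\sigma(s),s}$ satisfies $yx=1$ on the span of the $\epsilon_{s,s}$, i.e. $a(xy)a' = aa'$ for all $a,a'\in M_S$ when $\sigma$ is injective (here $xy=\sum_s\epsilon_{\sigma(s),\sigma(s)}$ is an idempotent that acts as the identity from the left on everything in the image of $(\sigma)_*$, but the relation we need, $a\,xy\,a'=aa'$ for $a,a'\in M_S$, holds because $ax=a\cdot(\text{full }x)$ and then $\cdot y$ returns us: actually $x\,y = $ projection onto $\operatorname{im}\sigma$, so this is not quite $aa'$ unless $\sigma$ is surjective).

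So the honest route is: apply Lemma \ref{lem:conju1} with $C=M_S$, $A=B=M_S$, and the pair $(x,y)$ chosen so that $\ad(y,x)=(\sigma)_*$ and $x\,M_S\,y\subset M_S$, $M_S\,x, y\,M_S\subset M_S$. For this one needs $y a x\in M_S$ and $axya'=aa'$; taking $x=\sum_s\epsilon_{\sigma(s),s}\in\Gamma_S(\tilde\ell)$, $y=\sum_s\epsilon_{s,\sigma(s)}\in\Gamma_S(\tilde\ell)$ gives $yax = (\sigma^{-1})$-relabelling which lands in $M_S$, and $xya' $: here $xy=\sum_s\epsilon_{\sigma(s),\sigma(s)}$ and for $a'=\epsilon_{p,q}$ with $p,q\in S$ we get $xy\,a'=\epsilon_{p,q}$ only when $p\in\operatorname{im}\sigma$; this fails in general. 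The fix is the standard one: enlarge to $M_2 M_S = M_{S\sqcup S}$ and use that $\sigma\sqcup\id$ on $S\sqcup S$ is conjugate to $\id\sqcup\sigma$ etc., but more efficiently I would invoke Lemma \ref{lem:conju1}'s \emph{first} assertion rather than the second: with $A=M_S\cdot x \cdot y$ sitting as a (nonunital) subalgebra, show $\inc_B\ad(y,x)\simh_{M_2}\inc_A$ where $B=M_S$ and $\operatorname{im}(\sigma)_* $ plays the role of $A$; then separately observe $(\sigma)_*$ is an isomorphism of $M_S$ onto that corner subalgebra, so $X$ of it composed with $X$ of the corner inclusion equals $X$ of the corner inclusion $M_{\operatorname{im}\sigma}\hookrightarrow M_S$, which is itself an iso by $M_2$-invariance applied repeatedly/by Lemma \ref{lem:homomat}. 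Chasing these two facts gives $X((\sigma)_*)=\id$.

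The main obstacle, and where I would spend the most care, is precisely this non-surjectivity of $\sigma$: conjugation by the partial isometry $(x,y)$ only directly shows $\ad(y,x)$ is $M_2$-homotopic to the inclusion of a proper corner, so I must separately argue that the inclusion $M_{\sigma(S)}\hookrightarrow M_S$ induces the identity — which is where Lemma \ref{lem:homomat} (with $\ju$ the inclusion $\sigma(S)\hookrightarrow S$, composed with the identification $M_{\sigma(S)}\cong M_S$ via $\sigma$) does the job, after noting $M_2$-invariance forces all corner inclusions $\iota_s$ to become isomorphisms and hence, by the independence-of-$s$ argument already cited from \cite{www}*{Lemma 2.2.4}, that the two relevant corner inclusions agree on $X$. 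Assembling: $X((\sigma)_*) = X(\iota_{\sigma})^{-1}\circ X(\iota) \circ \ldots = \id$, and then taking the direct sum over $i\in I$ finishes the lemma. A routine compatibility check that all the diagrams of $\ad(y,x)$-maps genuinely commute (not just up to $M_2$-homotopy, but strictly, so that Lemma \ref{lem:conju1} applies verbatim) is the only remaining bookkeeping.
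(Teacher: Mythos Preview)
Your proposal contains a bookkeeping slip that sends you on an unnecessary detour, and a reduction step that is not justified under the stated hypotheses.

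First, the reduction to a single factor is illegitimate: the lemma assumes only that $X$ is $M_2$-invariant, not that it is excisive or $I$-additive, and $I$ may well be infinite. So you cannot split $X(\sigma_*)$ as a sum over $i$. The paper avoids this entirely by treating the whole direct sum at once.

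Second, and this is the real point: you actually had the proof in hand and then talked yourself out of it. With your $x=\sum_s\epsilon_{\sigma(s),s}=[\sigma]$ and $y=\sum_s\epsilon_{s,\sigma(s)}=[\sigma]^*$ in $\Gamma_S$, one has $(\sigma)_*(a)=xay$, so in the notation of \eqref{map:adyx} this is $\ad(x,y)$. The hypothesis of Lemma~\ref{lem:conju1} for $\ad(x,y)$ is $a\,yx\,a'=aa'$, \emph{not} $a\,xy\,a'=aa'$. You correctly computed $yx=1$ (this is precisely where injectivity of $\sigma$ enters) but then checked the irrelevant product $xy$, which is only the projection onto $\mathrm{im}\,\sigma$. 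With the correct product, the second assertion of Lemma~\ref{lem:conju1} (equivalently \cite{www}*{Proposition~2.2.6}) applies directly with $A=B=M_S$ sitting as an ideal in $C=\Gamma_S$, and gives $X((\sigma)_*)=\id$ in one stroke. For the general statement one does exactly the same with $D=\bigoplus_i M_{S_i}$ as an ideal in $\prod_i\Gamma_{S_i}$ and $[\sigma]=([\sigma_i])_i$; again $[\sigma]^*[\sigma]$ acts as the identity on $D$, and no splitting into factors is needed.

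This is exactly the paper's proof. Your fallback route through corner inclusions and Lemma~\ref{lem:homomat} is both unnecessary and problematic: Lemma~\ref{lem:homomat} produces a \emph{homotopy}, so invoking it would require $X$ to be homotopy invariant, which is not among the hypotheses here.
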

\begin{proof} The map $\sigma_i$ induces an $\ell$-module homomorphism $\ell^{(S_i)}\to\ell^{(S_i)}$ whose matrix $[\sigma_i]$ is an element of the ring $\Gamma_{S_i}$ of \ref{nota:intro}. Let $[\sigma_i]^*$ be the transpose matrix; we have $[\sigma_i]^*[\sigma_i]=1$, and $(\sigma_i)_*(a)=[\sigma_i]a[\sigma_i^*]$ $(a\in M_{S_i})$. Hence for $[\sigma]=\bigoplus_{i\in I}[\sigma_i]\in R=\bigoplus_{i\in I}\Gamma_{S_i}$, we have $\sigma_*(a)=[\sigma]a[\sigma]^*$. Since $D\triqui R$, $X(\sigma_*)$ is the identity by \cite{www}*{Proposition 2.2.6}.
\end{proof}

\begin{prop}\label{prop:sumaiotas} Let $\{S_i:i\in I\}$ be a family of sets, $v_i\in S_i$ and $S=\coprod_{i\in I}S_i$. Let $f=\bigoplus_{i\in I}\iota_{v_i}:\ell^{(I)}\to \oplus_{i\in I}M_{S_i}$. Let $T$ be an infinite set with $\#T\ge \#S$. Let $j:\aha\to kk$ be the universal excisive, homotopy invariant and $M_T$-stable homology theory. Then $j(f)$ is an isomorphism.
\end{prop}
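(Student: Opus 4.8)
The plan is to reduce the statement to the two lemmas just proved (Lemma \ref{lem:homomat} and Lemma \ref{lem:biye=id}) by choosing convenient identifications of the index sets. First I would fix an element $v\in S$ (say $v=v_{i_0}$ for some $i_0\in I$) and consider the corner inclusion $\iota_v:\ell\to M_S\subset M_T$; since $T$ is infinite with $\#T\ge\#S$ and $j$ is $M_T$-stable, $j$ is also $S$-stable, so $j(\iota_v)$ is an isomorphism. The strategy is then to exhibit $j(f)$ as a composite of $j(\iota_v)$ (or $j(\iota_v)$ for $\ell^{(I)}$ in place of $\ell$) with maps that Lemmas \ref{lem:homomat} and \ref{lem:biye=id} force to become isomorphisms under $j$.

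Concretely, I would proceed as follows. Pick for each $i$ an injection $S_i\hookrightarrow S=\coprod_i S_i$ (the canonical one), giving a function $\ju:I\to S$ by $\ju(i)=v_i$. By Lemma \ref{lem:homomat}, the map $\iota_\ju:\ell^{(I)}\to M_S(\ell^{(I)})$ defined by $\iota_\ju(\sum_i a_i)=\sum_i \epsilon_{\ju(i),\ju(i)}\otimes a_i$ is homotopic to $\iota_v$, hence $j(\iota_\ju)$ is an isomorphism. Now I claim there is a commuting square relating $\iota_\ju$ to $f$ followed by the block-diagonal inclusion $\bigoplus_i M_{S_i}\hookrightarrow M_S$ (placing the $S_i$-block in the corresponding sub-block of $M_S$). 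The point is that $f$ sends the $i$-th copy of $\ell$ to $\epsilon_{v_i,v_i}\otimes\ell\subset M_{S_i}$, and the block inclusion then carries this to $\epsilon_{\ju(i),\ju(i)}$ inside $M_S$ — exactly the image of $\iota_\ju$. So $(\text{block incl})\circ f$ equals $\iota_\ju$ up to the identification $M_S\cong M_S\ell = M_S$ and a permutation of coordinates that, by Lemma \ref{lem:biye=id} (applied with a single set $S$ and the permutation in question, which is induced by an injective map on indices), is sent by $j$ to the identity. Hence $j(\text{block incl})\circ j(f) = j(\iota_\ju) = $ iso.

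It remains to show $j(\text{block incl})$ is an isomorphism, where $\text{block incl}:\bigoplus_i M_{S_i}\to M_S$ is the block-diagonal inclusion. This is where I would again use Lemma \ref{lem:homomat}, or argue directly: $M_S = M_S\ell$ and the block inclusion is, on each summand, a corner-type inclusion $M_{S_i}\to M_S$, which $j$ inverts by $M_S$-stability (or $M_T$-stability) via the argument of \cite{www}*{Lemma 2.2.4}; compatibility of these across the direct sum, together with the fact that the images are orthogonal idempotent corners summing to all of $M_S$, gives that $j(\text{block incl})$ is an isomorphism. (Alternatively one realizes $\text{block incl}$ as $\iota_{\ju'}$ for a suitable $\ju':\coprod_i S_i\to S$ and quotes Lemma \ref{lem:homomat} directly — this is cleaner.) Combining, $j(f) = j(\text{block incl})^{-1}\circ j(\iota_\ju)$ is an isomorphism.

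The main obstacle I anticipate is the bookkeeping around \emph{which} stability hypothesis is available at each step: $j$ is only assumed $M_T$-stable, so every corner inclusion I use must be one into a matrix algebra indexed by a set of size $\le\#T$, and I must invoke the transfer from $M_T$-stability to $M_S$-stability (and to $T'$-stability for $\#T'\le\#T$) that the excerpt records after \eqref{map:iotas}. The subtlety the proposition is flagging — that $I$-indexed direct sums need not exist in $kk$ — means I cannot simply say "$j$ commutes with $\bigoplus_i$ and apply $M_{S_i}$-stability termwise"; instead the direct sum $\bigoplus_i M_{S_i}$ must be handled as a single algebra sitting inside the single matrix algebra $M_S$, with Lemmas \ref{lem:homomat} and \ref{lem:biye=id} doing all the work that additivity of $j$ would otherwise do. Getting the homotopies and the permutation-of-coordinates argument to apply uniformly over the (possibly infinite) family $I$ — which is exactly why those two lemmas were stated for arbitrary families — is the crux.
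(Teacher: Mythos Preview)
There is a genuine gap, and it lies exactly where you suspected the danger was.

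First a bookkeeping issue: your ``block inclusion'' $\bigoplus_i M_{S_i}\to M_S$ has the wrong target. The map $\iota_\ju$ of Lemma~\ref{lem:homomat} lands in $M_S\ell^{(I)}$, not in $M_S$; on $\chi_i$ it gives $\epsilon_{v_i,v_i}\otimes\chi_i$, which remembers the index $i$. The map that actually satisfies $\inc\circ f=\iota_\ju$ is the inclusion $\inc:D:=\bigoplus_i M_{S_i}\hookrightarrow M_S\ell^{(I)}$ sending $\epsilon_{\alpha,\beta}\in M_{S_i}$ to $\epsilon_{\alpha,\beta}\otimes\chi_i$. With the target $M_S$ the statement you want is simply false: $j(M_S)\cong j(\ell)$, whereas the proposition is precisely that $j(D)\cong j(\ell^{(I)})$.

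Second, and more seriously: even after fixing the target to $M_S\ell^{(I)}$, your argument that $j(\inc)$ is an isomorphism does not go through. The ``termwise corner inclusion'' argument is exactly the $I$-additivity of $j$ that you correctly said is unavailable. And your alternative of realizing $\inc$ as some $\iota_{\ju'}$ and quoting Lemma~\ref{lem:homomat} cannot work either: any $\iota_{\ju'}$ on $D$ has codomain $M_TD$ for some $T$, never $M_S\ell^{(I)}$.

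The paper avoids ever proving $j(\inc)$ is an isomorphism \emph{directly}. Instead it looks at the composite going the other way, $g:=(M_Sf)\circ\inc:D\to M_SD$, which sends $\epsilon_{\alpha,\beta}\in M_{S_i}$ to $\epsilon_{\alpha,\beta}\otimes\epsilon_{v_i,v_i}$. A coordinate permutation $\sigma_i$ of $S\times S_i$ swapping $S_i\times\{v_i\}$ with $\{v_i\}\times S_i$ turns $g$ into a map of the form $\iota_\ju$; Lemma~\ref{lem:biye=id} says $j(\sigma_*)=\id$ and Lemma~\ref{lem:homomat} then gives $j(g)=j(\iota)$. Now one has $j(\inc)j(f)=j(\iota)$ and $j(M_Sf)j(\inc)=j(\iota)$, so $j(\inc)$ has both a left and a right inverse; hence $j(\inc)$ and $j(f)$ are isomorphisms. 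The second composite $g$ and the permutation trick are the missing ingredients in your proposal.
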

\begin{proof}
Put $D=\bigoplus_{i\in I}M_{S_i}$. Let $\inc:D\to M_S\ell^{(I)}$ be the inclusion. By Lemma \ref{lem:homomat}, the composite $\inc f$ equals the canonical inclusion $\iota$ in $kk$. Next let $g=(M_Sf)\inc:D\to M_SD$. We have $g(\epsilon_{\alpha,\beta})=\epsilon_{\alpha,\beta}\otimes \epsilon_{v_i,v_i}$ ($\alpha,\beta\in S_i$). For each $i\in I$ extend the coordinate permutation map $S_i\times\{v_i\}\to \{v_i\}\times S_i$, to a bijection $\sigma_i:S\times S_i\to S\times S_i$, and let $(\sigma_i)_*$ be the induced automorphism of $M_S M_{S_i}\cong M_{S\times S_i}$. Consider the automorphism $\sigma_*=\bigoplus_{i\in I}(\sigma_i)_*:M_SD\to M_SD$; by Lemmas \ref{lem:homomat} and \ref{lem:biye=id}, $\iota=j(\sigma_*g)=j(g)$. From what we have just seen and Example \ref{ex:tenso}, in $kk$ the following diagram commutes and its horizontal arrows are isomorphisms.
\[
\xymatrix{\ell^{(I)}\ar[d]_{f}\ar[r]^{\iota}& M_S\ell^{(I)}\ar[d]_{M_Sf}\ar[r]^{M_S\iota}&M_SM_S\ell^{(I)}\ar[d]^{M_SM_Sf}\\
          D\ar[r]^{\iota}\ar[ur]^{\inc}& M_SD\ar[r]_{M_S\iota}\ar[ur]^{M_S\inc}&M_SM_SD} 
\]
It follows that $M_Sf$ and $f$ are isomorphisms in $kk$.
\end{proof}

\section{Cohn algebras and \emph{kk}}\label{sec:algcohn}

A directed \emph{graph} is a quadruple $E=(E^0, E^1, r, s)$ where $E^0$ and $E^1$ are the sets of vertices and edges, and $r$ and $s$ are the \emph{range} and \emph{source} functions $E^1\to E^0$. We call $E$ \emph{finite} if both $E^0$ and $E^1$ are finite. A vertex $v \in E^0$ is a \emph{sink} if $s^{-1}(v) = \emptyset$ and is an \emph{infinite emitter} if $s^{-1}(v)$ is infinite. A vertex $v$ is \emph{singular} if it is either a sink or an infinite emitter; we call $v$ \emph{regular} if it is not singular.  A vertex $v \in E^0$ is a {\it source} if $r^{-1}(v) = \emptyset$. We write $\sink(E)$, $\inf(E)$ and $\sour(E)$ for the sets of sinks, infinite emitters, and sources, and $\sing(E)$ and $\reg(E)$ for those of singular and of regular vertices. 

A finite {\it path}  $\mu$ in a graph $E$ is a sequence of edges $\mu = e_1 \dots e_n$ such that $r(e_i) = s(e_{i+1})$ for $i = 1, \dots, n-1$. In this case $|\mu|:=n$ is the \emph{length} of $\mu$. We view the vertices of $E$ as paths of length $0$. Write $\path(E)$ for the set of all finite paths in $E$. The range and source functions $r,s$ extend to  $\path(E) \to E^0$ in the obvious way. An edge $f$ is an {\it exit} for a path $\mu = e_1 \dots e_n$ if there exist $i$ such that $s(f) = s(e_i) $ and $f \neq e_i$. A path $\mu = e_1 \dots e_n$ with $n \ge 1$ is a {\it closed path} at $v$ if $s(e_1) = r(e_n) = v$. A closed path $\mu = e_1 \dots e_n$ at $v$ is a {\it cycle} at $v$ if $s(e_j) \neq s(e_i) $ for $i \neq j$.

The {\it Cohn path algebra} $C(E)$ of a graph $E$ is the quotient of the free associative $\ell$-algebra generated by the set $ E^0 \cup E^1 \cup \{ e^* \mid e \in E^1\}$, subject to the relations:
\begin{enumerate}
    \item[(V)] $ v \cdot w = \delta_{v,w} v$.
    \item [(E1)] $s(e) \cdot e = e = e \cdot r(e)$.
    \item [(E2)] $r(e) \cdot e^* = e^* = e^* \cdot s(e)$.
    \item [(CK1) ]  $ e^* \cdot f = \delta_{e,f} \ r(e)$.
\end{enumerate}
The algebra $C(E)$ is in fact a $*$-algebra; it is equipped with an involution $*:C(E)\to C(E)^{op}$ which fixes vertices and maps $e\mapsto e^*$ $(e\in Q^1$).  
Condition $V$ says that the vertices of $E$ are orthogonal idempotents in $C(E)$. Hence the subspace generated by $E^0$ is a subalgebra of $C(E)$, isomorphic to the algebra $\ell^{(E^0)}$ finitely supported functions $E^0\to \ell$. For $v\in E^0$, let $\chi_v\in\ell^{(E^0)}$ be the characteristic function of $\{v\}$.
We have a monomorphism
\begin{equation}\label{map:phi}
\varphi: \ell^{(E^0)}\to C(E), \quad \varphi(\chi_v)=v.
\end{equation}
Observe that if $E^0$ is finite, then $\ell^{(E^0)}=\ell^{E^0}$ is the algebra of all functions $E^0\to \ell$.

We shall say that a homology theory is \emph{$E$-stable} if it is stable with respect to a set of cardinality $\#(E^0\coprod E^1\coprod\N)$.  

The main result of this section is the following theorem.

\begin{teo}\label{thm:ftg} Let $\varphi$ be the algebra homomorphism \eqref{map:phi} and let $j:\aha\to kk$ be the universal excisive, homotopy invariant and $E$-stable homology theory. Then $j(\varphi)$ is an isomorphism.
\end{teo}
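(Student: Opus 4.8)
The plan is to exhibit a quasi-homomorphism (in the sense of the Definition above) from $C(E)$ to a matrix algebra over $\ell^{(E^0)}$ which will be, in $kk$, a two-sided inverse to $j(\varphi)$, much as Cuntz produces a Bott element for $C^*$-algebra $K$-theory. First I would build, inside $C(E)$ itself (or a suitable matrix amplification), the elements witnessing the relations: for each edge $e\in E^1$ one has $e, e^*\in C(E)$ with $e^*f=\delta_{e,f}r(e)$ by (CK1), so the operators given by ``multiply by $e$ on the left'' and ``multiply by $e^*$'' behave like a family of orthogonal partial isometries. The key algebraic identity is that for a regular vertex $v$ one has $\sum_{s(e)=v}ee^*$ a genuine idempotent $\le v$ in $C(E)$ (it is \emph{not} equal to $v$ — that would be the Leavitt/(CK2) relation, which fails in $C(E)$), while for $v$ singular the analogous sum is not even defined. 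This asymmetry is exactly what makes $\ell^{(E^0)}\hookrightarrow C(E)$ a $kk$-equivalence rather than an isomorphism of something larger.

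The main steps, in order, would be: (1) Fix a set $S$ of cardinality $\#(E^0\sqcup E^1\sqcup\N)$ and work with the $M_S$-stable $j$, so that $\varphi$ may be replaced by $\iota\varphi:\ell^{(E^0)}\to M_SC(E)$ without changing anything in $kk$; this gives room to ``spread out'' the edges onto disjoint coordinates. (2) Using the partial isometries $\{e\}_{e\in E^1}$ and the coordinates of $S$, construct two orthogonal homomorphisms $\phi,\psi: C(E)\to D\rhd M_S\ell^{(E^0)}$ whose difference lands in $M_S\ell^{(E^0)}$, i.e.\ a quasi-homomorphism $(\phi,\psi)$; the natural candidate is to let $\phi$ be (a coordinate shift of) the identity inclusion and $\psi$ the map built by conjugating by the row of the $e$'s, so that $\phi(a)-\psi(a)$ collapses, by (V), (E1), (E2), (CK1), onto the diagonal idempotents $v\in\ell^{(E^0)}$. (3) Invoke Proposition~\ref{prop:q-h} to get $j(\phi,\psi):j(C(E))\to j(\ell^{(E^0)})$, and then compute the two composites: $j(\phi,\psi)\circ j(\varphi)$ and $j(\varphi)\circ j(\phi,\psi)$. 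Here I would use parts (1)--(8) of Proposition~\ref{prop:q-h} — in particular orthogonal additivity (3), the homotopy invariance clause (7), and naturality (5),(6) — together with Lemma~\ref{lem:conju1} and Lemma~\ref{lem:iotas} to absorb the coordinate bookkeeping and the conjugations up to $M_2$-homotopy. One composite should reduce to $\iota$ on $\ell^{(E^0)}$ (hence an isomorphism by $M_S$-stability), and the other to $\iota$ on $C(E)$, via an explicit elementary homotopy $H:C(E)\to D[t]\rhd (M_SC(E))[t]$ rotating the ``edge part'' against the ``vertex part''.

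The hard part, I expect, is step (3) — more precisely, producing the elementary homotopy in $C(E)[t]$ that identifies $j(\varphi)\circ j(\phi,\psi)$ with $\iota_{C(E)}$. The obstruction is that $C(E)$, unlike the Leavitt algebra, has no relation forcing $\sum_{s(e)=v}ee^*=v$, so the ``defect'' idempotent $v-\sum_{s(e)=v}ee^*$ is nonzero and one must check that the quasi-homomorphism genuinely sees only the vertices and not this defect; concretely, one has to verify that the conjugating element (a row of the $e$'s together with a complementary column supported on the defect idempotents) is, after amplification, connected to the identity by an explicit path of the form used in Lemma~\ref{lem:iotas}, and that this path is compatible with the ideal $M_S\ell^{(E^0)}\lhd D$ so that Proposition~\ref{prop:q-h}(7) applies. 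Handling infinite emitters requires care: there the relevant sums are infinite, so one genuinely needs the $\Gamma_S$/$M_S$ formalism of Notation~\ref{nota:intro} and the injective-coordinate-shift trick of Lemma~\ref{lem:biye=id} and Proposition~\ref{prop:sumaiotas}, rather than a naive finite sum, to make $\phi$ and $\psi$ well-defined homomorphisms at all. Once those two homotopies are in hand, the theorem follows formally.
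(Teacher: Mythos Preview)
Your overall strategy is correct and matches the paper's: build a quasi-homomorphism out of $C(E)$ whose $kk$-class inverts $j(\varphi)$, using the Cuntz machinery of Proposition~\ref{prop:q-h}. The easy composite (showing $j(\phi,\psi)\circ j(\varphi)=1$) goes essentially as you describe. But two of your technical choices would not work as written, and the paper handles them differently.

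First, your target ideal. You propose $(\phi,\psi):C(E)\to D\rhd M_S\ell^{(E^0)}$ with $\phi$ ``the identity inclusion'' and $\psi$ conjugation by the row of edges. But then $\phi(e)-\psi(e)$ involves $e$ and not only vertices, so the difference does not land in a matrix algebra over $\ell^{(E^0)}$. The paper instead takes the ideal to be $\hat{\cK}(E)=\langle q_v:v\in E^0\rangle$ inside an enlargement $C^m(E)$ of $C(E)$ obtained by formally adjoining idempotents $m_v$ for each infinite emitter $v$, subject to $m_ve=e$, $m_vw=\delta_{v,w}m_v$, etc. The second map is $\xi(v)=m_v$, $\xi(e)=em_{r(e)}$, and then $\can(a)-\xi(a)\in\hat{\cK}(E)$ for all $a$; one computes $\can(e)-\xi(e)=eq_{r(e)}$. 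The identification $\hat{\cK}(E)\cong\bigoplus_{v}M_{\cP_v}$ and Proposition~\ref{prop:sumaiotas} then give the $kk$-equivalence with $\ell^{(E^0)}$. So the $\Gamma_S/M_S$ formalism you invoke for infinite emitters is replaced by the purely algebraic device of $C^m(E)$; the faithful matrix representation $\rho:C^m(E)\to\Gamma_{\cP}$ is used only to prove $C(E)\hookrightarrow C^m(E)$.

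Second, the hard composite. You expect an explicit elementary homotopy in $C(E)[t]$ (or $M_SC(E)[t]$) rotating edges against vertices. The paper does not find one directly; instead it builds an auxiliary algebra $D=(C^m(E)\ltimes_{\rho'}\fA)/J$, where $\fA\subset M_{\cP}C(E)$ is a carefully chosen subalgebra containing both the twisted corner inclusion $\hat\iota_\tau$ and the map $\hat\varphi:\hat{\cK}(E)\to M_{\cP}C(E)$, and $J$ identifies $\alpha q_v\beta^*$ with $\epsilon_{\alpha,\beta}\otimes v$. Inside $D$ one has three maps $\psi_0=\Upsilon\can$, $\psi_1=\Upsilon\xi$, and $\psi_{1/2}=\psi_1+\hat\iota_\tau$ (orthogonal sum), and the key homotopy (Lemma~\ref{lem:pepe}) shows $(\psi_0,\psi_{1/2})$ is null. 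The identity $j(\psi_0,\psi_1)=j(\psi_0,\psi_{1/2})+j(\psi_{1/2},\psi_1)=j(\hat\iota_\tau)$ then yields the other composite. The point is that the homotopy exists only after passing to $D$, where the defect idempotents $q_v$ have been glued to matrix units; your proposal to stay inside $M_SC(E)[t]$ would run into exactly the obstruction you name --- the nonzero $q_v$ --- and there is no evident path around it without a quotient of this kind.
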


\begin{coro}\label{coro:isoce}
Let $E$ and $F$ be graphs and $j:\aha\to kk$ the universal excisive, homotopy invariant and $E\sqcup F$-stable homology theory. Assume that 
$KH_0(\ell)\cong \Z$. Then $C(E)$ and $C(F)$ are isomorphic in $kk$ if and only if $\# E^0=\# F^0$. 
\end{coro}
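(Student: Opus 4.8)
The plan is to use Theorem \ref{thm:ftg} to replace the Cohn algebras by the much more tractable algebras $\ell^{(E^0)}$ and $\ell^{(F^0)}$, and then to argue that the latter are distinguished in $kk$ by their $K$-theory in degree $0$. First I would invoke Theorem \ref{thm:ftg} twice: since $j$ is taken to be $E\sqcup F$-stable, it is in particular $E$-stable and $F$-stable (a theory stable with respect to a set of cardinality $\#(E^0\sqcup E^1\sqcup F^0\sqcup F^1\sqcup\N)$ is stable with respect to any set of smaller or equal cardinality, by the remark following \eqref{map:iotas}), so $j(\varphi_E)\colon j(\ell^{(E^0)})\to j(C(E))$ and $j(\varphi_F)\colon j(\ell^{(F^0)})\to j(C(F))$ are isomorphisms in $kk$. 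Hence $C(E)\cong C(F)$ in $kk$ if and only if $\ell^{(E^0)}\cong\ell^{(F^0)}$ in $kk$.

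Next I would address the ``if'' direction. If $\#E^0=\#F^0$, then $\ell^{(E^0)}$ and $\ell^{(F^0)}$ are literally isomorphic as $\ell$-algebras, via any bijection $E^0\to F^0$, so certainly $j(\ell^{(E^0)})\cong j(\ell^{(F^0)})$, and therefore $C(E)\cong C(F)$ in $kk$. For the ``only if'' direction, suppose $C(E)\cong C(F)$ in $kk$; then $\ell^{(E^0)}\cong\ell^{(F^0)}$ in $kk$, and I would apply $KH_0=kk_0(\ell,-)$ to this isomorphism. Here I use that $KH$ is $I$-additive for every set $I$ (Remark \ref{rem:kknoli}), so $KH_0(\ell^{(E^0)})=\bigoplus_{E^0}KH_0(\ell)\cong\Z^{(E^0)}$, using the hypothesis $KH_0(\ell)\cong\Z$; likewise $KH_0(\ell^{(F^0)})\cong\Z^{(F^0)}$. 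A $kk$-isomorphism induces an isomorphism of $KH_0$-groups, so $\Z^{(E^0)}\cong\Z^{(F^0)}$ as abelian groups, whence $\#E^0=\#F^0$ (two free abelian groups are isomorphic iff they have the same rank; for infinite $E^0$ one compares cardinalities of bases, for finite $E^0$ one compares ranks). This completes the chain of equivalences: $C(E)\cong C(F)$ in $kk$ $\iff$ $\ell^{(E^0)}\cong\ell^{(F^0)}$ in $kk$ $\iff$ $\#E^0=\#F^0$, and along the way we have exhibited $K_0(C(E))\cong K_0(C(F))$ as the middle term (note $K_0(C(E))\cong KH_0(C(E))\cong KH_0(\ell^{(E^0)})\cong\Z^{(E^0)}$, the first isomorphism because $C(E)$ is, up to a direct sum of matrix algebras and Morita invariance, $K_0$-regular — or more directly because the statement only asserts $K_0(C(E))\cong K_0(C(F))\iff \#E^0=\#F^0$, which follows from $K_0(C(E))$ being free of rank $\#E^0$, a standard fact about Cohn path algebras).

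The one point that requires a little care, rather than a real obstacle, is making sure the $E\sqcup F$-stable $j$ may legitimately be fed into Theorem \ref{thm:ftg} for the single graphs $E$ and $F$ separately. This is exactly the content of the sentence in the introduction that in statements involving several graphs $E_1,\dots,E_n$, ``$j$ is understood to refer to the $\sqcup_iE_i$-stable $j$,'' together with the fact established after \eqref{map:iotas} that $M_S$-stability for an infinite $S$ implies $T$-stability whenever $\#T\le\#S$; so $j$ being $E\sqcup F$-stable entails that $j(\varphi_E)$ and $j(\varphi_F)$ are both isomorphisms. Beyond that, everything is a formal consequence of Theorem \ref{thm:ftg}, the additivity of $KH$, and the computation $KH_0(\ell)=\Z$, so there is no genuinely hard step.
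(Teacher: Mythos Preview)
Your proof is correct and follows essentially the same approach as the paper's: reduce via Theorem \ref{thm:ftg} to comparing $\ell^{(E^0)}$ and $\ell^{(F^0)}$ in $kk$, then use $KH_0$ together with the hypothesis $KH_0(\ell)\cong\Z$ to recover $\#E^0$. Your write-up simply makes explicit a few points the paper leaves implicit (why $E\sqcup F$-stability suffices to apply Theorem \ref{thm:ftg} to each graph separately, the additivity of $KH$, and why $\Z^{(E^0)}\cong\Z^{(F^0)}$ forces equal cardinality).
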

\begin{proof} By Theorem \ref{thm:ftg}, $C(E)$ and $C(F)$ are isomorphic in $kk$ if and only if $\ell^{(E^0)}$ and $\ell^{(F^0)}$ are. If $\# E^0=\# F^0$ then $\ell^{(E^0)}$ and $\ell^{(F^0)}$ are isomorphic in $\aha$, and therefore also in $kk$. Assume conversely that $\ell^{(E^0)}$ and $\ell^{(F^0)}$ are isomorphic in $kk$. Then in view of \eqref{map:kkkh} and of the hypothesis that $KH_0(\ell)\cong\Z$, we have $\# E^0=\# F^0$.
\end{proof}

The proof of Theorem \ref{thm:ftg} is organized in four parts, with three lemmas interspersed. First we need some preliminaries.

 Associate an element $m_v\in C(E)$ to each $v\in E^0\setminus\inf(E)$ as follows
  \[
  m_v = \begin{cases}
  \sum_{e \in s^{-1}(v)  } ee^* & \qquad \text{if } v \in \reg(E)  \\
  0 & \qquad \text{if } v \in \sour(E).  \end{cases} 
  \]
Observe that $m_v$ satisfies the following identities:
\begin{equation}\label{mv}
m_v=m_v^*,\ \ m_v^2=m_v,\ \ m_vw=\delta_{w,v}m_v,\ \ m_ve=\delta_{v,s(e)}e\quad (w\in E^0,\ \ e\in E^1).
\end{equation}
Let $C^m(E)$ be the $*$-algebra obtained from $C(E)$ by formally adjoining an element $m_v$ for each $v\in\inf(E)$ subject to the identities \eqref{mv}. We have a canonical $*$-homomorphism
\begin{equation}\label{map:can}
\can:C(E)\to C^m(E).
\end{equation}
Let $\cP=\cP(E)$. For $v\in E^0$, set
\begin{equation}\label{parribajo}
\cP_v=\{\mu \in \path(E) \mid r(\mu) = v \}, \quad \cP^v=\{\mu\in\cP\mid s(\mu)=v\}.
\end{equation}
Let $\Gamma_\cP$ be the ring introduced in \ref{nota:intro}. Using the notation \eqref{parribajo} in the summation indexes, define a $*$-homomorphism
\begin{gather}\label{map:rho}
\rho:C^m(E)\to \Gamma_\cP,\\
\rho(v)=\sum_{\alpha\in\cP^v}\epsilon_{\alpha,\alpha},\quad \rho(e)=\sum_{\alpha\in \cP^{r(e)}}\epsilon_{e\alpha,\alpha},\quad (v\in E^0, e\in E^1)\nonumber\\
\rho(m_w)=\sum_{\alpha\in\cP^{w},|\alpha|\ge 1}\epsilon_{\alpha,\alpha}\quad (w\in\inf(E)).\nonumber
\end{gather}
\begin{lem}\label{lem:rhomono}
The maps \eqref{map:can} and \eqref{map:rho} are monomorphisms.
\end{lem}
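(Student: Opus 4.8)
The plan is to show injectivity of $\can$ and of $\rho$ separately, and in fact it suffices to prove that the composite $\rho\circ\can : C(E)\to\Gamma_\cP$ is injective, since this immediately forces $\can$ to be a monomorphism; injectivity of $\rho$ on $C^m(E)$ requires a little more care because $C^m(E)$ has the extra generators $m_w$ for $w\in\inf(E)$. So the argument splits naturally into two tasks: (a) produce a spanning set of $C^m(E)$ by ``normal form'' monomials, and (b) check that $\rho$ sends this spanning set to an $\ell$-linearly independent family of elements of $\Gamma_\cP$.

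First I would recall the standard normal form for Cohn path algebras: every element of $C(E)$ is an $\ell$-linear combination of monomials of the form $\alpha\beta^*$ with $\alpha,\beta\in\path(E)$ and $r(\alpha)=r(\beta)$, where $\beta^*=f_n^*\cdots f_1^*$ for $\beta=f_1\cdots f_n$. For $C^m(E)$ one has to enlarge this: using the relations \eqref{mv}, any monomial in the generators $E^0\cup E^1\cup\{e^*\}\cup\{m_w:w\in\inf(E)\}$ can be rewritten, by pushing the $m_w$'s toward the middle and using $m_ve=\delta_{v,s(e)}e$, $m_ve^*=0$ unless... — more precisely, $m_w$ absorbs into an adjacent edge unless it sits at a ``turning point'' $\alpha\beta^*$ with $s(\alpha)=s(\beta)=w$, where it contributes a factor $\alpha m_w\beta^*$ that cannot be simplified further. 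Thus a spanning set for $C^m(E)$ consists of the $\alpha\beta^*$ (with $r(\alpha)=r(\beta)$) together with the $\alpha m_w\beta^*$ (with $s(\alpha)=s(\beta)=w\in\inf(E)$, $r(\alpha)=r(\beta)$). I would establish this spanning statement by a direct induction on word length, invoking only the defining relations (V), (E1), (E2), (CK1) and \eqref{mv}.

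Next I would compute $\rho$ on these monomials. Unwinding the definition \eqref{map:rho}, one gets, for $\alpha\beta^*$ with $r(\alpha)=r(\beta)=v$,
\[
\rho(\alpha\beta^*)=\sum_{\gamma\in\cP^v}\epsilon_{\alpha\gamma,\beta\gamma},
\]
and similarly $\rho(\alpha m_w\beta^*)=\sum_{\gamma\in\cP^w,\,|\gamma|\ge 1}\epsilon_{\alpha\gamma,\beta\gamma}$. The point is that the index pairs $(\alpha\gamma,\beta\gamma)$ appearing here determine the pair $(\alpha,\beta)$ (respectively $(\alpha,\beta,w)$): given a matrix unit $\epsilon_{\mu,\nu}$ occurring in $\rho$ of a monomial, one recovers $\alpha$ and $\beta$ as the prefixes of $\mu$ and $\nu$ of the appropriate lengths. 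Hence distinct monomials from the spanning set have $\rho$-images supported on disjoint sets of matrix units, so a nontrivial $\ell$-linear relation among the monomials would force a nontrivial relation among disjointly supported elements of $\Gamma_\cP$, which is impossible. This simultaneously shows that the spanning set is in fact a basis and that $\rho$ (hence $\can$) is injective.

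The main obstacle I anticipate is bookkeeping in the normal-form reduction for $C^m(E)$, specifically verifying that no further collapses occur among the $\alpha m_w\beta^*$ and that these do not coincide with any $\alpha'\beta'^*$ — one must be careful that $m_w$ with $w\in\inf(E)$ is genuinely new data and not already expressible via the original generators. Once the two families of monomials are seen to have disjoint $\rho$-supports (the $m_w$-monomials land on matrix units indexed by paths of length $\ge 1$ emanating from infinite emitters, in a way that cannot be mimicked by a plain $\alpha\beta^*$ with the same endpoints unless one checks the length constraint carefully), the linear independence is automatic. A secondary, more routine point is checking that $\rho$ is well defined, i.e. respects all the relations — this is a direct verification on generators using the definition of multiplication in $\Gamma_\cP$.
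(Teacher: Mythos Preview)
Your overall strategy --- find a spanning set of normal-form monomials for $C^m(E)$ and then show $\rho$ separates them --- is exactly the paper's, and your formulas
\[
\rho(\alpha\beta^*)=\sum_{\gamma\in\cP^{v}}\epsilon_{\alpha\gamma,\beta\gamma},\qquad
\rho(\alpha m_w\beta^*)=\sum_{\gamma\in\cP^{w},\,|\gamma|\ge1}\epsilon_{\alpha\gamma,\beta\gamma}
\]
are correct (with the caveat that the condition on the $m_w$-monomials should read $r(\alpha)=r(\beta)=w$, not $s(\alpha)=s(\beta)=w$, which you do treat correctly in the computation). However, the key step of your linear-independence argument is wrong: the supports of $\rho$ of distinct monomials are \emph{not} disjoint. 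For example, if $e\in E^1$ with $s(e)=v$, then
\[
\rho(v)=\sum_{\gamma\in\cP^{v}}\epsilon_{\gamma,\gamma},\qquad
\rho(ee^*)=\sum_{\gamma\in\cP^{r(e)}}\epsilon_{e\gamma,e\gamma},
\]
and the second support is contained in the first. More generally, whenever $(\alpha',\beta')=(\alpha\theta,\beta\theta)$ for some nonempty path $\theta$, the support of $\rho(\alpha'\beta'^*)$ sits inside that of $\rho(\alpha\beta^*)$. Your remark that from $\epsilon_{\mu,\nu}$ one recovers $\alpha,\beta$ as ``prefixes of the appropriate lengths'' presupposes knowledge of those lengths, which is exactly the ambiguity in play.

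The paper resolves this by introducing the partial order on pairs $(\alpha,\beta)$ with $r(\alpha)=r(\beta)$ given by $(\alpha,\beta)\ge(\alpha\theta,\beta\theta)$, and arguing via a maximal element: in any finite linear relation, choose a monomial whose underlying pair $(\alpha,\beta)$ is maximal. If that monomial is $\alpha\beta^*$, then the $(\alpha,\beta)$-entry of its image under $\rho$ equals $1$ while every other monomial in the relation contributes $0$ at that entry, forcing its coefficient to vanish. If instead only $\alpha m_w\beta^*$ occurs (with $w=r(\alpha)\in\inf(E)$), one uses that $s^{-1}(w)$ is infinite to find an edge $e$ with $(\alpha e,\beta e)$ not the index-pair of any other monomial in the relation. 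Your argument can be repaired along exactly these lines; the ``disjoint supports'' shortcut does not go through.
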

\begin{proof} It is well-known that the set 
\[
\cB_1=\{\alpha\beta^*\mid \alpha,\beta\in\cP, r(\alpha)=r(\beta)\}
\]
is a basis of $C(E)$ (\cite{libro}*{Proposition 1.5.6}). Set
\[
\cB_2=\{\alpha m_v\beta^*\mid \alpha,\beta\in \cP_v, v\in\inf(E)\}.
\]
It follows from \eqref{mv} that $\cB=\cB_1\cup\cB_2$ generates $C^m(E)$ as an $\ell$-module. It is clear that $\rho$ is injective on $\cB$; hence it suffices to show that the set $\rho(\cB)\subset\Gamma_\cP$ is $\ell$-linearly independent. Let $\cF\subset\cB$ be a finite set and $c:\cF\to \ell\setminus\{0\}$ a function such that 
\[
\sum_{x\in\cF}c_x x=0.
\]
Let $Q=\{(\alpha,\beta)\in\cP^2\mid r(\alpha)=r(\beta)\}$; give $Q$ a partial order by setting $(\alpha,\beta)\ge (\alpha',\beta')$ if and only if there exists $\theta\in\cP_{r(\alpha)}$ such that $\alpha'=\alpha\theta$, $\beta'=\beta\theta$. Let $f:\cB\to Q$, $f(\alpha\beta^*)=(\alpha,\beta)$, $f(\alpha m_v\beta^*)=(\alpha,\beta)$. Assume that $\cF\ne\emptyset$. Then $f(\cF)$ has a maximal element $(\alpha,\beta)$. If $\alpha\beta^*\in\cF$, then
$\rho(\alpha\beta^*)$ is the only matrix in $\rho(\cF)$ whose $(\alpha,\beta)$ entry is nonzero. Thus $c_{\alpha\beta^*}=0$, a contradiction. Hence $v=r(\alpha)\in\inf(E)$, $\alpha\beta^*\notin\cF$ and $\alpha m_v\beta^*\in \cF$. Then $f(\cF\setminus\{\alpha m_v\beta^*\})$ contains only finitely many elements of the form $(\alpha e,\beta e)$ with $e\in s^{-1}(v)$. However $\rho(\alpha m_v\beta^*)_{\alpha e,\beta e}=1$ for every $e\in s^{-1}(v)$. Thus $c_{\alpha m_v\beta^*}=0$, which again is a contradiction. Hence $\cF$ must be empty; this concludes the proof. 
\end{proof}

\begin{rem} By Lemma \ref{lem:rhomono} we may identify  $C^m(E)$ with its image in $\Gamma_\cP$. Under this identification, the formula
\[
m_v= \sum_{e \in s^{-1}(v)  } ee^* 
\]
holds for every $v\in E^0$. 
\end{rem}

\noindent{\it Proof of Theorem \ref{thm:ftg}, part I}.  
 Set 
\begin{equation}\label{qv}
C^m(E)\owns q_v=v-m_v\quad (v\in E^0).
\end{equation}  
Consider the following ideals of $C^m(E)$ 
\begin{equation}\label{losck}
\cK(E)=\langle q_v \mid  v \in \reg(E) \rangle \subset\hat{\cK}(E)=\langle q_v  \mid v \in E^0\rangle. 
\end{equation}
One checks, using \cite{libro}*{Proposition 1.5.11} that the maps 
\[
M_{\cP_v}\to \hat{\cK}(E),\quad \epsilon_{\alpha,\beta}\mapsto \alpha q_v\beta^*\quad (v\in E^0)
\]
assemble to an isomorphism 
\begin{equation}\label{map:isokj}
\bigoplus_{v\in E^0} M_{\cP_v}\iso \hat{\cK}(E).
\end{equation} 
Observe that \eqref{map:isokj} restricts to an isomorphism
\begin{equation}\label{map:isok}
\bigoplus_{v\in\reg(E)} M_{\cP_v}\iso\cK(E). 
\end{equation}
Let $\hat{\iota} : \ell^{(E^0)} \to \hat{\cK}(E)$ be the homomorphism that sends the canonical basis element $\chi_v$ to $q_v$ and let $\xi:C(E)\to C^m(E)$ be the $*$-homomorphism determined by
\[
    \xi(v)=m_v,\quad \xi(e)=em_{r(e)}.
\]
One checks that $(\can ,\xi)$ is a quasi-homomorphism $C(E) \to C^m(E) \rhd \hat{\cK}(E)$. From the equality $ \can\varphi = \xi \varphi + \hat{\iota}$ and items (1), (3), (4) and (5) of Proposition \ref{prop:q-h}, it follows that 
\[
j(\can,\xi) j(\varphi) = j(\can\varphi ,\xi \varphi)= j(\xi \varphi + \hat{\iota} ,\xi \varphi) = j( \xi \varphi, \xi \varphi) + j(\hat{\iota},0 ) = j(\hat{\iota}).
\]
By Proposition \ref{prop:sumaiotas}, $\hat{\iota}$ is an isomorphism in $kk$. Hence
\[
j(\hat{\iota})^{-1} j(\can,\xi) j(\varphi) = 1_{j(\ell^{(E^0)})}.
\] 
It remains to show that 
\begin{equation}\label{vueltacohn}
j(\varphi)j(\hat{\iota})^{-1} j(\can,\xi)  = 1_{j(C(E))}.
\end{equation}
Let $\cP=\cP(E)$; consider the algebra $M_\cP$ of finite matrices indexed by $\cP$. Let $\hat{\varphi}:\hat{\cK}(E)\to M_{\cP}C(E)$ be the homomorphism that sends  $\alpha q_v \beta^\ast$ to $ \epsilon_{\alpha,\beta} \otimes v$, where $\epsilon_{\alpha,\beta}$ is the matrix unit. We shall need a twisted version $\hat{\iota}_\tau$ of $\hat{\iota}$; this is the $*$-homomorphism
\begin{equation}\label{map:iotatau}
\hat{\iota}_\tau : C(E)\to  M_{\cP}C(E),\quad \hat{\iota}_\tau(v)=\epsilon_{v,v},\quad \hat{\iota}_\tau(e)=\epsilon_{s(e),r(e)} \otimes e
\qquad (v\in E^0,e\in E^1).
\end{equation}
We have a commutative diagram 
\begin{equation}\label{diag:primero}
\xymatrix{ \ell^{(E^0)} \ar[d]_{\varphi}\ar[r]^{\hat{\iota}}& \hat{\cK}(E)\ar[d]^{\hat{\varphi}}\\
 C(E) \ar[r]_{\hat{\iota}_\tau}& M_{\cP}C(E)}
\end{equation}

\begin{lema}\label{lem:sameiso} Let $\alpha\in \cP$ and let $\iota_\alpha:C(E) \to M_{\cP}C(E) $ as in \eqref{map:iotas}. Then $\iota_\alpha$  and the map $\hat{\iota}_\tau$ of \eqref{map:iotatau} induce the same isomorphism in $kk$.
\end{lema}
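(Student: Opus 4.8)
The statement to prove is that the corner inclusion $\iota_\alpha\colon C(E)\to M_\cP C(E)$ at any single path $\alpha$ and the ``twisted'' inclusion $\hat\iota_\tau$ of \eqref{map:iotatau} become the same isomorphism in $kk$. Since $kk$ is $E$-stable and $\cP=\cP(E)$ has cardinality at most $\#(E^0\sqcup E^1\sqcup\N)$, the ordinary corner inclusion $\iota_\alpha$ is already an isomorphism in $kk$ and is independent of $\alpha$ by the discussion after \eqref{map:kkkh}; so it suffices to produce \emph{one} convenient $\alpha$ and compare $\hat\iota_\tau$ with $\iota_\alpha$ for that choice. The natural candidate is to take $\alpha$ ranging over vertices, and more precisely to exhibit $\hat\iota_\tau$ as a conjugate of a corner inclusion by a ``permutation-type'' element of $\Gamma_\cP$, so that Lemma \ref{lem:biye=id} (or directly \cite{www}*{Proposition 2.2.6}) applies.

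**Key steps.** First I would observe that $\hat\iota_\tau$ factors through $\hat\iota_\tau(v)=\epsilon_{v,v}$, $\hat\iota_\tau(e)=\epsilon_{s(e),r(e)}\otimes e$, so its image lands in the ``diagonal-vertex block'' $\bigoplus_{v\in E^0}\epsilon_{v,v}M_\cP C(E)\epsilon_{v,v}$ only after accounting for the off-diagonal contributions of edges; the cleaner route is to note that $\hat\iota_\tau$ is precisely the composite of $\iota_{r}\colon C(E)\to M_{E^0}C(E)$ (the inclusion $\iota_\ju$ of Lemma \ref{lem:homomat} along the range map $r\colon \{*\}\to E^0$, which is homotopic to a corner inclusion by Lemma \ref{lem:homomat}) followed by a further conjugation inside $M_\cP C(E)$ by an invertible matrix built from the bijections $\cP^v\times\{v\}\to\{v\}\times\cP^v$, exactly as in the proof of Proposition \ref{prop:sumaiotas}. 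Concretely: define the element $u\in\Gamma_\cP$ (or rather in $\Gamma_\cP\otimes\tilde{C(E)}$) that encodes, for each path $\mu=e_1\cdots e_n$, the passage between the matrix coordinate $\mu$ and the ``last vertex'' coordinate, and check that $u$ satisfies $u^*u=1$ and that conjugation by $u$ carries a corner inclusion to $\hat\iota_\tau$. Then invoke \cite{www}*{Proposition 2.2.6} to conclude that $j(\hat\iota_\tau)=j(\iota_\alpha)$. Alternatively, and perhaps most transparently, one checks directly that the diagram \eqref{diag:primero} together with the already-known isomorphisms $j(\varphi)$ (to be proven, but the relevant half $j(\hat\iota)^{-1}j(\can,\xi)j(\varphi)=1$ is in hand), $j(\hat\iota)$ (Proposition \ref{prop:sumaiotas}), $j(\hat\varphi)$ and $j(\iota_\alpha)$ forces $j(\hat\iota_\tau)$ to be the prescribed composite; so the lemma reduces to identifying $j(\hat\varphi)$ with the canonical corner inclusion under \eqref{map:isokj}.

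**Carrying it out.** I would therefore: (i) unwind the definition of $\hat\varphi\colon\hat{\cK}(E)\to M_\cP C(E)$, $\alpha q_v\beta^*\mapsto \epsilon_{\alpha,\beta}\otimes v$, and identify it, via the isomorphism \eqref{map:isokj} $\bigoplus_{v\in E^0}M_{\cP_v}\iso\hat{\cK}(E)$, with the block-diagonal inclusion $\bigoplus_v M_{\cP_v}\hookrightarrow M_\cP C(E)$ sending $\epsilon_{\alpha,\beta}$ (in the $v$-block) to $\epsilon_{\alpha,\beta}\otimes v$; (ii) apply Lemma \ref{lem:homomat} and Lemma \ref{lem:biye=id} to see that this block inclusion is, in $kk$, the canonical corner inclusion $\iota$, independently of the indexing; (iii) combine with the commutativity of \eqref{diag:primero} in $\aha$ (hence in $kk$), and with $j(\hat\iota)$ being invertible, to get $j(\hat\iota_\tau)\circ j(\varphi)=j(\hat\varphi)\circ j(\hat\iota)=\iota\circ j(\hat\iota)$, and then postcompose with the relation $j(\hat\iota)^{-1}j(\can,\xi)j(\varphi)=1$ from part I to express $j(\hat\iota_\tau)$ alone; (iv) check that the resulting endomorphism equals $j(\iota_\alpha)$, using once more Lemmas \ref{lem:homomat}--\ref{lem:biye=id} to absorb the combinatorial reindexing.

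**Main obstacle.** The only real work is the bookkeeping in step (i)--(ii): one must verify that the isomorphism \eqref{map:isokj} intertwines $\hat\varphi$ with an honest block-diagonal \emph{corner}-type inclusion (not merely a homomorphism of the right shape), and that the index sets $\cP_v$ assemble, after the edge-twist in $\hat\iota_\tau$, into a reindexing of $\cP$ covered by an injection $\sigma$ to which Lemma \ref{lem:biye=id} applies. This is purely combinatorial — matching the coordinate $\mu\in\cP$ with the pair (its range vertex $v$, the induced element of $\cP_v$) — but it is where an error could hide, so I would write that identification carefully and only then quote \cite{www}*{Proposition 2.2.6} / Lemma \ref{lem:biye=id} to finish.
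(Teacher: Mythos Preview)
Your proposal has a genuine circularity gap. In your ``alternative'' route (step (iii)), you want to pass from the commutative square \eqref{diag:primero}
\[
j(\hat\iota_\tau)\circ j(\varphi)=j(\hat\varphi)\circ j(\hat\iota)
\]
to a statement about $j(\hat\iota_\tau)$ alone, by ``postcomposing with the relation $j(\hat\iota)^{-1}j(\can,\xi)j(\varphi)=1$ from part I''. But that relation only gives you a \emph{left} inverse of $j(\varphi)$. To cancel $j(\varphi)$ on the right you would need it to be an epimorphism in $kk$, which is exactly what Theorem \ref{thm:ftg} is trying to establish and is precisely what this lemma is used for later (``Since $j(\hat\iota_\tau)$ is a monomorphism\ldots''). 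So this path is circular.

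Your first route also does not go through as written. Lemma \ref{lem:homomat} applies to an algebra that is a direct sum $A=\bigoplus_i A_i$ of \emph{algebras}, sending each summand to its own diagonal corner. The Cohn algebra $C(E)$ has an $\ell$-module decomposition $C(E)=\bigoplus_{v,w}vC(E)w$, but these pieces are not subalgebras and $\hat\iota_\tau$ sends edges to \emph{off}-diagonal positions $\epsilon_{s(e),r(e)}\otimes e$, so there is no factoring through an $\iota_\ju$ of the type in Lemma \ref{lem:homomat}. The conjugation idea you sketch is closer to the truth: one has the formal identity $\hat\iota_\tau(a)=U\,\iota_w(a)\,U^*$ with $U=\sum_{v\in E^0}\epsilon_{v,w}\otimes v$, and for finite $E^0$ this lies in $\Gamma_\cP(\widetilde{C(E)})$ and Lemma \ref{lem:conju1} finishes the job. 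But when $E^0$ is infinite the $w$-column of $U$ has infinite support, so $U\notin\Gamma_\cP(\widetilde{C(E)})$ and the global conjugation is not available.

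The paper sidesteps both issues by writing down an explicit elementary homotopy $H\colon C(E)\to M_\cP C(E)[t]$ between $\iota_w$ and $\hat\iota_\tau$, using the $2\times 2$ rotation matrices $A_v,B_v$ from Lemma \ref{lem:iotas} at each vertex $v$. Because $H$ is defined on generators and each $A_v,B_v$ involves only the four matrix units supported on $\{v,w\}$, no infinite sums appear and the argument works for arbitrary $E^0$. If you want to salvage your approach, the honest fix is to replace the global conjugation by exactly this vertex-by-vertex rotation; the diagram-chasing alternative cannot be made to work before $j(\varphi)$ is known to be invertible.
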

\begin{proof}
Because $j$ is $E$-stable, it is $M_\cP$-stable, whence $\iota_\alpha$ is an isomorphism and does not depend on $\alpha$. Hence we may and do assume that $\alpha=w\in E^0$. Because $j$ is homotopy invariant, it is enough to find a polynomial homotopy between $\iota_w$ and $\hat{\iota}_\tau$. For each $v \in E^0 \backslash \{w\}$ set
\begin{align*}
A_v =& [(1-t^2)\epsilon_{w,w}+ (t^3-2t) \epsilon_{w,v} + t \epsilon_{v,w}+ (1-t^2) \epsilon_{v,v} ] \otimes v,\\
B_v =& [(1-t^2)\epsilon_{w,w}+ (2t-t^3) \epsilon_{w,v} - t \epsilon_{v,w}+ (1-t^2) \epsilon_{v,v} ] \otimes v, \quad A_w = \epsilon_{w,w} \otimes w = B_w. 
\end{align*}
The desired homotopy is the homomorphism $H:C(E)\to M_\cP C(E)[t]$ defined by 
\[
H(v)=A_v (\epsilon_{v,v} \otimes v) B_v,\quad H(e)=A_{s(e)}(\epsilon_{s(e),r(e)}\otimes e)B_{r(e)},\quad H(e^*)=A_{r(e)}(\epsilon_{r(e),s(e)}\otimes e^*)B_{s(e)}.
\]
\end{proof}

\noindent{\it Proof of Theorem \ref{thm:ftg}, part II}. Let 
\[
M_{\cP}C(E)\supset \fA=\mspan\{\epsilon_{\gamma, \delta} \otimes \alpha \beta^\ast \mid s(\alpha) = r(\gamma), s(\beta) = r(\delta), r(\alpha) = r(\beta)\}.
\]
One checks that $\fA$ is a subalgebra containing the images of both $\hat{\iota}_\tau$ and $\hat{\varphi}$. From the commutative diagram \ref{diag:primero} we obtain, by corestriction, another commutative diagram
\begin{equation}\label{diag:segundo}
\xymatrix{ \ell^{(E^0)} \ar[d]_{\varphi}\ar[r]^{\hat{\iota}}& \hat{\cK}(E)\ar[d]^{\hat{\varphi}}\\
 C(E) \ar[r] & \fA}
\end{equation}
By Lemma \ref{lem:sameiso}, the bottom arrow of \eqref{diag:segundo} is a monomorphism in $kk$. We shall abuse notation and write $\hat{\iota}_\tau$ for the latter map. 

Let $\widetilde{C}^m(E)$ be the unitalization; put $R=\Gamma_{\cP}\widetilde{C}^m(E)$. Consider the homomorphism $\rho'=\rho\otimes 1:C(E)\to R$. One checks that the subalgebra $\fA\subset R$ is closed under both left and right multiplication by elements in the image of $\rho'$. We can thus form the semi-direct product $C^m(E) \ltimes \fA=C^m(E) \ltimes_{\rho'} \fA$. As an $\ell$-module, 
 $C^m(E) \ltimes \fA$ is just $C^m(E) \oplus \fA$. Multiplication is defined by the rule 
$$ (r,x) \cdot (s,y) = (rs, \rho'(r)x+y\rho'(s)+xy).$$
Let $J$ be the ideal in $C^m(E) \ltimes \fA$ generated by the elements $(\alpha q_v \beta^\ast, -\epsilon_{\alpha,\beta}\otimes v)$ with $v = r(\alpha) = r(\beta)$. One checks that 
\[
J =\mspan\{(\alpha q_v \beta^\ast, -\epsilon_{\alpha,\beta}\otimes v) \ : \ v = r(\alpha) = r(\beta) \}.
\] 
Set 
\[
D=(C^m(E) \ltimes \fA)/J.
\]

\begin{lem}\label{lem:Ideal} The composite of the inclusion and projection maps $\fA=0\rtimes \fA\subset C^m(E)\ltimes \fA\to D$ is injective.
\end{lem}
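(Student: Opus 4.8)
The plan is to reduce the statement to the claim that $\fA\cap J=0$ as $\ell$-submodules of $C^m(E)\ltimes\fA$: the kernel of the composite $0\rtimes\fA\hookrightarrow C^m(E)\ltimes\fA\to D$ is precisely this intersection, so injectivity of the composite is equivalent to $\fA\cap J=0$.

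To compute the intersection I would use the explicit $\ell$-module description of $J$ recorded just above, writing an element $z\in\fA\cap J$ as a finite combination $z=\sum_i c_i\,(\alpha_i q_{v_i}\beta_i^{\ast},\,-\epsilon_{\alpha_i,\beta_i}\otimes v_i)$ with $c_i\in\ell$ and $v_i=r(\alpha_i)=r(\beta_i)$. Membership of $z$ in $0\rtimes\fA$ forces the first coordinate to vanish, i.e. $\sum_i c_i\,\alpha_i q_{v_i}\beta_i^{\ast}=0$ in $C^m(E)$, hence in $\hat{\cK}(E)$.

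The decisive step is then to push this relation forward along the map $\hat{\varphi}\colon\hat{\cK}(E)\to M_{\cP}C(E)$, $\alpha q_v\beta^{\ast}\mapsto\epsilon_{\alpha,\beta}\otimes v$. By the isomorphism \eqref{map:isokj}, the elements $\alpha q_v\beta^{\ast}$ ($v\in E^0$, $\alpha,\beta\in\cP_v$) form an $\ell$-basis of $\hat{\cK}(E)$; and since the vertex $v=r(\alpha)$ is recoverable from the index $\alpha$, their images $\epsilon_{\alpha,\beta}\otimes v$ are pairwise distinct members of the standard $\ell$-basis of $M_{\cP}C(E)$, namely the tensors $\epsilon_{\gamma,\delta}\otimes b$ with $\gamma,\delta\in\cP$ and $b$ in the path basis $\cB_1$ of $C(E)$. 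Hence $\hat{\varphi}$ is injective. Applying it to $\sum_i c_i\,\alpha_i q_{v_i}\beta_i^{\ast}=0$ gives $\sum_i c_i\,\epsilon_{\alpha_i,\beta_i}\otimes v_i=0$, so the second coordinate of $z$ vanishes as well and $z=0$. I do not expect a genuine obstacle here: the only point requiring a little care is the bookkeeping that makes $\hat{\varphi}$ visibly injective — essentially the observation that summands attached to different vertices $v$ cannot cancel — and this is already packaged into \eqref{map:isokj}.
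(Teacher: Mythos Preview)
Your proof is correct and follows the same idea as the paper's: both reduce to $J\cap(0\rtimes\fA)=0$ and use that the second coordinate of any element of $J$ is determined by the first via the map $\alpha q_v\beta^*\mapsto\epsilon_{\alpha,\beta}\otimes v$. The paper packages this by noting $J=\mathrm{image}(\inc\rtimes(-\ju))$ so that the first projection is injective on $J$; you unwind the same computation elementwise. One small remark: your argument for the injectivity of $\hat{\varphi}$ is not actually needed --- to push the relation $\sum_i c_i\,\alpha_i q_{v_i}\beta_i^*=0$ forward you only use that $\hat{\varphi}$ is a well-defined $\ell$-linear map, which is already established.
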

\begin{proof}
It follows from \eqref{map:isokj} that there is an injective homomorphism 
\[
\ju:\hat{\cK}(E)\to \fA,\quad \ju(\alpha q_v\beta^*)=\epsilon_{\alpha,\beta}\otimes v\quad (r(\alpha)=r(\beta)=v).
\] 
Let $\inc:\hat{\cK}(E)\to C^m(E)$ be the inclusion. Observe that $J$ is the image of the map $\inc\rtimes(-\ju):\hat{\cK}(E)\to C^m(E)\rtimes \fA$. In particular, the projection $\pi:C^m(E)\ltimes \fA\to C^m(E)$ is injective on $J$. It follows that $J\cap (0\rtimes \fA)=0$; this finishes the proof. 
\end{proof}

\noindent{\it Proof of Theorem \ref{thm:ftg}, part III.} By Lemma \ref{lem:Ideal}, we may regard $\fA$ as an ideal of $D$. Let $\Upsilon:C^m(E)\to D$ be the composite of the inclusion $C^m(E)\subset C^m(E)\rtimes \fA$ and the projection $C^m(E)\rtimes \fA\to D$. We may embed diagram \eqref{diag:segundo} into a commutative diagram

\begin{equation}\label{diag:tercero}
\xymatrix{ \ell^{(E^0)} \ar[d]_{\varphi}\ar[r]^{\hat{\iota}}& \hat{\cK}(E)\ar[d]^{\hat{\varphi}}\ar[r]& C^m(E) \ar[d]^{\Upsilon}\\
 C(E) \ar[r]^{\hat{\iota}_\tau} & \fA \ar[r] & D}
\end{equation}

Let $\psi_0=\Upsilon\can$, $\psi_1 = \Upsilon\xi$. Note that $\psi_1 \perp \hat{\iota}_\tau$, so $\psi_{1/2} = \psi_1 + \hat{\iota}_\tau$ is an algebra homomorphism. We have quasi-homomorphisms 
\[
(\psi_0,\psi_1) ,(\psi_0,\psi_{1/2}),(\psi_{1/2},\psi_1) : C(E) \to D \rhd \fA.
\]

\begin{lema}\label{lem:pepe}
The quasi-homorphism $(\psi_0,\psi_{1/2})$ induces the zero map in $kk$.
\end{lema}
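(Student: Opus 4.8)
The plan is to produce a polynomial homotopy of algebra homomorphisms between $\psi_0$ and $\psi_{1/2}$ which is \emph{constant modulo the ideal $\fA$}, and then to quote items (4) and (7) of Proposition \ref{prop:q-h}. The point of the mod‑$\fA$ condition is this: if $H\colon C(E)\to D[t]$ is an algebra homomorphism with $\ev_0 H=\psi_0$, $\ev_1 H=\psi_{1/2}$ and $H(c)\equiv\psi_0(c)\pmod{\fA[t]}$ for every $c$, then $(\psi_0\otimes 1, H)$ is a quasi-homomorphism $C(E)\to D[t]\rhd\fA[t]$ realizing a homotopy from $(\psi_0,\psi_0)$ to $(\psi_0,\psi_{1/2})$, whence $j(\psi_0,\psi_{1/2})=j(\psi_0,\psi_0)=0$.

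To see that such an $H$ exists, I would first record that $\psi_0$ and $\psi_{1/2}$ are two lifts of one and the same homomorphism $C(E)\to D/\fA$: modulo $\fA$ one has $D\cong C^m(E)/\hat{\cK}(E)$, in which $q_v=0$, so $\can$ and $\xi$ agree while $\hat{\iota}_\tau$ vanishes. Writing $\psi_1=\Upsilon\xi$, a direct computation using $q_v=v-m_v$, $e=em_{r(e)}+eq_{r(e)}$ and the identification $\Upsilon|_{\hat{\cK}(E)}=\hat{\varphi}$ built into diagram \eqref{diag:tercero} gives $\psi_0(v)=\psi_{1/2}(v)$ on vertices and, on an edge $e$, $\psi_0(e)=\psi_1(e)+\epsilon_{e,r(e)}\otimes r(e)$ versus $\psi_{1/2}(e)=\psi_1(e)+\epsilon_{s(e),r(e)}\otimes e$. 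Thus $H$ must fix $\psi_1$ and "slide'' the correction term $\epsilon_{e,r(e)}\otimes r(e)$ over to $\epsilon_{s(e),r(e)}\otimes e$, keeping everything constant modulo $\fA$. Here the elements $w_e:=\epsilon_{s(e),e}\otimes e\in\fA$ (with $w_e^*=\epsilon_{e,s(e)}\otimes e^*$) are the tool: using the Cohn relation $e^*e=r(e)$ one checks that the family $\{w_e,\,w_e^*,\,w_ew_e^*=\epsilon_{s(e),s(e)}\otimes ee^*,\,w_e^*w_e=\epsilon_{e,e}\otimes r(e)\}_{e\in E^1}$ is an orthogonal system of $2\times 2$ matrix units, and that $w_e\cdot(\epsilon_{e,r(e)}\otimes r(e))=\epsilon_{s(e),r(e)}\otimes e$. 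I would then build $H$ by conjugating $\psi_0$ by a rotation assembled block by block from the matrix $U(t)$ of Lemma \ref{lem:iotas}, in exactly the sandwiched, vertex/edge-indexed form used in the proof of Lemma \ref{lem:sameiso}, so that $H$ is automatically an algebra homomorphism $C(E)\to D[t]$; arranging the conjugators to be $\equiv 1\pmod{\fA}$ makes $H$ constant modulo $\fA$.

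I expect the main obstacle to be getting the rotation exactly right. One must ensure simultaneously that conjugation leaves the image of $\psi_1$ untouched (this is where one uses the orthogonality $\psi_1\perp\hat{\iota}_\tau$ and the fact that $\xi(C(E))$ lives on paths of length $\ge 1$), that each correction term is transported precisely to the corresponding $\psi_{1/2}$-correction with no leftover contributions, and that the sums over $E^1$ occurring in the conjugator are legitimate: when $E$ has infinite emitters, $\sum_{s(e)=v}w_ew_e^*=\sum_{s(e)=v}ee^*$ is exactly the adjoined element $m_v$, so one should work inside the Wagoner-type multiplier algebra $\Gamma_\cP C^m(E)$ (in which $D$ sits as an ideal), or else reduce to finite $E$ by writing $C(E)$ as the filtered colimit of the Cohn algebras of its finite subgraphs. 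Verifying these compatibilities, together with the Cohn relations for $H$, is the bulk of the argument.
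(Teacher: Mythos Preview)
Your approach is correct and matches the paper's: both build a polynomial homotopy between $\psi_0$ and $\psi_{1/2}$ that is constant modulo $\fA$ (using the entries of the rotation $U(t)$, precisely as you describe) and then apply items (4) and (7) of Proposition \ref{prop:q-h}; the paper merely writes the homotopy $H^+$ explicitly on generators and pairs it with the constant $\psi_{1/2}$ rather than $\psi_0$. Because $H^+$ is defined edge by edge, no infinite sums or multiplier algebras are needed, so your last worry does not arise.
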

\begin{proof}
Let $H^+: C(E) \to D[t]$ be the algebra homorphism determined by setting
\begin{gather*}
    H^+(v)=(v,0),\quad H^+(e)= (e m_{r(e)},0 ) + (1-t^2) (0, \epsilon_{s(e),r(e)} \otimes e) + t (0, \epsilon_{e,r(e)} \otimes r(e))\\
    H^+(e^\ast)=( m_{r(e)} e^\ast ,0 ) + (1-t^2) (0, \epsilon_{r(e),s(e)} \otimes e^\ast) + (2t-t^3)  (0, \epsilon_{r(e),e} \otimes r(e))\end{gather*}
for $v\in E^0$ and $e\in E^1$. It is a matter of calculation to show that $H^+$ a homotopy between $\psi_0$ and $\psi_{1/2}$, and that  $(H^+,\psi_{1/2}) : C(E) \to D[t]\rhd \fA[t]$ is a homotopy between $(\psi_0,\psi_{1/2})$ and $(\psi_{1/2},\psi_{1/2})$. Hence by item (7) of Proposition \ref{prop:q-h}, we obtain
$$ j(\psi_0,\psi_{1/2}) = j(\psi_{1/2},\psi_{1/2})  = 0$$
as wanted.
\end{proof}

\noindent{\it Proof of Theorem \ref{thm:ftg}, conclusion}. Using the commutativity of diagram \eqref{diag:tercero} and items (6), (8) and (1) of Proposition \ref{prop:q-h} and  Lemma \ref{lem:pepe} we have
$$ j(\hat{\varphi}) j(\can,\xi) = j(\psi_0,\psi_1) =j(\psi_0,\psi_{1/2})+j(\psi_{1/2},\psi_1) = j(\hat{\iota}_\tau).  $$
On the other hand
$$  j(\hat{\varphi}) j(\can,\xi)= j(\hat{\iota}_\tau) j(\varphi) j(\hat{\iota})^{-1} j(\can,\xi).$$
Hence
\[j(\hat{\iota}_\tau) = j(\hat{\iota}_\tau)j(\varphi)  j(\hat{\iota})^{-1} j(1,\xi)\]

Since $j(\hat{\iota}_\tau)$ is a monomorphism, 
this implies that $$1_{j(C(E))} =  j(\varphi) j(\hat{\iota})^{-1} j(1,\xi).$$ 

This finishes the proof.\qed

\section{The Leavitt path algebra and a fundamental triangle}\label{sec:fundatri}

Let $E$ be a graph; for $v\in E^0$ let $q_v\in C(E)$ be the element \eqref{qv}. The \emph{Leavitt path algebra} $L(E)$ is the quotient of $C(E)$ modulo the relation 
\[
(CK2)\quad q_v=0 \quad (v\in \reg(E)).
\]
In other words, for the ideal $K(E)\triqui C(E)$ of \eqref{losck}, we have a short exact sequence
\begin{equation}\label{ses:cele}
0\to \cK(E)\to C(E)\to L(E)\to 0.
\end{equation}
It follows from \cite{libro}*{Proposition 1.5.11} that the sequence \eqref{ses:cele} is $\ell$-linearly split, and is thus an algebra extension in the sense of Section \ref{sec:kk}.

The \emph{adjacency matrix} $A'_E \in \Z^{((E^0\setminus\inf(E)) \times E^0)}$ is the matrix whose entries are given by 
\[
(A'_E)_{v,w} = \#\{e \in E^1 \ : \ s(e) = v \text{ and } r(e) = w\}.
\]
The \emph{reduced adjacency matrix} is the matrix $A_E\in\Z^{(\reg(E)) \times E^0)}$ which results from $A_E$ upon removing the rows corresponding to sinks.
We also consider the matrix
\[
I\in \Z^{(E^0\times\reg(E))},\quad I_{v,w}=\delta_{v,w}.
\]
\begin{prop}\label{prop:basictri}
Let $j:\aha\to kk$ be as in Theorem \ref{thm:ftg}. 
\item[i)] There is a distinguished triangle in $kk$
\begin{equation}\label{tri:basic1}
\xymatrix{j(\ell^{(\reg(E))})\ar[r]^f& j(\ell^{(E^0)})\ar[r]& j(L(E)).}
\end{equation}
\item[ii)] Let $\chi_v:\ell\to \ell^{(\reg(E))}$ be the inclusion in the $v$-summand  and let $c_v\in \Z^{(E^0)\times \{v\}}$ be the $v$-column of the matrix
$I-A_E^t$ $(v\in\reg(E))$. Under the isomorphism \eqref{map:kkkh}, the composite 
$f j(\chi_v)$ corresponds to the map
\[
1\otimes c_v:KH_0(\ell)\to KH_0(\ell)\otimes \Z^{(E^0)} 
\] 
\end{prop}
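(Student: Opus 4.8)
The plan is to build on Theorem \ref{thm:ftg} together with the exact sequence \eqref{ses:cele} and the identification \eqref{map:isok} of $\cK(E)$ as $\bigoplus_{v\in\reg(E)}M_{\cP_v}$. Since $j$ is $E$-stable and hence $M_{\cP_v}$-stable for each $v$, Proposition \ref{prop:sumaiotas} applies to the family $\{\cP_v:v\in\reg(E)\}$ and gives an isomorphism $j(\ell^{(\reg(E))})\iso j(\cK(E))$ via $\bigoplus_v\iota_{v_0}$ where $v_0$ is the trivial path at $v$ (note $v\in\cP_v$). Feeding this isomorphism and the isomorphism $j(\varphi):j(\ell^{(E^0)})\iso j(C(E))$ of Theorem \ref{thm:ftg} into the distinguished triangle associated to the extension \eqref{ses:cele}, one obtains a distinguished triangle of the form \eqref{tri:basic1}, with $f$ the composite
\[
j(\ell^{(\reg(E))})\iso j(\cK(E))\to j(C(E))\overset{j(\varphi)^{-1}}{\iso}j(\ell^{(E^0)}),
\]
the middle arrow being induced by the inclusion $\cK(E)\subset C(E)$. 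This proves part i).

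\textbf{Part ii): the main computation.} For part ii) I must identify the composite $fj(\chi_v):j(\ell)\to j(\ell^{(E^0)})$ for $v\in\reg(E)$. Unwinding the definitions, $j(\chi_v)$ followed by the isomorphism $j(\ell^{(\reg(E))})\iso j(\cK(E))$ sends the generator to $j$ applied to the corner inclusion $\ell\to M_{\cP_v}\subset\cK(E)$, $1\mapsto\epsilon_{v,v}=q_v$ (using the identification $\epsilon_{\alpha,\beta}\mapsto \alpha q_v\beta^*$ of \eqref{map:isok}, with $\alpha=\beta=v$). Composing with the inclusion $\cK(E)\hookrightarrow C(E)$ this is the map $\ell\to C(E)$, $1\mapsto q_v=v-m_v$. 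Thus $fj(\chi_v)$ is $j(\varphi)^{-1}$ applied to $j$ of the homomorphism $\ell\to C(E)$, $1\mapsto v-m_v$, where $m_v=\sum_{e\in s^{-1}(v)}ee^*$.

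\textbf{Evaluating $j(\varphi)^{-1}$ on $v-m_v$.} Here the key point is that $\varphi$ identifies $j(\ell^{(E^0)})$ with $j(C(E))$ via $\chi_w\mapsto w$, so one has to express the class of $q_v$ in $j(C(E))$ in terms of the classes of the idempotents $w$, $w\in E^0$. The idempotent $v$ clearly corresponds to $\chi_v$, contributing the $v$-entry $1$ of $c_v$ (the diagonal of $I$). For $m_v=\sum_{e\in s^{-1}(v)}ee^*$, I would argue that in $kk$ the class of the idempotent $ee^*\in C(E)$ equals the class of the idempotent $r(e)^*e^*e=r(e)$ — more precisely, $ee^*$ and $e^*e=r(e)$ are conjugate idempotents (Murray--von Neumann equivalent via the partial isometry $e$: $e^*\cdot ee^*\cdot e=r(e)$ and $e\cdot e^*e\cdot e^*=ee^*$), hence define $M_2$-homotopic (indeed homotopic) corner-type maps $\ell\to C(E)$ by Lemma \ref{lem:conju1}, so $j$ sends them to the same morphism. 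Summing over $e\in s^{-1}(v)$ — a finite set since $v$ is regular — the class of $m_v$ becomes $\sum_{e\in s^{-1}(v)}[r(e)]=\sum_{w\in E^0}(A'_E)_{v,w}[w]=\sum_w (A_E)_{v,w}[w]$, which under $j(\varphi)^{-1}$ is $\sum_w(A_E)_{v,w}\chi_w$. Therefore $j(\varphi)^{-1}j(q_v)=\chi_v-\sum_w(A_E)_{v,w}\chi_w$, i.e.\ the $v$-column of $I-A_E^t$ read into $\ell^{(E^0)}$, which under \eqref{map:kkkh} is exactly $1\otimes c_v$.

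\textbf{Expected obstacle.} The routine-looking but genuinely load-bearing step is the additivity claim: that $[m_v]=\sum_{e\in s^{-1}(v)}[ee^*]$ as morphisms $j(\ell)\to j(C(E))$. The idempotents $ee^*$ for distinct $e\in s^{-1}(v)$ are orthogonal (since $e^*f=\delta_{e,f}r(e)$, so $ee^*ff^*=0$ for $e\ne f$), so the corresponding homomorphisms $\ell\to C(E)$ are orthogonal, and one can use the quasi-homomorphism/orthogonal-sum formalism of Proposition \ref{prop:q-h}(3) — viewing each $a\mapsto ee^*a$ as an orthogonal summand and invoking additivity of $X(\phi,0)=X(\phi)$ over orthogonal sums, exactly as in the computation $\can\varphi=\xi\varphi+\hat\iota$ in the proof of Theorem \ref{thm:ftg} — to conclude $j(1\mapsto m_v)=\sum_e j(1\mapsto ee^*)$. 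Once the bookkeeping of corner inclusions versus quasi-homomorphisms is set up consistently with the conventions used in the proof of Theorem \ref{thm:ftg}, the rest is the elementary idempotent conjugation above and the matrix identity $(A'_E)_{v,w}=(A_E)_{v,w}$ for $v\in\reg(E)$.
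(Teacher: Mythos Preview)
Your proof is correct and follows essentially the same route as the paper: construct the triangle from the extension \eqref{ses:cele} via the isomorphisms $j(q):j(\ell^{(\reg(E))})\iso j(\cK(E))$ (Proposition~\ref{prop:sumaiotas}) and $j(\varphi)$ (Theorem~\ref{thm:ftg}), and then compute $f j(\chi_v)$ by expressing $j(q_v)$ in terms of the $j(w)$ using orthogonal additivity and the equivalence $ee^*\sim r(e)$. The paper phrases the subtraction step as $j(v)=j(q_v)+j(m_v)$ from $q_v\perp m_v$ rather than writing $j(q_v)=j(v)-j(m_v)$ directly, but this is the same orthogonal-sum argument you already invoke for the $ee^*$.
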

\begin{proof} Consider the map $q:\ell^{(\reg(E))}\to \cK(E)$, $q(\chi_v)=q_v$. In view of \eqref{map:isok}, $j(q)$ is an isomorphism by Proposition \ref{prop:sumaiotas}. By Theorem \ref{thm:ftg}, the map $j(\phi)$ is an isomorphism. Hence the $kk$-triangle induced by \eqref{ses:cele} is isomorphic to the triangle \eqref{tri:basic1} where for the inclusion $\inc:\cK(E)\subset C(E)$, we have $f=j(\phi)^{-1}j(\inc)j(q)$. This proves i). To prove ii), fix $v\in \reg(E)$ and consider the elements $q_v, m_v$ and $ee^*\in C(E)$ ($e\in E^1$, $s(e)=v$). As the latter elements are idempotent, we regard them as homomorphisms $\ell\to C(E)$. In particular, $q_v=\inc q\chi_v$. Because $q_v\perp m_v$ and $v=q_v+m_v$, $j(q_v)=j(v)-j(m_v)$. On the other hand, by (CK1), $j(m_v)=\sum_{s(e)=v} j(r(e))$. Summing up, $q_v=j(v)-\sum_{s(e)=v} j(r(e))$; this proves ii).
\end{proof}

\begin{thm}\label{thm:basictrix}
Let $X:\aha\to\cT$ be an excisive, homotopy invariant, $E$-stable and $E^0$-additive homology theory and let $R\in\aha$. Then \eqref{tri:basic1} induces a triangle in $\cT$
\[
\xymatrix{X(R)^{(\reg(E))}\ar[r]^{I-A_E^t}& X(R)^{(E^0)}\ar[r]& X(L(E)\otimes R)}.
\]
\end{thm}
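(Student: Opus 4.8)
The plan is to combine the fundamental triangle of Proposition \ref{prop:basictri} with the tensor-product functoriality of $kk$ from Example \ref{ex:tenso}, and then transport the resulting triangle across the functor $X$. First I would tensor the distinguished triangle \eqref{tri:basic1} with the algebra $R$: by Example \ref{ex:tenso}, the functor $j((-)\otimes R)\colon\aha\to kk$ is again homotopy invariant, excisive and $M_S$-stable, hence factors through a triangulated endofunctor of $kk$, and triangulated functors preserve distinguished triangles. Since $\ell^{(\reg(E))}\otimes R\cong R^{(\reg(E))}$ and $\ell^{(E^0)}\otimes R\cong R^{(E^0)}$ (the direct sums here are finite when $E^0$ is finite, but in general one uses that tensor product commutes with arbitrary direct sums), this yields a distinguished triangle
\[
\xymatrix{j(R^{(\reg(E))})\ar[r]^{f\otimes R}& j(R^{(E^0)})\ar[r]& j(L(E)\otimes R)}
\]
in $kk$.

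Next I would identify the first map $f\otimes R$ explicitly. By Proposition \ref{prop:basictri}(ii), the composite $f\,j(\chi_v)$ is, under the identification \eqref{map:kkkh}, the map $1\otimes c_v$ where $c_v$ is the $v$-column of $I-A_E^t$; more structurally, $f$ is the $kk$-morphism $j(\ell^{(\reg(E))})\to j(\ell^{(E^0)})$ whose component from the $v$-th summand to the $w$-th summand is multiplication by the $(w,v)$-entry of $I-A_E^t$. Tensoring with $R$ and applying $j$, the morphism $f\otimes R$ has, as its component from the $v$-summand of $R^{(\reg(E))}$ to the $w$-summand of $R^{(E^0)}$, the self-map of $j(R)$ given by multiplication by $(I-A_E^t)_{w,v}\in\Z$, i.e. the sum of that many copies of the identity (or its negative). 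In other words $f\otimes R$ is, up to the canonical identifications, exactly the morphism denoted $I-A_E^t$ in the statement.

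Now I would apply the triangulated functor $G\colon kk\to\cT$ induced by the universal property: since $X$ is excisive, homotopy invariant and $E$-stable, it factors as $X=G\circ j$ for a triangulated functor $G$, so $G$ sends the displayed distinguished triangle in $kk$ to a distinguished triangle in $\cT$. This gives
\[
\xymatrix{X(R^{(\reg(E))})\ar[r]& X(R^{(E^0)})\ar[r]& X(L(E)\otimes R)}.
\]
Here is where $E^0$-additivity enters: the canonical maps $X(R)^{(\reg(E))}\to X(R^{(\reg(E))})$ and $X(R)^{(E^0)}\to X(R^{(E^0)})$ are isomorphisms because $\#\reg(E),\#E^0\le\#E^0$ (and $\cT$ has direct sums of that cardinality, on which $X$ is additive). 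Transporting the triangle along these isomorphisms replaces the first two terms by $X(R)^{(\reg(E))}$ and $X(R)^{(E^0)}$, and the connecting map becomes the $\cT$-morphism assembled from multiplication by the integer entries of $I-A_E^t$ — which is precisely the map labelled $I-A_E^t$ in the statement. This completes the proof.

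The main obstacle, and the only point requiring genuine care, is the identification of the first map as literally $I-A_E^t$: Proposition \ref{prop:basictri}(ii) only records the composites $f\,j(\chi_v)$ with the summand-inclusions and only after identifying $kk_*(\ell,\ell)$ with $KH_*(\ell)$ in degree zero. One must check that knowing all these composites (together with the fact that $\chi_v$ generate $\ell^{(\reg(E))}$ as a direct sum, so that a $kk$-morphism out of it is determined by its restrictions to the summands) pins down $f$ itself, and then that tensoring with $R$ and applying $G$ commutes with forming these matrix-indexed direct-sum decompositions in the expected way — so that the abstract connecting morphism $X(f\otimes R)$ coincides, under the additivity isomorphisms, with the concrete matrix $I-A_E^t$ acting on $X(R)^{(\reg(E))}$. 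This is bookkeeping about how $j$ and $G$ interact with finite (or $\#E^0$-indexed) biproducts, and is routine given that both $j$ and $G$ are additive on such families; I would spell it out component-wise but not belabour it.
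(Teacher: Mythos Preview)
Your proof is correct and follows essentially the same route as the paper: tensor the fundamental triangle \eqref{tri:basic1} by $R$ via Example \ref{ex:tenso}, push it through the triangulated functor $kk\to\cT$ coming from the universal property of $j$, and then invoke Proposition \ref{prop:basictri}(ii) together with $E^0$-additivity to identify the first map with $I-A_E^t$. The paper's proof is a three-line sketch of exactly these steps; your version spells out the bookkeeping (in particular the componentwise identification of $f\otimes R$) that the paper leaves implicit.
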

\begin{proof} Tensoring the triangle \eqref{tri:basic1} by $R$ yields another triangle in $kk$, by Example \ref{ex:tenso}. By the universal property of $j$, applying $X$ to the latter triangle gives a distinguished triangle in $\cT$. Now apply Proposition \ref{prop:basictri} (ii) and the $E^0$-additivity hypothesis on $X$ to finish the proof. 
\end{proof}

\begin{ex}\label{ex:khtri}
Theorem \ref{thm:basictrix} applies to $X=KH$ and arbitrary $E$, generalizing \cite{abc}*{Theorem 8.4} from the row-finite to the general case. Recall a ring $A$ is \emph{$K_n$-regular} if for every $m\ge 1$, the inclusion $A\to A[t_1,\dots,t_m]$ induces an isomorphism $K_n(R)\to K_n(R[t_1,\dots,t_m])$. We call $A$ \emph{$K$-regular} if it is $K_n$-regular for all $n$. By \cite{weih}*{Proposition 1.5}, the canonical map $K(A)\to KH(A)$ is a weak equivalence when $A$ is $K$-regular. For example, if $R$ is an $\ell$-algebra which is a regular supercoherent ring, then $L(E)\otimes R$ is $K$-regular (by the argument of \cite{abc}*{page 23}), so  we may replace $KH$ by $K$ to obtain the following triangle in the homotopy category of spectra which generalizes \cite{abc}*{Theorem 7.6}
\[
\xymatrix{K(R)^{(\reg(E))}\ar[r]^(.55){I-A_E^t}& K(R)^{(E^0)}\ar[r]& K(L(E)\otimes R).}
\]
In particular this applies when $R=\ell$ is a field.
 When $E^0$ is finite and $\ell$ is arbitrary, Theorem \ref{thm:basictrix} also applies to the universal homology theory $j:\aha\to kk$ of Theorem \ref{thm:ftg}. In particular, if $\#E^0<\infty$ we have a triangle in $kk$
\begin{equation}\label{tri:basic2}
\xymatrix{j(\ell^{\reg(E)})\ar[r]^{I-A_E^t}&j(\ell^{E^0})\ar[r]& j(L(E)).}
\end{equation}
In particular $L(E)$ belongs to the bootstrap category of \cite{kkwt}*{Section 8.3} whenever $E^0$ is finite, or equivalently, when $L(E)$ is unital \cite{libro}*{Lemma 1.2.12}. 
\end{ex}

\begin{rem}\label{rem:01} When $E$ is finite, we can also fit $L(E)$ into a $kk$-triangle associated to a matrix with entries in $\{ 0,1 \}$. Let $B'_E \in \{ 0,1 \}^{(E^1\coprod\sink(E)) \times (E^1\coprod\sink(E))}$, 
\[
(B'_E)_{x,y} =\left\{\begin{matrix} \delta_{r(x),s(y)}& x,y\in E^1\\ \delta_{r(x),y} & x\in E^1,y\in\sink(E)\\ 0 & x\in\sink(E)\end{matrix}\right.
\]
The matrix $B'_E=A'_{E'}$ is the incidence matrix of the maximal out-split graph $E'$ of \cite{libro}*{Definition 6.3.23}. Since by \cite{libro}*{Proposition 6.3.25},
$L(E)\cong L(E')$ in $\aha$, \eqref{tri:basic2} gives a triangle
\[
\xymatrix{j(\ell^{E^1})\ar[r]^(0.4){I-B^t_E}& j(\ell^{E^1\coprod\sink(E)})\ar[r]& j(L(E)).}
\]
Here $I,B^t_E\in (E^1\coprod\sink(E)) \times E^1$ are obtained from the identity matrix and from $(B'_E)^t$ by removing the columns corresponding to sinks.
\end{rem}

\begin{rem}\label{rem:extkk'} In \cite{kkwt}, a functor $j':\aha\to kk'$ was constructed that is universal for those homotopy invariant and $M_\infty$-stable homology theories which are excisive with respect to all, not just the linearly split short exact sequences of algebras \eqref{extensionE}. The suspension functor in $kk'$ is induced by Wagoner's suspension \eqref{def:wagsus}; we have $\Omega^{-1}j=j\Sigma$. The universal property of $j$ implies that there is a triangulated functor $F:kk\to kk'$ such that $j'=Fj$, and it follows from \cite{kkwt}*{Theorem 8.2.1} that $F:KH_n(R)=kk_n(\ell,R)\to kk_n'(\ell,R)$ is an isomorphism for all $n\in \Z$ and $R\in\aha$. Note that when $E^0$ is finite and $E^1$ is countable, Theorem \ref{thm:basictrix} applies to $X=j'$. It follows that $F_n:kk(L(E),R)\to kk_n'(L(E),R)$ is an isomorphism for all $n\in\Z$
and $R\in\aha$. In particular, if $R$ is unital, $E^1$ is countable and $E^0$ is finite, then for the $\cExt$-group we have a natural map
\[
\cExt(L(E),R)\to kk_{-1}(L(E),R).
\]
\end{rem}
\begin{conve} From now on, every statement about the image under $j$ of the Cohn or Leavitt path algebras of finitely many graphs $E_1,\dots,E_n$ will refer to the $\sqcup_{i=1}^nE_i$-stable, homotopy invariant, excisive homology theory $j:\aha\to kk$.
\end{conve}

\begin{prop}\label{prop:kkiso} Let $E$ and $F$ be graphs and $\theta\in kk( L(E),L(F))$. Assume that $E^0$ and $F^0$ are finite and that $KH_i(\theta)$ is an isomorphism for $i=0,1$. Then $\theta$ is an isomorphism. In particular $KH_n (\theta)$ is an isomorphism for all $n \in \Z$.
\end{prop}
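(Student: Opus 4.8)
The plan is to reduce the claim to the fundamental triangle \eqref{tri:basic2} and play off the two triangles for $L(E)$ and $L(F)$ against each other. First I would invoke the triangle \eqref{tri:basic2} for both graphs, so that $j(L(E))$ sits in a distinguished triangle with vertices $j(\ell^{\reg(E)})$ and $j(\ell^{E^0})$, the first map being $I-A_E^t$, and similarly for $F$. The key point is that each of $j(\ell^{\reg(E)})$, $j(\ell^{E^0})$, $j(\ell^{\reg(F)})$, $j(\ell^{F^0})$ is a finite direct sum of copies of $j(\ell)$, hence lies in the smallest triangulated subcategory generated by $j(\ell)$; since $KH_*(\ell)$ is concentrated in degree $0$ (where it is $\Z$) only by assumption in later sections, here with no hypothesis on $\ell$ I cannot use that, so I must argue more carefully.

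The right framework is the following: consider the homological functor $kk(\ell, -)_* = KH_*$ together with $kk(\ell[t,t^{-1}],-)_*$, or more efficiently just observe that $j(L(E))$ and $j(L(F))$ both lie in the localizing (indeed thick) subcategory $\mathcal{B}$ generated by $j(\ell)$ — this is exactly the content of the last sentence of Example \ref{ex:khtri}, the membership in the bootstrap category. On such objects a morphism $\theta$ is an isomorphism in $kk$ if and only if it induces an isomorphism on the homological functor $kk(\ell,\Omega^n(-)) = KH_n$ for all $n\in\Z$; this is a standard argument with the triangulated subcategory of objects $C$ on which the cone of $\theta\otimes C$, or rather the relevant mapping-cone criterion, vanishes. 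Concretely: let $\theta$ have mapping cone $\operatorname{Cone}(\theta)$; the subcategory of objects $Y$ in $kk$ with $kk_*(Y,\operatorname{Cone}(\theta)) = 0$ is triangulated and, once we know $KH_*(\theta)$ is iso for $i=0,1$, I must upgrade this to all $n$. That upgrade is where the structure of the triangle is used: applying $KH_*$ to \eqref{tri:basic2} gives a long exact sequence expressing $KH_n(L(E))$ in terms of $KH_n(\ell)^{\reg(E)}$, $KH_n(\ell)^{E^0}$ and the integer matrix $I-A_E^t$, and the boundary maps degree-shift by one; so the whole graded group $KH_*(L(E))$ is determined, as a module over $KH_*(\ell)$, by the two-term complex of free modules $KH_0(\ell)^{\reg(E)}\xrightarrow{I-A_E^t}KH_0(\ell)^{E^0}$ together with its analogue in every degree, which are all the same integer matrix.

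Hence the key step: $KH_n(\theta)$ for $n=0,1$ controls $KH_n(\theta)$ for all $n$ because $\theta$ is, after applying $KH_*$, a morphism of long exact sequences built functorially from a single map of two-term complexes of finitely generated free abelian groups (the ``$f$'' of Proposition \ref{prop:basictri}). More precisely, from \eqref{tri:basic2} for $E$ and for $F$ and the fact that $\theta$ lifts to a map of triangles — which exists because $\theta$ is defined on the $L$-term and the $\ell^{(-)}$-terms are sums of $j(\ell)$, so one produces the lift on $KH_0$-level data and then uses that $kk(\ell^{(-)}, -)$ detects enough — one gets a commuting ladder; then the five lemma in each degree, combined with the observation that the maps on the $j(\ell^{(-)})$ vertices are iso in degrees $0$ and $1$ hence (being direct sums of copies of the identity-detecting functor) iso in all degrees, forces $KH_n(\theta)$ iso for all $n$. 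Once $KH_n(\theta)$ is an isomorphism for all $n\in\Z$, a final step invokes the bootstrap/thick-subcategory argument to conclude $\theta$ itself is an isomorphism in $kk$: the class of $Y\in kk$ for which $kk_*(Y,\operatorname{Cone}\theta)=0$ is thick and contains $j(\ell)$ (since $kk_*(\ell,\operatorname{Cone}\theta)=0$ is precisely the statement that $KH_*(\theta)$ is iso), hence contains $j(L(E))$; applying this with $Y=j(L(E))$ and using that $1_{j(L(E))}$ maps to $\theta$-related data shows $\operatorname{Cone}\theta = 0$, i.e. $\theta$ is invertible.

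\textbf{The main obstacle.} The delicate point is the very first reduction: producing an actual morphism of distinguished triangles relating the two copies of \eqref{tri:basic2}, lifting $\theta$ on the $L$-vertex to compatible maps on the $\ell^{(\reg)}$ and $\ell^{(E^0)}$ vertices. In a triangulated category a map on one vertex need not lift to a map of triangles; what saves us is that the other two vertices are sums of $j(\ell)$, so the relevant $\operatorname{Hom}$ and $\operatorname{Ext}^1$ obstruction groups are computed by $KH_*(\ell)$ and $KH_{*-1}(\ell)$ respectively, and the obstruction to lifting lands in $kk_1(\ell^{(\reg E)}, \text{(cone)})$, which one shows vanishes, or is killed, using precisely that $KH_0(\theta)$ and $KH_1(\theta)$ are isomorphisms. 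I expect the cleanest route is not to lift the triangle at all but to argue directly with the mapping cone of $\theta$: show $\operatorname{Cone}(\theta)$ lies in the bootstrap category, has $KH_*(\operatorname{Cone}\theta)=0$ in degrees $0$ and $1$, hence — by the long exact sequence from \eqref{tri:basic2} and an induction on the two-step filtration of bootstrap objects — in all degrees, and therefore $\operatorname{Cone}(\theta)\cong 0$ by the standard fact that a bootstrap object with vanishing $KH_*$ is zero in $kk$. That last fact is itself the thick-subcategory argument above; carrying it out carefully, tracking that everything stays inside the subcategory generated by $j(\ell)$, is the real work.
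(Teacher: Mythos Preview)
Your proposal has a genuine gap at the ``key step'': you never justify the upgrade from $KH_i(\theta)$ iso for $i=0,1$ to $KH_n(\theta)$ iso for all $n$, and in fact your two suggested routes to it both fail. For the lifting route, even if you could complete $\theta$ to a morphism of triangles between the fundamental triangles \eqref{tri:basic2} for $E$ and for $F$, the induced maps on the $\ell^{(-)}$-vertices are completely uncontrolled; there is no reason they should be isomorphisms in $kk$ (indeed $\#\reg(E)$ need not equal $\#\reg(F)$, so the source and target are not even abstractly isomorphic). The five lemma you want to run has the known isomorphism at the $L$-vertex and unknown maps at the $\ell$-vertices, which is the wrong direction. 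For the cone route, from $KH_0(\theta)$ and $KH_1(\theta)$ being isomorphisms you only get $KH_1(\operatorname{Cone}\theta)=0$; you do \emph{not} get $KH_0(\operatorname{Cone}\theta)=0$, because that would require $KH_{-1}(\theta)$ injective. Since with no hypothesis on $\ell$ the graded ring $KH_*(\ell)$ is arbitrary, there is no induction or periodicity to propagate vanishing to other degrees, and hence your thick-subcategory argument never gets off the ground: the class $\{Y: kk_*(Y,\operatorname{Cone}\theta)=0\}$ need not contain $j(\ell)$.

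The paper's proof avoids comparing the two triangles altogether. It regards post-composition with $\theta$ as a natural transformation $\theta_A:kk(A,L(E))\to kk(A,L(F))$ of contravariant functors in $A$, and applies this transformation to the \emph{single} triangle \eqref{tri:basic2} for $F$. At the vertices $j(\ell^{F^0})$ and $j(\ell^{\reg(F)})$, in degrees $0$ and $1$, the map $\theta_A$ is a finite product of copies of $KH_0(\theta)$ or $KH_1(\theta)$, hence an isomorphism by hypothesis; the five lemma then forces $\theta_{L(F)}:kk(L(F),L(E))\to kk(L(F),L(F))$ to be an isomorphism, so $\theta$ has a right inverse $\mu$. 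Now $KH_i(\mu)$ is iso for $i=0,1$ as well, and running the same argument with $E$ and $F$ interchanged shows $\mu$ has a right inverse, whence $\theta$ is invertible. The point is that this Yoneda-style trick produces a one-sided inverse using \emph{exactly} the two degrees you are given, without ever needing the all-degrees statement you were trying to prove as an intermediate step; the latter becomes a consequence rather than an input.
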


\begin{proof}
The map $\theta$ induces a natural transformation $\theta_A:kk(A,L(E))\to kk(A,L(F))$ $(A\in\aha)$. Our hypothesis that $KH_i(\theta)$ is an isomorphism for 
$i=0,1$ says that $\theta_{\Omega^{-i}j(\ell)}$ is an isomorphism. Since $F^0$ is finite by assumption, this implies that also $\theta_{\Omega^{-i}j(\ell^{F^0})}$ and $\theta_{\Omega^{-i}j(\ell^{\reg(F)})}$ are isomorphisms. Hence applying $\theta:kk(-,L(E))\to kk(-,L(F))$ to the triangle
\[
\xymatrix{j(\ell^{\reg(F)})\ar[r]^(.55){I-A_F^t}&j(\ell^{F^0})\ar[r]& j(L(F))}
\]
and using the five lemma, we obtain that $\theta_{L(F)}$ is an isomorphism. In particular there is an element $\mu\in kk(L(F),L(E))$ such that $\mu\theta=1_{L(F)}$. Our hypothesis implies that $KH_i(\mu)$ must be an isomorphism for $i=0,1$. Hence reversing the role of $E$ and $F$ in the previous argument shows that 
$\mu$ has a left inverse. It follows that $\theta$ is an isomorphism.\end{proof}

\begin{rem}\label{rem:ojokkiso} The conclusion of Proposition \ref{prop:kkiso} does not follow if we only assume that there are group isomorphisms $\theta_i:KH_i(L(E))\iso KH_i(L(F))$ $(i=0,1)$. For example, over $\ell=\Q$, $K_0(L_0)=K_0(L_1)=\Z$ and 
$K_1(L_0)=\Q^*\cong \Z/2\Z\oplus\Z^{(\N)}\cong K_1(L_1)$. However $L_0$ and $L_1$ are not isomorphic in $kk$, since they have different periodic cyclic homology: $HP_1(L_0)=0$ and $HP_1(L_1)=\Q$. 
\end{rem}
\section{A structure theorem for Leavitt path algebras in \emph{kk}}\label{sec:leinkk}

\begin{stan}\label{stan} From here on, we shall assume that the commutative base ring $\ell$ satisfies the following conditions.
\begin{itemize}
\item[i)] $KH_{-1}(\ell)=0$.
\item[ii)] The natural map $\Z=K_0(\Z)=KH_0(\Z)\to KH_0(\ell)$ is an isomorphism.
\end{itemize}
 Moreover, all graphs considered henceforth are assumed to have finitely many vertices. In particular, all Leavitt path algebras will be unital.  
\end{stan}

\begin{rem}\label{rem:stansupercoh}
Any regular supercoherent ground ring $\ell$ satisfies standing assumption i), and moreover any Leavitt path algebra over $\ell$ is $K$-regular. Hence all statements of this section are valid for regular supercoherent $\ell$ satisfying standing assumption ii), with $K_0$ substituted for $KH_0$. In particular, this applies when $\ell=\Z$ or any field.   
\end{rem}

\begin{defi}\label{defi:ext}
Let $L(E)$ the Leavitt path algebra associated to the graph $E$. Put
\[
KH^1(L(E))=kk_{-1}(L(E),\ell).
\]
It follows from \eqref{tri:basic2} and the standing assumptions that, abusing notation, and writing $I$ for $I^t$, 
\begin{equation}\label{prevextell}
KH^1(L(E))\cong \coker( I-A_E : \Z^{E^0} \to \Z^{\reg(E)}).
\end{equation}
\end{defi}

\begin{prop}\label{prop:extkk}(Compare \cite{ck}*{Theorem 5.3}.) 
Let $E$ be a graph with finitely many vertices, such that $E^1$ is countable and $\sour(E)=\emptyset$. Then the natural map of Remark \ref{rem:extkk'} is a surjection
\begin{equation}\label{map:ontoext}
\cExt(L(E))\onto K H^1(L(E)).
\end{equation}
\end{prop}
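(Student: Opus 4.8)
The plan is to produce an explicit extension of $L(E)$ by $M_\infty$ whose class under the natural map $\cExt(L(E))\to kk_{-1}(L(E),\ell)=KH^1(L(E))$ generates the whole group, and to do this by hand from the fundamental triangle \eqref{tri:basic2}. First I would record that, since $E^1$ is countable and $E^0$ is finite, we may take the stabilization set to be $\N$, so that $M_\infty=M_\N$ and the results of Remark \ref{rem:extkk'} apply: there is a natural map $\cExt(L(E))\to kk_{-1}(L(E),\ell)$, and the target is $\coker(I-A_E:\Z^{E^0}\to\Z^{\reg(E)})$ by \eqref{prevextell}. The condition $\sour(E)=\emptyset$ will be used to build, via the isomorphism \eqref{map:isok}, a concrete idempotent/partial-isometry picture of $\cK(E)\cong\bigoplus_{v\in\reg(E)}M_{\cP_v}$ inside $C(E)$ that allows a class-surjectivity argument: each $\cP_v$ is infinite when $v$ has no ``incoming dead end'', which is exactly what $\sour(E)=\emptyset$ guarantees, so the matrix algebras $M_{\cP_v}$ are copies of $M_\infty$.

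Next, using the extension \eqref{ses:cele}, $0\to\cK(E)\to C(E)\to L(E)\to 0$, and Theorem \ref{thm:ftg} ($j(\varphi)$ an isomorphism, hence $j(C(E))\cong j(\ell^{E^0})$) together with Proposition \ref{prop:sumaiotas} ($j(\cK(E))\cong j(\ell^{\reg(E)})$), I would identify the connecting map $\Omega j(L(E))\to j(\cK(E))$ of this extension with (a shift of) $I-A_E^t$ up to the stated transpose convention, as already done in Proposition \ref{prop:basictri}. The point is then that the image of $\cExt(L(E))\to KH^1(L(E))$ contains the class of the Wagoner-type extension obtained by pushing \eqref{ses:cele} forward along a corner inclusion $\cK(E)\to M_\infty$; by Lemma \ref{lem:wagoner} every such pushout is classified by a homomorphism $L(E)\to\Sigma(\ell)$, i.e.\ by an element of $[L(E),\Sigma(\ell)]_{M_2}$, and the map to $kk_{-1}$ factors through $\cExt(L(E))$ by construction of the natural transformation $F$ in Remark \ref{rem:extkk'}. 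So it remains to see the composite $[L(E),\Sigma(\ell)]_{M_2}^+=\cExt(L(E))\to kk_{-1}(L(E),\ell)$ hits everything.

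For surjectivity I would argue on the level of the triangle: applying $kk(-,\ell)$ (equivalently $KH^*$) to \eqref{tri:basic2} gives an exact sequence
\[
kk(\ell^{E^0},\ell)\xrightarrow{\ (I-A_E)\ } kk(\ell^{\reg(E)},\ell)\to kk_{-1}(L(E),\ell)\to kk_{-1}(\ell^{E^0},\ell),
\]
and by Standing Assumption i) the last group $KH^{-1}(\ell)^{E^0}$ vanishes, so $kk_{-1}(L(E),\ell)=\coker(I-A_E)$ is a quotient of $kk(\ell^{\reg(E)},\ell)=\bigoplus_{\reg(E)}KH_0(\ell)=\Z^{\reg(E)}$, via the boundary map of the extension \eqref{ses:cele}. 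Every element of $kk(\ell^{\reg(E)},\ell)\cong\Z^{\reg(E)}$ comes from an actual algebra homomorphism $\ell^{\reg(E)}\to M_n$ (a sum of corner inclusions, by the computation identifying $KH_0(\ell)=\Z$), hence, composing with the isomorphism $\ell^{\reg(E)}\cong\cK(E)$ in $kk$ realized by $q$ as in the proof of Proposition \ref{prop:basictri}, from a genuine map $\cK(E)\to M_\infty$ — that is, from a genuine extension of $L(E)$ by $M_\infty$, obtained as the pushout of \eqref{ses:cele}. Its class in $\cExt(L(E))$ maps to the corresponding generator in $\coker(I-A_E)=KH^1(L(E))$ because the natural map $F$ of Remark \ref{rem:extkk'} intertwines the boundary maps (this is exactly the compatibility in the definition of a morphism of excisive homology theories). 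Running over all such homomorphisms produces a set of classes whose images generate $KH^1(L(E))$, giving the surjection \eqref{map:ontoext}.

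The main obstacle is the bookkeeping needed to realize an arbitrary element of $\Z^{\reg(E)}=kk(\ell^{\reg(E)},\ell)$ by an honest algebra homomorphism into some $M_n$ and then to transport it, through the $kk$-isomorphisms $j(q):j(\ell^{\reg(E)})\iso j(\cK(E))$ and $j(\varphi):j(\ell^{E^0})\iso j(C(E))$, into an honest pushout extension of $L(E)$ by $M_\infty$ whose image in $\cExt(L(E))$ is genuinely defined (rather than merely a virtual/formal difference). This is where the hypotheses $E^1$ countable (so $M_\infty=M_\N$ suffices and $\cP_v$ is countable), $E^0$ finite (so $L(E)$ unital and the triangle \eqref{tri:basic2} is available), and $\sour(E)=\emptyset$ (so each $\cP_v$ is infinite, making $M_{\cP_v}\cong M_\infty$ so that the corner inclusions $\cK(E)\to M_\infty$ actually exist) all get used; checking that the resulting class in $\cExt$ is well-defined and that $F$ sends it to the expected element of $\coker(I-A_E)$ is the technical heart of the argument, and I would isolate it as a lemma before assembling the proof.
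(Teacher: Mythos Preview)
Your approach is essentially the same as the paper's, but you are making it harder than necessary. The paper observes that since $\sour(E)=\emptyset$ forces each $\cP_v$ to be countably infinite, $\cK(E)\cong M_\infty\ell^{\reg(E)}$, so \eqref{ses:cele} is already an extension of $L(E)$ by $M_\infty\ell^{\reg(E)}$ with classifying map $\psi:L(E)\to\Sigma(\ell)^{\reg(E)}$; composing with each projection $\pi_v$ gives honest algebra homomorphisms $\psi_v:L(E)\to\Sigma(\ell)$, hence honest classes in $\cExt(L(E))$. In $kk'$ the extension \eqref{ses:cele} yields the triangle $j(\ell^{E^0})\to j(L(E))\overset{\psi}\to j(\Sigma(\ell)^{\reg(E)})$, and applying $kk'(-,\Sigma(\ell))$ together with Remark \ref{rem:extkk'} shows immediately that the classes $[\psi_v]$ generate $KH^1(L(E))$. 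That is the whole proof.

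Your detour through ``realizing an arbitrary element of $\Z^{\reg(E)}=kk(\ell^{\reg(E)},\ell)$ by an honest algebra homomorphism into some $M_n$'' and then transporting it through $j(q)$ and $j(\varphi)$ is unnecessary: you do not need all of $\Z^{\reg(E)}$, only the standard basis vectors, and those are hit on the nose by the projections $\psi_v$ of the single classifying map $\psi$. This also dissolves the worry you flag at the end about virtual versus genuine classes: the $\psi_v$ are genuine maps $L(E)\to\Sigma(\ell)$, so their classes in $\cExt(L(E))$ are not formal differences. The bookkeeping lemma you propose to isolate is not needed.
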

\begin{proof} 
Our hypothesis on $E$ imply that, with the notation of\eqref{parribajo}, we have $\#\cP_v=\#\N$ for all $v\in E^0$. Hence by \eqref{map:isok}, 
$\cK(E)\cong M_\infty\ell^{\reg(E)}$, and \eqref{ses:cele} is an extension of $L(E)$ by $M_\infty\ell^{\reg(E)}$. Let $\psi:L(E)\to \Sigma(\ell)^{\reg(E)}$ be its classifying map and for $v\in\reg(E)$ let $\pi_v:\Sigma(\ell)^{\reg(E)}\to\Sigma(\ell)$ be the projection, and put $\psi_v=\pi_v\psi$. With the notation of Remark \ref{rem:extkk'} we have a triangle in $kk'$
\[
j(\ell^{E^0})\to j(L(E))\overset{\psi}\lra j(\Sigma(\ell)^{\reg(E)})\to j(\Sigma(\ell)^{E^0}).
\]
Applying $kk'(-,\Sigma(\ell))$ to it and using Remark \ref{rem:extkk'} we see that $KH^1(L(E))$ is generated by the $kk$-classes of the $\psi_v$; since these are in the image of \eqref{map:ontoext}, it follows that the latter map is surjective.    
\end{proof}

\begin{lem}\label{lem:extk0}
\item[i)] The groups $KH^1(L(E))$ and $KH_0(L(E))$ have isomorphic torsion subgroups. 
\item[ii)] $\#\sing(E)=\rk(KH_0(L(E))-\rk(KH^1(L(E))$. 
\end{lem}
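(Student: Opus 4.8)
The plan is to deduce both statements from the defining presentations \eqref{intro:bf} and \eqref{prevextell}, namely that (writing $I$ for $I^t$ where appropriate)
\[
KH_0(L(E))\cong\coker(I-A_E^t\colon \Z^{\reg(E)}\to\Z^{E^0}),\qquad KH^1(L(E))\cong\coker(I-A_E\colon \Z^{E^0}\to\Z^{\reg(E)}).
\]
Set $n=\#E^0$, $m=\#\reg(E)$, and let $M=I-A_E^t\in\Z^{E^0\times\reg(E)}$, so that $M^t=I-A_E\in\Z^{\reg(E)\times E^0}$ after the notational identification; then $KH_0(L(E))=\coker M$ and $KH^1(L(E))=\coker M^t$. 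First I would record the elementary linear-algebra fact that for any integer matrix $M\in\Z^{n\times m}$, the cokernels $\coker M$ and $\coker M^t$ have the same torsion subgroup: bringing $M$ to Smith normal form $M=U D V$ with $U\in\GL_n(\Z)$, $V\in\GL_m(\Z)$ and $D$ diagonal with entries the elementary divisors $\delta_1\mid\dots\mid\delta_r$ (where $r=\rk M$), one has $\coker M\cong\Z^{n-r}\oplus\bigoplus_i \Z/\delta_i$ and $\coker M^t\cong\Z^{m-r}\oplus\bigoplus_i\Z/\delta_i$, since $M^t=V^tD^tU^t$ is also in Smith form up to the invertible matrices $V^t,U^t$. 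This immediately gives part i), and it also shows $\rk KH_0(L(E))=n-r$ and $\rk KH^1(L(E))=m-r$, hence
\[
\rk KH_0(L(E))-\rk KH^1(L(E))=n-m=\#E^0-\#\reg(E)=\#\sing(E),
\]
which is part ii).

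For the details I would be slightly careful about the fact that $A_E$ has rows indexed by $\reg(E)$ and $I$ removes the columns corresponding to singular vertices, so that $I-A_E$ and $I-A_E^t$ genuinely are transposes of one another as maps between $\Z^{E^0}$ and $\Z^{\reg(E)}$; this is exactly the abuse of notation flagged in Definition \ref{defi:ext} and in \eqref{intro:bf*}, and it is what makes the Smith-normal-form argument apply verbatim. I would also note that \eqref{intro:bf} holds by \cite{abc} under the standing assumptions \ref{stan} (which guarantee $KH_0(\ell)=\Z$, $KH_{-1}(\ell)=0$), and \eqref{prevextell} holds by applying $kk_{-1}(-,\ell)$ to the fundamental triangle \eqref{tri:basic2} together with \eqref{map:kkkh} and the standing assumptions, so both presentations are available here.

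There is essentially no hard part: the only thing to watch is that $K_0(L(E))$ and $KH_0(L(E))$ agree in the relevant degree — under the standing assumptions \ref{stan} we have $KH_0(L(E))=\coker(I-A_E^t)$ by the discussion around \eqref{intro:bf}, and $\tors K_0(L(E))=\tors KH_0(L(E))$, so the statement of Lemma \ref{lem:extk0} is consistent with the notation $\tau(E)=\tors K_0(L(E))$ used in Theorem \ref{intro:struct}. Thus the proof is just: invoke \eqref{intro:bf} and \eqref{prevextell}, reduce $I-A_E^t$ to Smith normal form, and read off the torsion subgroups and ranks of the two cokernels, whose difference of ranks is $\#E^0-\#\reg(E)=\#\sing(E)$.
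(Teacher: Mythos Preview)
Your proposal is correct and follows essentially the same approach as the paper: both arguments reduce $I-A_E^t$ to Smith normal form, observe that $D^t$ is then the Smith normal form of $I-A_E$, and read off from this that the two cokernels have identical torsion subgroup $\bigoplus_i\Z/d_i$ and ranks $\#E^0-\rk(I-A_E^t)$ and $\#\reg(E)-\rk(I-A_E)$ respectively, whose difference is $\#\sing(E)$. Your additional remarks justifying the presentations \eqref{intro:bf} and \eqref{prevextell} under the standing assumptions are accurate but not needed in the paper's version, since those presentations are already established there.
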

\begin{proof} 
Let $D=\diag(d_1,\dots,d_n,0,\dots,0)\in \Z^{E^0\times \reg(E)}$, $d_i\ge 2$, $d_i\backslash d_{i+1}$ be the Smith normal form of $I-A^t_E$. Then $D^t$ is the Smith normal form of $I-A_E$, whence
\begin{equation}\label{invafac}
\tors KH_0(L(E))=\bigoplus_{i=1}^n\Z/d_i=\tors KH^1(L(E)).
\end{equation}
Similarly, 
\begin{align*}
\rk KH_0(L(E))-\rk KH^1(L(E))=&(\#E^0-\rk(I-A_E))-(\#\reg(E)-\rk(I-A_E)))\\
=&\#\sing(E).
\end{align*}
\end{proof}
We shall write 
\[
\tau(E)=\tors KH_0(L(E)).
\]
For $ 0\le n\le\infty$, let $\cR_n$ be the graph with exactly one vertex and $n$ loops and let $L_n=L(\cR_n)$. Thus $L_0=\ell$, $L_1=\ell[t,t^{-1}]$ is the algebra of Laurent polynomials and for $2\le n<\infty$, $L_n=L(1,n)$ is the Leavitt algebra of \cite{leav}. By \eqref{tri:basic2}, $j(L_\infty)\cong j(L_0)$ and we have a distinguished triangle in $kk$
\begin{equation}\label{tri:Ln}
j(\ell)\stackrel{n-1}{\lra}j(\ell)\lra j(L_n)\qquad (n\ge 1).
 \end{equation}

\begin{thm}\label{thm:structure}
Let $E$ be a graph with finitely many vertices. Assume that $\ell$ satisfies the standing assumptions \ref{stan}. Let $d_1,\dots,d_n$ , $d_i\backslash d_{i+1}$ be the invariant factors of the finite abelian group $\tau(E)$, $s=\#\sing(E)$ and  $r=\rk(KH^1(L(E))$. Let $j:\aha\to kk$ be the universal excisive, homotopy invariant, $E$-stable homology theory. Then 
\[
j(L(E))\cong j(L_0^s\oplus L_1^r \oplus \bigoplus_{i=1}^nL_{d_i+1}).
\]
\end{thm}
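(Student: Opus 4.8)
The plan is to reduce everything to the fundamental triangle \eqref{tri:basic2} and then run a purely homological-algebra argument inside $kk$, using that $kk(\ell,\ell)=\Z$ (by the standing assumptions and \eqref{map:kkkh}) so that $j(\ell)$ behaves like a unit object. First I would put $I-A_E^t$ into Smith normal form: choose invertible integer matrices $P\in\GL_{E^0}\Z$ and $Q\in\GL_{\reg(E)}\Z$ with $P(I-A_E^t)Q=D=\diag(d_1,\dots,d_n,0,\dots,0)$, where $d_1,\dots,d_n$ (with $d_i\ge 2$, $d_i\backslash d_{i+1}$) are the invariant factors of $\tau(E)$ and the number of zero columns is $\#\reg(E)-n$. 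Since $P$ and $Q$ are invertible over $\Z$, the induced maps $j(\ell^{E^0})\to j(\ell^{E^0})$ and $j(\ell^{\reg(E)})\to j(\ell^{\reg(E)})$ are isomorphisms in $kk$ (here I use that $j$ commutes with finite direct sums and that an integer matrix acts on $j(\ell^m)\cong j(\ell)^m$ exactly as it does on $KH_0(\ell)^m=\Z^m$, as recorded in Proposition \ref{prop:basictri}(ii) and its proof). Conjugating the triangle \eqref{tri:basic2} by these isomorphisms, I get a distinguished triangle
\[
\xymatrix{j(\ell)^{\reg(E)}\ar[r]^{D}&j(\ell)^{E^0}\ar[r]& j(L(E))}
\]
with the same third vertex $j(L(E))$.

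Next I would decompose this triangle according to the block structure of $D$. Write $\#E^0=m$ and $\#\reg(E)=\#E^0-\#\sing(E)=m-s$; the matrix $D$ has $n$ rows/columns carrying the entries $d_i$, then $m-s-n$ zero columns, and $s$ extra rows not hit at all. So as a map of sums, $D$ splits as $\bigoplus_{i=1}^n (d_i\colon j(\ell)\to j(\ell)) \ \oplus\ (0\colon j(\ell)^{m-s-n}\to 0)\ \oplus\ (0\colon 0\to j(\ell)^{s})$. Because finite direct sums in a triangulated category are also products, a direct sum of distinguished triangles is distinguished; hence the cone of $D$ — which is $j(L(E))$ — is the direct sum of the cones of these pieces. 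The cone of $d_i\colon j(\ell)\to j(\ell)$ is $j(L_{d_i+1})$ by \eqref{tri:Ln} (take $n=d_i+1$ there, so the map is $d_i$). The cone of $0\colon j(\ell)^{m-s-n}\to 0$ is $\Omega^{-1}j(\ell)^{m-s-n}$; but $\Omega^{-1}j(\ell)=j(L_1)$ since \eqref{tri:Ln} with $n=1$ gives the triangle $j(\ell)\xrightarrow{0}j(\ell)\to j(L_1)$, i.e. $j(L_1)\cong\mathrm{cone}(0\colon j(\ell)\to j(\ell))=j(\ell)\oplus\Omega^{-1}j(\ell)$, and comparing this with $KH_\ast(L_1)=KH_\ast(\ell[t,t^{-1}])=KH_\ast(\ell)\oplus KH_{\ast-1}(\ell)$ one checks the suspension summand is accounted for; more cleanly, one reads off directly from \eqref{tri:Ln} that $\mathrm{cone}(0\colon j(\ell)\to j(\ell))\cong j(L_1)$ only after noting the triangle forces $j(L_1)\cong j(\ell)\oplus \Omega^{-1}j(\ell)$, so each trivial zero column contributes a $j(L_1)$ summand — thus $r=\mathrm{rk}\,KH^1(L(E))=m-s-n$ of them, matching the definition of $r$ via \eqref{prevextell} and Lemma \ref{lem:extk0}. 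Finally the cone of $0\colon 0\to j(\ell)^s$ is $j(\ell)^s=j(L_0^s)$. Assembling, $j(L(E))\cong j(L_0^s)\oplus j(L_1^r)\oplus\bigoplus_{i=1}^n j(L_{d_i+1})$, and since $j$ commutes with finite direct sums this is $j(L_0^s\oplus L_1^r\oplus\bigoplus_{i=1}^n L_{d_i+1})$, as claimed.

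The main obstacle I anticipate is the bookkeeping around the two kinds of "trivial" summands and making sure the counts $s$ and $r$ come out right: one must verify that $\#\reg(E)-n = r$ (equivalently $\mathrm{rk}(I-A_E^t)=n$ together with Lemma \ref{lem:extk0}(ii)), and separately that $\#E^0-\#\reg(E)=s=\#\sing(E)$, which is where the definition of the reduced adjacency matrix (deleting rows for sinks, columns indexed by all of $E^0$ but only regular rows) enters. A secondary technical point is justifying that a distinguished triangle can be split off as a direct sum of distinguished triangles indexed by the blocks of $D$ — this is standard (finite coproducts of triangles are triangles in a triangulated category), but it should be stated explicitly, and one should be careful that the isomorphism of triangles obtained from the Smith normal form genuinely fixes the third vertex $j(L(E))$ up to isomorphism (the five lemma / the fact that a map of triangles which is an isomorphism on two vertices is one on the third). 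None of these steps requires new input beyond \eqref{tri:basic2}, \eqref{tri:Ln}, Proposition \ref{prop:basictri}, and Lemma \ref{lem:extk0}.
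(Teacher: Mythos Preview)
Your overall strategy---put $I-A_E^t$ into Smith normal form, conjugate the fundamental triangle \eqref{tri:basic2}, then read off the cone blockwise---is exactly the paper's proof. But your block decomposition of $D$ is wrong, and this is what sends you into the tangled paragraph about $\Omega^{-1}j(\ell)$ versus $j(L_1)$. As you wrote it, the three blocks have codomain $j(\ell)^n\oplus 0\oplus j(\ell)^s=j(\ell)^{n+s}$, whereas the codomain of $D$ is $j(\ell)^{E^0}=j(\ell)^m$; unless $n+s=m$ (which is not generally the case) this does not add up. The middle block should be
\[
0\colon j(\ell)^{m-s-n}\lra j(\ell)^{m-s-n},
\]
not $0\colon j(\ell)^{m-s-n}\to 0$. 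With the correct middle block, its cone is $j(L_1)^{m-s-n}$ immediately from \eqref{tri:Ln} with $n=1$, and the false intermediate claim $\Omega^{-1}j(\ell)\cong j(L_1)$ (which you yourself note fails, since $j(L_1)\cong j(\ell)\oplus\Omega^{-1}j(\ell)$) never arises. The three blocks are then $d_i\colon j(\ell)\to j(\ell)$ for $i=1,\dots,n$, the zero map $j(\ell)^r\to j(\ell)^r$ with $r=m-s-n$, and $0\colon 0\to j(\ell)^s$; their cones are $\bigoplus_i j(L_{d_i+1})$, $j(L_1)^r$, and $j(L_0)^s$ respectively, and the count $r=\#\reg(E)-\rk(I-A_E^t)=\rk KH^1(L(E))$ matches \eqref{prevextell} and Lemma~\ref{lem:extk0} as you say. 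Once this bookkeeping is corrected your argument is complete and coincides with the paper's.
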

\begin{proof}
Let $D=\diag(d_1,\dots,d_n,0,\dots,0)\in \Z^{E^0\times \reg(E)}$. Then there are $P \in \Gl_{\#E^0}\Z$, $Q\in \Gl_{\# \reg(E)}\Z$ such that $P (I-A_E^t) Q = D$ where $D :=  diag(d_1, \dots, d_r, 0, \dots, 0)$. Hence we have the following commutative square in $kk$ with vertical isomorphisms
\[
\xymatrix{j(\ell^{\reg(E)})\ar[d]^{Q^{-1}}\ar[r]^{I-A_E^t}& j(\ell^{E^0})\ar[d]^P\\
j(\ell^{\reg(E)})\ar[r]_D&j(\ell^{E^0})}
\]
Hence both rows have isomorphic cones. By \eqref{tri:basic2}, the cone of the top row is $L(E)$; by \eqref{tri:Ln} and Lemma \ref{lem:extk0} that of the bottom  row is $L_0^s\oplus L_1^r \oplus \bigoplus_{i=1}^nL_{d_i+1}$. 
	\end{proof}

\begin{coro}\label{coro:kkk0ext}
The following are equivalent for graphs $E$ and $F$ with finitely many vertices.
\item[i)] $j(L(E)) \cong j(L(F))$.
\item[ii)] $KH_0(L(E)) \cong KH_0(L(F))$ and $KH^1(L(E)) \cong KH^1(L(F))$.
\item[iii)]$KH_0(L(E)) \cong KH_0(L(F))$ and $\#\sing(E) = \#\sing(F)$.
\end{coro}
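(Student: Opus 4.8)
The plan is to run the cycle \textrm{(i)}$\Rightarrow$\textrm{(ii)}$\Leftrightarrow$\textrm{(iii)}$\Rightarrow$\textrm{(i)}, with essentially all the content already packaged in Theorem \ref{thm:structure} and Lemma \ref{lem:extk0}, so that what remains is bookkeeping with finitely generated abelian groups.

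\emph{The implication \textrm{(i)}$\Rightarrow$\textrm{(ii)} is formal.} By \eqref{map:kkkh} we have $KH_0(L(E))=kk_0(\ell,L(E))$, and by Definition \ref{defi:ext} we have $KH^1(L(E))=kk_{-1}(L(E),\ell)$. The first is covariant and the second contravariant in the object $j(L(E))\in kk$. Hence an isomorphism $j(L(E))\cong j(L(F))$ in $kk$ induces isomorphisms $KH_0(L(E))\cong KH_0(L(F))$ and $KH^1(L(E))\cong KH^1(L(F))$, which is \textrm{(ii)}.

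\emph{The equivalence \textrm{(ii)}$\Leftrightarrow$\textrm{(iii)}.} Here I would invoke Lemma \ref{lem:extk0}. Part (i) of that lemma gives $\tors KH_0(L(E))=\tors KH^1(L(E))=\tau(E)$, and likewise for $F$. Since $KH_0(L(E))$ and $KH^1(L(E))$ are finitely generated abelian groups, each is determined up to isomorphism by its rank together with the (common) torsion group $\tau(E)$; thus $KH_0(L(E))\cong KH_0(L(F))$ is equivalent to the conjunction $\tau(E)\cong\tau(F)$ and $\rk KH_0(L(E))=\rk KH_0(L(F))$, and similarly for $KH^1$. Now assume $KH_0(L(E))\cong KH_0(L(F))$, so that the torsion subgroups and $\rk KH_0$ already match; by Lemma \ref{lem:extk0}(ii),
\[
\#\sing(E)=\rk KH_0(L(E))-\rk KH^1(L(E)),
\]
and the same for $F$, so the remaining condition of \textrm{(ii)}, namely $\rk KH^1(L(E))=\rk KH^1(L(F))$, is equivalent to $\#\sing(E)=\#\sing(F)$, the remaining condition of \textrm{(iii)}. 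This yields \textrm{(ii)}$\Leftrightarrow$\textrm{(iii)}.

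\emph{The implication \textrm{(iii)}$\Rightarrow$\textrm{(i)}.} Assuming \textrm{(iii)}, the equivalence just proved also gives \textrm{(ii)}, hence $\tau(E)\cong\tau(F)$ — so $E$ and $F$ have the same invariant factors $d_1\mid\cdots\mid d_n$ — together with $r:=\rk KH^1(L(E))=\rk KH^1(L(F))$ and $s:=\#\sing(E)=\#\sing(F)$. Theorem \ref{thm:structure}, whose proof applies verbatim to the $E\sqcup F$-stable $j$ (it uses only the triangle \eqref{tri:basic2}, the triangles \eqref{tri:Ln}, and a change of basis over $\Z$, all valid for any homotopy invariant, excisive, $E$- and $F$-stable theory), then gives
\[
j(L(E))\cong j\Bigl(L_0^s\oplus L_1^r\oplus\bigoplus_{i=1}^n L_{d_i+1}\Bigr)\cong j(L(F)),
\]
closing the cycle. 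I do not expect a genuine obstacle here: the only step that needs attention is unpacking an abstract isomorphism $KH_0(L(E))\cong KH_0(L(F))$ into the equalities of rank and torsion that feed into Theorem \ref{thm:structure}, and that is immediate from the structure theory of finitely generated abelian groups.
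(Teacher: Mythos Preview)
Your argument is correct and follows the same approach as the paper, which simply declares the corollary ``immediate from Lemma \ref{lem:extk0} and Theorem \ref{thm:structure}''; you have just written out explicitly the cycle \textrm{(i)}$\Rightarrow$\textrm{(ii)}$\Leftrightarrow$\textrm{(iii)}$\Rightarrow$\textrm{(i)} and the finitely-generated-abelian-group bookkeeping that the paper leaves to the reader.
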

\begin{proof}
Immediate from Lemma \ref{lem:extk0} and Theorem \ref{thm:structure}.
\end{proof}

\begin{rem}\label{rem:rt}
Let $E$ and $F$ be as in Corollary \ref{coro:kkk0ext}. Assume in addition that $\ell$ is a field, that $L(E)$ and $L(F)$ are simple and that $\inf(E)\ne\emptyset\ne \inf(F)$. In \cite{rt}*{Theorem 7.4}, E. Ruiz and M. Tomforde show that under these assumptions condition iii) of Corollary \ref{coro:kkk0ext} is equivalent to the existence of a Morita equivalence between $L(E)$ and $L(F)$. It follows that for such $E$ and $F$, the algebras $L(E)$ and $L(F)$ are isomorphic in $kk$ if and only if they are Morita equivalent. Ruiz and Tomforde show also that under the additional assumption that the group of invertible elements $U(\ell)$ has no free quotients, the condition that $\#\sing(E) = \#\sing(F)$ in iii) can be replaced by the condition that $K_1(L(E))\cong K_1(L(F))$. The additional assumption guarantees that $\rk(K_1(L(E)))=\rk(\ker(1-A_E^t))=\rk(KH^1(L(E))$ whenever $\#E^0<\infty$, so that $\#\sing(E)=\rk(K_0(L(E))-\rk(K_1(L(E)))$. 
\end{rem}

\section{A canonical filtration in \emph{kk(L(E),R)}}\label{sec:canofil}

Let $\ell$ be a ground ring satisfying the Standing assumptions \ref{stan}, let $E$ be a graph with finitely many vertices, $L(E)$ its Leavitt path algebra over $\ell$, and $n\in\Z$. It follows from \eqref{tri:basic1} that we have an exact sequence
\begin{equation}\label{seq:cup}
0\to KH_n(\ell)\otimes KH_0(L(E))\lra KH_n(L(E))\to\ker((I-A_E^t)\otimes KH_{n-1}(\ell))\to 0.
\end{equation} 
\begin{lem}\label{lem:otimesK0phi}
The map $KH_n(\ell)\otimes KH_0(L(E))\lra KH_n(L(E))$ of \eqref{seq:cup} is the cup product map of Example \ref{ex:tenso}.
\end{lem}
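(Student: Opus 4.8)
The plan is to trace the origin of the exact sequence \eqref{seq:cup}: it arises by applying $KH_* = kk_*(\ell,-)$ to the distinguished triangle \eqref{tri:basic1}, so every map in it is, by definition, induced by a morphism of that triangle. First I would recall that the short exact sequence \eqref{seq:cup} is really the extraction, via the long exact sequence of the triangle
\[
\xymatrix{j(\ell^{(\reg(E))})\ar[r]^f& j(\ell^{(E^0)})\ar[r]^g& j(L(E))},
\]
of the cokernel of $KH_n(f) = (I-A_E^t)\otimes KH_n(\ell)$ and the kernel of $KH_{n-1}(f)$; concretely the left-hand map $KH_n(\ell)\otimes KH_0(L(E))\to KH_n(L(E))$ is nothing but $KH_n(g)$ after identifying $KH_n(\ell^{(E^0)})=KH_n(\ell)\otimes\Z^{(E^0)}$ and $KH_n(L(E))$ with $KH_n(\ell)\otimes KH_0(L(E))$ in the relevant degrees using that $KH_n(\ell^{(E^0)})\to KH_n(L(E))$ is surjective onto the stated subgroup. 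So the claim reduces to: the map on $KH_n$ induced by $g\colon j(\ell^{(E^0)})\to j(L(E))$ agrees with the cup-product pairing $KH_n(\ell)\otimes KH_0(L(E))\to KH_n(L(E))$ under the identification $g_*\colon \Z^{(E^0)} = KH_0(\ell^{(E^0)})\twoheadrightarrow KH_0(L(E))$.

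The key step is naturality of the cup product of Example \ref{ex:tenso} with respect to $j$-morphisms in the second variable. Recall $\cup\colon kk(\ell,\ell)\otimes kk(\ell,B)\to kk(\ell,\ell\otimes B)=kk(\ell,B)$ is induced by the tensor functor, and for a morphism $\beta\colon j(B)\to j(B')$ one has $(1_\ell\otimes\beta)\circ(\xi\cup\eta)=\xi\cup(\beta\circ\eta)$ because tensoring with $\ell$ is the identity functor and the tensor product of triangulated functors is functorial. Applying this with $B=\ell^{(E^0)}$, $B'=L(E)$, $\beta=j(g)$: for $\xi\in KH_n(\ell)=kk_n(\ell,\ell)$ and the canonical classes $\eta_v\in KH_0(\ell^{(E^0)})$ (images of $1\in KH_0(\ell)$ under the summand inclusions), we get $KH_n(g)(\xi\cup\eta_v) = \xi\cup KH_0(g)(\eta_v)$, i.e. $KH_n(g)$ restricted to the subgroup generated by the $\xi\cup\eta_v$ is exactly the cup product paired with $KH_0(g)(\eta_v)$. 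Since the classes $KH_0(g)(\eta_v)$ generate $KH_0(L(E))$ (as $KH_0(g)$ is surjective onto $KH_0(L(E))$ by the long exact sequence and the fact that $KH_{-1}(f)$ is injective — a consequence of the Standing assumptions, cf. Lemma \ref{lem:extk0}), and since $KH_n(\ell)\otimes KH_0(L(E))$ is generated by elementary tensors $\xi\otimes KH_0(g)(\eta_v)$, the two maps agree on a generating set, hence everywhere.

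There is one bookkeeping point to be careful about: the map in \eqref{seq:cup} has source $KH_n(\ell)\otimes KH_0(L(E))$, not $KH_n(\ell^{(E^0)})$, so I must check that $KH_n(g)\colon KH_n(\ell^{(E^0)})\to KH_n(L(E))$ factors through the quotient $KH_n(\ell^{(E^0)}) = KH_n(\ell)\otimes\Z^{(E^0)}\twoheadrightarrow KH_n(\ell)\otimes KH_0(L(E))$ given by $1\otimes KH_0(g)$, and that the induced map on the quotient is the one in the exact sequence. This is exactly how the exact sequence is built: the kernel of $KH_n(g)$ is the image of $KH_n(f)=(I-A_E^t)\otimes KH_n(\ell)$, and $\coker(KH_n(f)) = KH_n(\ell)\otimes\coker(I-A_E^t)= KH_n(\ell)\otimes KH_0(L(E))$ by right-exactness of $-\otimes KH_n(\ell)$ and \eqref{intro:bf}; so the factored map is automatically the cup product by the previous paragraph. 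The main obstacle, such as it is, is purely a matter of making the identification $KH_n(\ell^{(E^0)})\cong KH_n(\ell)\otimes\Z^{(E^0)}$ compatible with the cup-product structure and then verifying that the description of $f$ in Proposition \ref{prop:basictri}(ii) as $1\otimes(I-A_E^t)$ is literally the cup product with the integer matrix — but that is precisely the content of Proposition \ref{prop:basictri}(ii), which I may assume. So the proof is short: invoke naturality of $\cup$ in the second variable applied to $j(g)$, note the generators match, and appeal to surjectivity of $KH_0(g)$.
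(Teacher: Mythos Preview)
Your proposal is correct and follows essentially the same approach as the paper: both arguments use naturality of the cup product with respect to the morphisms $f$ and $g$ of the fundamental triangle \eqref{tri:basic1}, together with the identification $KH_n(\ell)\otimes KH_0(\ell^{E^0})\cong KH_n(\ell^{E^0})$ coming from the Standing assumption $KH_0(\ell)=\Z$. The paper packages this as a single commutative diagram with vertical cup-product isomorphisms and reads off the induced map on cokernels, whereas you unwind the same diagram on generators $\xi\cup\eta_v$; the content is identical.
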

\begin{proof}
Because by assumption \ref{stan} (ii), $KH_0(\ell)=\Z$, for any finite set $X$, the cup product of Example \ref{ex:tenso} gives an isomorphism
\begin{equation}\label{map:otimes}
\cup:KH_n(\ell)\otimes KH_0(\ell^{X})\iso KH_n(\ell^{X}).
\end{equation}
Hence by \eqref{tri:basic1} we have a commutative diagram with exact rows
\[
\xymatrix{KH_n(\ell^{\reg(E)})\ar[r]^{I-A_E^t}&KH_n(\ell^{E^0})\ar[r]& KH_n(L(E))\\
          KH_n(\ell)\otimes KH_0(\ell^{\reg(E)})\ar[u]^\cup\ar[r]_{I-A_E^t}&KH_n(\ell)\otimes KH_0(\ell^{E^0})\ar[u]^\cup\ar[r]& KH_n(\ell)\otimes KH_0(L(E))\ar[u]^\cup.}
\]
\end{proof} 
Let $R$ be an algebra and $n\in\Z$. Consider the map 
\begin{equation}\label{map:khi}
KH_n:kk(L(E),R)\to \Hom_\Z(KH_n(L(E)),KH_n(R)).
\end{equation}
Define a descending filtration $\{kk(L(E),R)^i\mid 0\le i\le 2\}$ on $kk(L(E),R)$ as follows. Let  
\begin{gather}\label{filtra}
kk(L(E),R)^0=kk(L(E),R),\ \ kk(L(E),R)^1=\ker KH_0,\\
  kk(L(E),R)^2=(\ker KH_1)\cap kk(L(E),R)^1.
\end{gather}
It follows from the definition of $kk(L(E),R)^0$ and $kk(L(E),R)^1$ that $KH_0$ induces a canonical homomorphism
\begin{equation}\label{map:slice0}
kk(L(E),R)^0/kk(L(E),R)^1\to \hom(KH_0(L(E)),KH_0(R)).
\end{equation}
Let $\xi\in kk(L(E),R)^1$; by Lemma \ref{lem:otimesK0phi}, $KH_1(\xi)$ vanishes on the image of $KH_1(\ell)^{(E^0)}$, whence it induces a map $\ker(I-A_E^t)\to KH_1(R)$. Thus we have a map
\begin{equation}\label{map:slice1}
kk(L(E),R)^1/kk(L(E),R)^2\to \hom(\ker(I-A_E^t),KH_1(R)).
\end{equation} 
Let $\xi\in kk(L(E),R)^2$; embed $\xi$ into a distinguished triangle
\begin{equation}\label{tri:cxi}
C_\xi\to L(E)\overset{\xi}\lra R.
\end{equation}
We have an extension of abelian groups
\begin{equation}\label{seq:extxi}
(\cE(\xi))\quad 0\to KH_1(R)\to K_0(C_\xi)\to KH_0(L(E))\to 0.
\end{equation}
Let
\begin{equation}\label{map:slice2}
kk(L(E),R)^2\to \Ext_\Z^1(KH_0(L(E)),KH_1(R)),\quad \xi\mapsto [\cE(\xi)].
\end{equation}

\begin{thm}\label{thm:slice}
Let $E$ be a graph with finitely many vertices, $\ell$ a ring satisfying the Standing assumptions \ref{stan}, $L(E)$ the Leavitt path algebra over $\ell$ and $R$ an $\ell$-algebra. Then the maps \eqref{map:slice0}, \eqref{map:slice1} and \eqref{map:slice2} are isomorphisms. 
\end{thm}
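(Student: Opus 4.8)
\emph{Proof plan.} Write $P=\ell^{E^0}$, $Q=\ell^{\reg(E)}$, so that triangle \eqref{tri:basic1} reads $j(Q)\xrightarrow{f}j(P)\xrightarrow{g}j(L(E))\xrightarrow{\partial}\Omega^{-1}j(Q)$. Since $j$ is excisive (hence additive on finite sums) and $E^0$ is finite, $kk_n(P,R)=KH_n(R)^{E^0}=\Hom(\Z^{E^0},KH_n(R))$ and likewise for $Q$, and by Proposition \ref{prop:basictri}(ii) the map induced by $f$ on $kk_n(-,R)$ becomes precomposition with the integer matrix $I-A_E^t\colon\Z^{\reg(E)}\to\Z^{E^0}$. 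Applying $\hom_{kk}(-,\Omega^*j(R))$ to the triangle thus gives a long exact sequence whose relevant piece is
\[
KH_1(R)^{E^0}\xrightarrow{f^*}KH_1(R)^{\reg(E)}\xrightarrow{\partial^*}kk(L(E),R)\xrightarrow{g^*}KH_0(R)^{E^0}\xrightarrow{f^*}KH_0(R)^{\reg(E)}.
\]
Because $KH_0(g)\colon\Z^{E^0}\to KH_0(L(E))$ is the canonical surjection onto $\coker(I-A_E^t)=KH_0(L(E))$ (see \eqref{intro:bf}), a homomorphism $\Z^{E^0}\to KH_0(R)$ lies in $\ker f^*$ exactly when it factors uniquely through this cokernel; hence $\ker f^*=\Hom(KH_0(L(E)),KH_0(R))$, and under this identification $g^*$ is the map $KH_0$ of \eqref{map:khi}. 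So $KH_0$ is surjective with kernel $kk(L(E),R)^1$, which $\partial^*$ identifies with $\coker(f^*\colon KH_1(R)^{E^0}\to KH_1(R)^{\reg(E)})$; in particular \eqref{map:slice0} is an isomorphism.

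Next, factoring $I-A_E^t$ as $\Z^{\reg(E)}\twoheadrightarrow\operatorname{im}(I-A_E^t)\hookrightarrow\Z^{E^0}$ and using that $0\to\operatorname{im}(I-A_E^t)\to\Z^{E^0}\to KH_0(L(E))\to0$ is a free resolution while $\ker(I-A_E^t)$ is free, an application of $\Hom(-,KH_1(R))$ produces a natural short exact sequence
\[
0\to\Ext^1_\Z(KH_0(L(E)),KH_1(R))\to\coker(f^*)\to\Hom(\ker(I-A_E^t),KH_1(R))\to0
\]
whose right-hand map is restriction along $\ker(I-A_E^t)\hookrightarrow\Z^{\reg(E)}$ and whose kernel is $\coker\bigl(\Hom(\Z^{E^0},KH_1(R))\to\Hom(\operatorname{im}(I-A_E^t),KH_1(R))\bigr)\cong\Ext^1_\Z$. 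I would then check that, transported through $\partial^*$, this restriction map agrees with \eqref{map:slice1}: for $\xi=\partial^*(\alpha)$ with $\alpha\in kk_1(Q,R)=\Hom(\Z^{\reg(E)},KH_1(R))$ one computes $KH_1(\xi)=\alpha\circ\partial_*$, where $\partial_*\colon KH_1(L(E))\to\Z^{\reg(E)}$ is the connecting map of \eqref{tri:basic1}, surjective onto $\ker(I-A_E^t)$ (that $KH_1(\xi)$ kills $\operatorname{im}(KH_1(\ell)^{E^0})$ and thus descends to $\ker(I-A_E^t)$ is exactly what \eqref{seq:cup} and Lemma \ref{lem:otimesK0phi} provide), so the induced map is $\alpha|_{\ker(I-A_E^t)}$. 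It follows that \eqref{map:slice1} is surjective with kernel $kk(L(E),R)^2$ (an element $\partial^*\alpha$ of $kk(L(E),R)^1$ is killed by $KH_1$ iff $\alpha$ vanishes on $\ker(I-A_E^t)$), hence $kk(L(E),R)^2\cong\Ext^1_\Z(KH_0(L(E)),KH_1(R))$ and \eqref{map:slice1} induces the asserted isomorphism on $kk(L(E),R)^1/kk(L(E),R)^2$.

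It remains to show that the specific map \eqref{map:slice2}, $\xi\mapsto[\cE(\xi)]$, realises this last isomorphism; this is the main obstacle, since up to here we only know source and target are abstractly isomorphic. For $\xi=\partial^*(\alpha)\in kk(L(E),R)^2$ write $\alpha=\bar\alpha\circ\pi$ with $\pi\colon\Z^{\reg(E)}\twoheadrightarrow\operatorname{im}(I-A_E^t)$ and $\bar\alpha\colon\operatorname{im}(I-A_E^t)\to KH_1(R)$; the class of $\xi$ in $kk(L(E),R)^2\cong\Ext^1_\Z(KH_0(L(E)),KH_1(R))$ is $[\bar\alpha]$, which under the standard identification is the pushout $\bar\alpha_*$ of the resolution extension $0\to\operatorname{im}(I-A_E^t)\to\Z^{E^0}\to KH_0(L(E))\to0$. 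So one must prove $[\cE(\xi)]=\bar\alpha_*[\,0\to\operatorname{im}(I-A_E^t)\to\Z^{E^0}\to KH_0(L(E))\to0\,]$. Since $Q$ and $\ell^k$ with $k=\operatorname{rk}(I-A_E^t)$ are finite products of copies of $\ell$ and (using $KH_0(\ell)=\Z$) $KH_0$ identifies $kk$-morphisms between such with integer matrices, the equality $\alpha=\bar\alpha\pi$ lifts to a factorisation of $kk$-morphisms $\alpha=\tilde\alpha\circ\bar\pi$ through $j(\ell^k)$; hence $\xi=(\Omega^{-1}\tilde\alpha)\circ\eta$ with $\eta=(\Omega^{-1}\bar\pi)\circ\partial$. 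One then computes, via the octahedral axiom applied to $j(L(E))\xrightarrow{\partial}\Omega^{-1}j(Q)\xrightarrow{\Omega^{-1}\bar\pi}\Omega^{-1}j(\ell^k)$ together with the vanishing $KH_{-1}(\ell)=0$, that the fibre $C_\eta$ sits in the short exact sequence $0\to\operatorname{im}(I-A_E^t)\to K_0(C_\eta)\to KH_0(L(E))\to0$ which is precisely the resolution extension; a second octahedron, for $j(L(E))\xrightarrow{\eta}\Omega^{-1}j(\ell^k)\xrightarrow{\Omega^{-1}\tilde\alpha}j(R)$, then exhibits $\cE(\xi)$ as the pushout of this extension along $\bar\alpha$, as needed. (One also has to verify that $\xi\mapsto[\cE(\xi)]$ is additive, which follows from the behaviour of cones under addition of morphisms.) I expect this two-octahedron computation --- matching the boundary extension coming from the triangle of $\xi$ with the algebraic pushout --- to be the delicate point; alternatively, one can use the structure theorem \ref{thm:structure} and naturality in $R$ to reduce to the cases $L(E)=L_n$, where the triangle $j(\ell)\xrightarrow{n-1}j(\ell)\to j(L_n)$ makes $\cE(\xi)$ manifestly the pushout of $0\to\Z\xrightarrow{n-1}\Z\to\Z/(n-1)\to0$.
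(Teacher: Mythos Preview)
Your treatment of \eqref{map:slice0} and \eqref{map:slice1} is essentially the paper's: apply $kk(-,R)$ to the fundamental triangle, use the free resolution \eqref{seq:freereso} to identify the kernel and cokernel of $f^*$, and check that the resulting maps are the ones defined in \eqref{map:khi} and \eqref{map:slice1}. No complaints there.

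For \eqref{map:slice2} you take a longer road than necessary. The paper avoids both the two-octahedron computation and the reduction to $L_n$ via the structure theorem. Instead it argues directly: given $\xi\in kk(L(E),R)^2$, write $\xi=\hat\xi\,\partial$ with $\hat\xi\in kk_1(\ell^{\reg(E)},R)$ (possible since $KH_0(\xi)=0$), and then use the axioms of a triangulated category to complete $\hat\xi$ and $\id_{L(E)}$ to a morphism of triangles
\[
\xymatrix{
j(\ell)^{\reg(E)}\ar[d]^{\Omega\hat\xi}\ar[r]&j(\ell)^{E^0}\ar[d]^{\theta}\ar[r]&j(L(E))\ar@{=}[d]\ar[r]^{\partial}&\Omega^{-1}j(\ell)^{\reg(E)}\ar[d]^{\hat\xi}\\
\Omega j(R)\ar[r]&C_\xi\ar[r]&j(L(E))\ar[r]_{\xi}&j(R).}
\]
Applying $kk(\ell,-)$ and using $KH_1(\xi)=0$ immediately yields a morphism of extensions from the free resolution of $KH_0(L(E))$ to $\cE(\xi)$, with the leftmost vertical arrow being $\hat\xi$; this exhibits $\cE(\xi)$ as the pushout of the resolution along $\hat\xi$, so $[\cE(\xi)]$ equals the class of $\hat\xi$ in $\Ext^1_\Z$, which is exactly the image of $\xi$ under the isomorphism already established. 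No intermediate object $\ell^k$, no octahedra, and additivity of $\xi\mapsto[\cE(\xi)]$ comes for free since the map is identified with one already known to be a group homomorphism. Your route would work, but the single map-of-triangles argument is both shorter and more transparent.
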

\begin{proof}
Observe that if $B$ is an algebra and $X$ a finite set, then the isomorphism \eqref{map:kkkh} induces an isomorphism $kk_n(\ell^X,B)\iso \hom(\Z^{X},KH_n(B))$. Using this and applying $kk(-,R)$ to the triangle \eqref{tri:basic2} we obtain an exact sequence
\begin{multline}\label{seq:fil}
\Hom(\Z^{E^0},KH_1(R))\to \Hom(\Z^{\reg(E)},KH_1(R)))\to kk(L(E),R)\\
\to\Hom(\Z^{E^0},KH_0(R))\to \Hom(\Z^{\reg(E)},KH_0(R)).
\end{multline}
Since 
\begin{equation}\label{seq:freereso}
0\to \ker(I-A_E^t)\to \Z^{\reg(E)}\to\Z^{E^0}\to KH_0(L(E))\to 0
\end{equation}
is a free $\Z$-module resolution, the kernel of the last map in \eqref{seq:fil} is\goodbreak 
\noindent $\Hom(KH_0(L(E)),KH_0(R))$, and it is straightforward to check that the induced surjection
\[
kk(L(E),R)\onto\Hom(KH_0(L(E)),KH_0(R))
\]
is precisely the map $KH_0$ of \eqref{map:khi}. Hence the cokernel of the first map in \eqref{seq:fil} is $kk(L(E),R)^1$, and again because \eqref{seq:freereso} is a free resolution, we have a short exact sequence
\begin{multline}\label{seq:fil2}
0\to\Ext^1_\Z(KH_0(L(E),KH_1(R))\to kk(L(E),R)^1\to\\
 \Hom(\ker(I-A_E^t),KH_1(R))\to 0.
\end{multline}
It is again straightforward to check that the surjective map from $kk(L(E),R)^1$ in \eqref{seq:fil2} is \eqref{map:slice1}. Hence
by \eqref{seq:fil2} we have an isomorphism 
\begin{equation}\label{map:isoext}
kk(L(E),R)^2\iso \Ext^1_\Z(KH_0(L(E),KH_1(R)) 
\end{equation}
It remains to show that the above isomorphism agrees with \eqref{map:slice2}.

 Let $\xi\in kk(L(E),R)^2$ and let $\partial:j(L(E))\to \Omega^{-1}j(\ell)^{\reg(E)}$ be the boundary map in \eqref{tri:basic2}. Because $KH_0(\xi)=0$, there is an element $\hat{\xi}\in kk_1(\ell^{\reg(E)},R)$ such that $\xi=\hat{\xi}\partial$. Hence because $kk$ is triangulated, there exists $\theta\in kk(\ell^{E^0},C_\xi)$ such that we have a map of distinguished triangles
\[
\xymatrix{
j(\ell)^{\reg(E)}\ar[d]^{\Omega j\hat{\xi}}\ar[r]&j(\ell)^{E^0}\ar[d]^\theta\ar[r]&j(L(E))\ar@{=}[d]\ar[r]^{\partial}&\Omega^{-1}j(\ell)^{\reg(E)}\ar[d]^{\hat{\xi}}\\
\Omega j(R)\ar[r]&C_\xi\ar[r]&j(L(E))\ar[r]_\xi& j(R)}
\] 
Applying the functor $kk(\ell,-)$ and using that $KH_1(\xi)=0$, we obtain a map of extensions
\begin{equation}\label{map:seqfils}
\xymatrix{
\ker(I-A_E^t)\ar[d]\ar[r]&\Z^{\reg(E)}\ar[d]^{\hat{\xi}}\ar[r]&\Z^{E^0}\ar[d]^\theta\ar[r]& K_0(L(E))\ar@{=}[d]\ar[r]&0\\
0\ar[r]&K_1(R)\ar[r]&K_0(C_\xi)\ar[r]&K_0(L(E))\ar[r]& 0}
\end{equation}
By definition, \eqref{map:isoext} maps $\xi$ to the class $[\hat{\xi}]$ of $\hat{\xi}$ modulo the image of 
\goodbreak $\Hom(\Z^{E^0},KH_1(R))$. It is clear from \eqref{map:seqfils} that $[\hat{\xi}]=[C_\xi]$.
\end{proof}

\begin{coro}\label{coro:xi=0}
Let $\xi\in kk(L(E),R)$ and let $C_\xi$ be as in \eqref{tri:cxi}. Then $\xi=0$ if and only if $KH_0(\xi)=KH_1(\xi)=0$ and the extension \eqref{seq:extxi} is split.
\end{coro}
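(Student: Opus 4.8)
The plan is to deduce this corollary directly from Theorem \ref{thm:slice} together with the filtration machinery set up just above it. The statement asserts that $\xi = 0$ in $kk(L(E),R)$ if and only if three conditions hold: $KH_0(\xi)=0$, $KH_1(\xi)=0$, and the extension $\cE(\xi)$ of \eqref{seq:extxi} splits. The ``only if'' direction is immediate: if $\xi=0$ then certainly $KH_0(\xi)=KH_1(\xi)=0$, and $C_\xi$ becomes $j(L(E))\oplus\Omega j(R)$, so the extension \eqref{seq:extxi} is the trivial one and splits. For the ``if'' direction, I would run down the filtration $kk(L(E),R)\supseteq kk(L(E),R)^1\supseteq kk(L(E),R)^2$ one slice at a time, using that each associated graded piece is computed by an honest isomorphism in Theorem \ref{thm:slice}.

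First I would observe that $KH_0(\xi)=0$ is, by the very definition \eqref{filtra}, precisely the statement that $\xi\in kk(L(E),R)^1$. Second, since \eqref{map:slice1} is an isomorphism and it sends $\xi$ to the induced map $\ker(I-A_E^t)\to KH_1(R)$ obtained from $KH_1(\xi)$ (via Lemma \ref{lem:otimesK0phi}, which guarantees $KH_1(\xi)$ kills the image of $KH_1(\ell)^{(E^0)}$), the hypothesis $KH_1(\xi)=0$ forces this induced map to vanish, hence $\xi\in kk(L(E),R)^2$. Third, on $kk(L(E),R)^2$ the map \eqref{map:slice2}, $\xi\mapsto[\cE(\xi)]$, is an isomorphism onto $\Ext^1_\Z(KH_0(L(E)),KH_1(R))$; so if $\cE(\xi)$ is a split extension, i.e. its class is $0$ in $\Ext^1_\Z$, then $\xi=0$. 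Chaining these three implications gives the result.

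The one point that needs a small amount of care — and which I expect to be the only real obstacle — is checking that the map $kk(L(E),R)^1/kk(L(E),R)^2\to\Hom(\ker(I-A_E^t),KH_1(R))$ of \eqref{map:slice1} really does send the class of $\xi$ to the homomorphism induced by $KH_1(\xi)$ on the kernel, so that ``$KH_1(\xi)=0$'' translates into ``$\xi$ lies in the next filtration step''. This is essentially built into the construction of \eqref{map:slice1}, but I would spell out that for $\xi\in kk(L(E),R)^1$ the composite with the inclusion $j(\ell^{\reg(E)})\to j(\ell^{E^0})$ (which is $I-A_E^t$) already vanishes after applying $KH_1$ on the image, so $KH_1(\xi)$ factors through $KH_1(\ell^{E^0})/\mathrm{im}(I-A_E^t)$ and restricts to the asserted map on $\ker(I-A_E^t)$; this is exactly the content used in the proof of Theorem \ref{thm:slice}. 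Once that identification is in hand, the corollary is a formal consequence.

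\begin{proof}
If $\xi=0$ then $KH_0(\xi)=KH_1(\xi)=0$ trivially, and in the triangle \eqref{tri:cxi} we may take $C_\xi=j(L(E))\oplus\Omega j(R)$, so the extension \eqref{seq:extxi} is the split one. Conversely, assume $KH_0(\xi)=KH_1(\xi)=0$ and that \eqref{seq:extxi} splits. By definition \eqref{filtra}, $KH_0(\xi)=0$ means $\xi\in kk(L(E),R)^1$. By Lemma \ref{lem:otimesK0phi} the map $KH_1(\xi)$ vanishes on the image of $KH_1(\ell)^{(E^0)}$, so it induces the homomorphism $\ker(I-A_E^t)\to KH_1(R)$ which, by construction, is the image of $\xi$ under \eqref{map:slice1}; since $KH_1(\xi)=0$ this image is $0$, and as \eqref{map:slice1} is an isomorphism by Theorem \ref{thm:slice} we get $\xi\in kk(L(E),R)^2$. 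Finally, by Theorem \ref{thm:slice} the map \eqref{map:slice2}, $\xi\mapsto[\cE(\xi)]$, is an isomorphism onto $\Ext^1_\Z(KH_0(L(E)),KH_1(R))$; the hypothesis that \eqref{seq:extxi} splits says $[\cE(\xi)]=0$, hence $\xi=0$.
\end{proof}
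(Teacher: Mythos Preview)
Your proof is correct and follows the intended route: the corollary is immediate from Theorem \ref{thm:slice}, and the paper states it without proof. One small redundancy: in your middle step you invoke Lemma \ref{lem:otimesK0phi} and the isomorphism \eqref{map:slice1} to deduce $\xi\in kk(L(E),R)^2$, but this is already the \emph{definition} \eqref{filtra}, since $kk(L(E),R)^2=(\ker KH_1)\cap kk(L(E),R)^1$; once $KH_0(\xi)=KH_1(\xi)=0$ you are in $kk(L(E),R)^2$ for free, and then the isomorphism \eqref{map:slice2} finishes.
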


In the next corollary we shall use the fact that, since $\ker(I-A_E^t)$ is a free abelian group, the canonical surjection $KH_1(L(E))\to\ker(I-A_E^t)$ admits a section

\begin{equation}\label{map:sect}
\gamma:\ker(I-A_E^t)\to KH_1(L(E)).
\end{equation}
The map $\gamma$ induces a natural transformation 
\[
\gamma^*:\Hom(KH_1(L(E)),-)\to \Hom(\ker(I-A_E^t),-)).
\]
 
\begin{coro}\label{coro:uct} (UCT) For every $n\in \Z$ we have an exact sequence
\begin{multline*}
0\to \ext^1_\Z(KH_0(L(E)), KH_{n+1}(R))\to kk_n(L(E), R)\overset{[KH_0,\gamma^*KH_1]}\lra\\
\Hom(KH_0(L(E)),KH_n(R))\oplus \Hom (\ker(I-A_E^t),KH_{n+1}(R))\to 0.
\end{multline*}
\end{coro}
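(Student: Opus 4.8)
The plan is to deduce the universal coefficient theorem from the slice computation of Theorem \ref{thm:slice} together with the section $\gamma$ of \eqref{map:sect}. First I would reduce to the case $n=0$. Using the concrete models \eqref{loop} for the loop functor, put $R'=\Omega_1^{\otimes n}\otimes R$ if $n\ge 0$ and $R'=\Omega_{-1}^{\otimes(-n)}\otimes R$ if $n<0$, so that $j(R')\cong\Omega^n j(R)$ in $kk$. Postcomposition with $\xi$ then identifies $kk(L(E),R')$ with $kk_n(L(E),R)$, and for every $m$ one has $KH_m(R')\cong kk_m(\ell,R')\cong kk_{m+n}(\ell,R)\cong KH_{m+n}(R)$; in particular $KH_0(R')\cong KH_n(R)$ and $KH_1(R')\cong KH_{n+1}(R)$, and under these identifications the maps $KH_0$ and $\gamma^*KH_1$ of the statement become the corresponding maps for $kk(L(E),R')$. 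Thus it suffices to prove that
\begin{multline*}
0\to \ext^1_\Z(KH_0(L(E)),KH_1(R'))\to kk(L(E),R')\overset{[KH_0,\gamma^*KH_1]}\lra\\
\Hom(KH_0(L(E)),KH_0(R'))\oplus \Hom(\ker(I-A_E^t),KH_1(R'))\to 0
\end{multline*}
is exact.

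Next I would invoke Theorem \ref{thm:slice} for the algebra $R'$: the descending filtration $kk(L(E),R')\supseteq kk(L(E),R')^1\supseteq kk(L(E),R')^2$ has slices identified by \eqref{map:slice0}, \eqref{map:slice1} and \eqref{map:slice2} with $\Hom(KH_0(L(E)),KH_0(R'))$, $\Hom(\ker(I-A_E^t),KH_1(R'))$ and $\ext^1_\Z(KH_0(L(E)),KH_1(R'))$ respectively, and, by the proof of Theorem \ref{thm:slice}, the composite of the quotient map $kk(L(E),R')\to kk(L(E),R')/kk(L(E),R')^1$ with \eqref{map:slice0} is precisely $KH_0$. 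The bottom stage $kk(L(E),R')^2\cong\ext^1_\Z(KH_0(L(E)),KH_1(R'))$ is to become the kernel of $[KH_0,\gamma^*KH_1]$, so the point that remains is to show that the two-step filtration on $Q:=kk(L(E),R')/kk(L(E),R')^2$ is split, i.e.\ that the extension of abelian groups
\[
0\to kk(L(E),R')^1/kk(L(E),R')^2\to Q\to kk(L(E),R')/kk(L(E),R')^1\to 0
\]
is split, and that under the resulting direct sum decomposition of $Q$ the canonical projection $kk(L(E),R')\to Q$ becomes $[KH_0,\gamma^*KH_1]$.

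For the splitting I would use $\gamma$. The assignment $\xi\mapsto KH_1(\xi)\circ\gamma$ is a homomorphism $\gamma^*KH_1\colon kk(L(E),R')\to \Hom(\ker(I-A_E^t),KH_1(R'))$ which vanishes on $kk(L(E),R')^2$, since $KH_1(\xi)=0$ there, and hence factors through $Q$. Its restriction to $kk(L(E),R')^1$ coincides with the slice isomorphism \eqref{map:slice1}: by construction, for $\xi\in\ker KH_0$ the map \eqref{map:slice1} sends $\xi$ to the unique $\bar\xi\colon\ker(I-A_E^t)\to KH_1(R')$ with $KH_1(\xi)=\bar\xi\circ p$, where $p\colon KH_1(L(E))\onto\ker(I-A_E^t)$ is the canonical surjection of which $\gamma$ is a section; since $p\gamma=\id$, we get $KH_1(\xi)\circ\gamma=\bar\xi\circ p\circ\gamma=\bar\xi$. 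Composing the inverse of \eqref{map:slice1} with $\gamma^*KH_1$ therefore produces a retraction $Q\to kk(L(E),R')^1/kk(L(E),R')^2$ of the inclusion in the displayed extension; hence that extension splits, $Q\cong \Hom(KH_0(L(E)),KH_0(R'))\oplus \Hom(\ker(I-A_E^t),KH_1(R'))$ with first coordinate $KH_0$ and second coordinate $\gamma^*KH_1$, and combining this with the identification $kk(L(E),R')^2\cong\ext^1_\Z(KH_0(L(E)),KH_1(R'))$ of \eqref{map:slice2} yields the asserted short exact sequence. Undoing the reduction to $n=0$ finishes the argument.

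The step I expect to be the main obstacle is the verification that $\gamma^*KH_1$ restricted to $kk(L(E),R')^1$ agrees with \eqref{map:slice1}, or, equivalently, that $KH_1(\xi)$ factors through $p$ whenever $KH_0(\xi)=0$; this is what makes the section $\gamma$ (rather than some arbitrary splitting, which need not exist since $\Hom(KH_0(L(E)),KH_0(R'))$ is not free) do the job. It rests on Lemma \ref{lem:otimesK0phi} and the naturality of the cup product of Example \ref{ex:tenso}, which together force $KH_1(\xi)$ to kill the image of $KH_1(\ell)\otimes KH_0(L(E))\to KH_1(L(E))$ when $KH_0(\xi)=0$, this image being exactly $\ker p$. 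Once that is in place, the remainder is bookkeeping with the filtration of Theorem \ref{thm:slice}.
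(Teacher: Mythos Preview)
Your proposal is correct and follows essentially the same approach as the paper: reduce to $n=0$ via \eqref{loop}, then use Theorem \ref{thm:slice} together with the observation that $\gamma^*KH_1$ restricted to $kk(L(E),R)^1$ coincides with the slice map \eqref{map:slice1}, so that $[KH_0,\gamma^*KH_1]$ is surjective with kernel $kk(L(E),R)^2\cong\ext^1_\Z(KH_0(L(E)),KH_1(R))$. Your phrasing in terms of splitting the two-step extension of $Q=kk(L(E),R')/kk(L(E),R')^2$ is equivalent to (and more explicit than) the paper's direct surjectivity argument, and your verification that $KH_1(\xi)\circ\gamma$ recovers \eqref{map:slice1} on $\ker KH_0$ spells out the point the paper takes for granted.
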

\begin{proof} In view of \eqref{loop} we may assume that $n=0$. 
By Theorem \ref{thm:slice} the map $KH_0:kk(L(E),R)\to \hom(KH_0(L(E)),KH_0(R))$ is a surjection; by definition, its kernel is $kk(L(E),R)^1$, and $\gamma^*KH_1$
induces the map \eqref{map:slice1}, which is surjective by Theorem \ref{thm:slice}. Hence $[KH_0,\gamma^*KH_1]$ is surjective, and its kernel is by definition $kk(L(E),R)^2$, which, again by Theorem \ref{thm:slice}, is $\ext^1_\Z(KH_0(L(E)),KH_1(R))$. 
\end{proof}

\begin{lem}\label{lem:kk1kh}
Let $E$ be a graph and $R$ an algebra. Assume that $\#E^0<\infty$. Then the composition map induces an isomorphism
\[
KH^1(L(E))\otimes KH_1(R)\iso kk(L(E),R)^1
\]
\end{lem}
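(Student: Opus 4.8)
The plan is to extract both sides from the fundamental triangle \eqref{tri:basic2} and then identify the resulting isomorphism with the composition product, using associativity of composition in $kk$. Recall that \eqref{tri:basic2} has the form $j(\ell^{\reg(E)})\to j(\ell^{E^0})\to j(L(E))\overset{\partial}{\to}\Omega^{-1}j(\ell^{\reg(E)})$, with first map $I-A_E^t$ and $\partial\in kk_{-1}(L(E),\ell^{\reg(E)})$. Applying $kk_*(-,\ell)$, the Standing assumptions \ref{stan} give $KH_{-1}(\ell)=0$ and $KH_0(\ell)=\Z$, so via the identification $kk_m(\ell^X,\ell)\cong\hom(\Z^X,KH_m(\ell))$ for finite $X$ one gets $kk_{-1}(\ell^{E^0},\ell)=0$; hence the connecting map $\partial^{*}\colon kk_0(\ell^{\reg(E)},\ell)\to KH^1(L(E))$, $\zeta\mapsto\partial\cdot\zeta$, is surjective with kernel $\operatorname{im}(I-A_E^t)^{*}$ (this is \eqref{prevextell}). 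Applying instead $kk_*(-,R)$, the proof of Theorem \ref{thm:slice} shows that $\partial^{*}\colon kk_1(\ell^{\reg(E)},R)\to kk(L(E),R)$ has image exactly $kk(L(E),R)^1=\ker KH_0$ and kernel $\operatorname{im}\bigl((I-A_E^t)^{*}\colon kk_1(\ell^{E^0},R)\to kk_1(\ell^{\reg(E)},R)\bigr)$.

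Next, for finite $X$ the composition product is an isomorphism $c_X\colon kk_0(\ell^X,\ell)\otimes KH_1(R)\iso kk_1(\ell^X,R)$, $\zeta\otimes\beta\mapsto\zeta\cdot\beta$ (both sides being canonically $\bigoplus_X KH_1(R)$ since $KH_0(\ell)=\Z$). Associativity of composition in $kk$ makes $c_{\reg(E)}$ and $c_{E^0}$ compatible with the maps induced by $I-A_E^t$, and makes the square with top row $c_{\reg(E)}$, bottom row the composition map $KH^1(L(E))\otimes KH_1(R)\to kk(L(E),R)$, and verticals $\partial^{*}\otimes\mathrm{id}$ and $\partial^{*}$, commute. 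I would then tensor the presentation $kk_0(\ell^{E^0},\ell)\to kk_0(\ell^{\reg(E)},\ell)\onto KH^1(L(E))$ (the first map being $(I-A_E^t)^{*}$) by $KH_1(R)$; by right-exactness of $(-)\otimes KH_1(R)$ and the $c_X$, this identifies $KH^1(L(E))\otimes KH_1(R)$ with $\coker\bigl((I-A_E^t)^{*}\colon kk_1(\ell^{E^0},R)\to kk_1(\ell^{\reg(E)},R)\bigr)$, which equals $\operatorname{im}\partial^{*}=kk(L(E),R)^1$ by the first paragraph; since $\partial^{*}\otimes\mathrm{id}$ is surjective, the composition map therefore has image $kk(L(E),R)^1$. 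A short diagram chase in the commuting square — using that $c_{\reg(E)}$ carries $\ker(\partial^{*}\otimes\mathrm{id})=\operatorname{im}((I-A_E^t)^{*}\otimes\mathrm{id})$ onto $\ker\bigl(\partial^{*}\colon kk_1(\ell^{\reg(E)},R)\to kk(L(E),R)\bigr)$ — then gives injectivity of the composition map, hence the claimed isomorphism.

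The one point requiring care is the bookkeeping in this last step: one must check that the natural identifications $kk_m(\ell^X,\ell)\otimes KH_1(R)\cong kk_{m+1}(\ell^X,R)$ are literally given by the composition product and that, keeping track of suspension shifts and of the directions of the arrows, associativity really does make both relevant squares (the one for $(I-A_E^t)^{*}$ and the one for $\partial^{*}$) commute. Once this is in place, surjectivity, injectivity, and the fact that the image lies in the filtration piece $kk(L(E),R)^1$ are all formal consequences of right-exactness of $(-)\otimes KH_1(R)$ together with the two long exact sequences of the first paragraph.
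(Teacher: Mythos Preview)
Your argument is correct and takes a genuinely different route from the paper's. The paper proves the lemma by comparing the two-step filtrations on both sides furnished by Theorem \ref{thm:slice}: it sets up a commutative diagram whose rows are the short exact sequences
\[
\Ext^1_\Z(\tau(E),\Z)\otimes KH_1(R)\hookrightarrow KH^1(L(E))\otimes KH_1(R)\onto \ker(I-A_E^t)^\vee\otimes KH_1(R)
\]
and
\[
\Ext^1_\Z(\tau(E),KH_1(R))\hookrightarrow kk(L(E),R)^1\onto \Hom_\Z(\ker(I-A_E^t),KH_1(R)),
\]
checks that the outer vertical maps are isomorphisms (using that $\tau(E)$ is finite and $\ker(I-A_E^t)$ is free of finite rank), and concludes by the five lemma. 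Your approach instead stays at the level of presentations: you express both sides as the \emph{same} cokernel $\coker\bigl((I-A_E^t)^*\colon kk_1(\ell^{E^0},R)\to kk_1(\ell^{\reg(E)},R)\bigr)$, once by tensoring the presentation \eqref{prevextell} with $KH_1(R)$ and using right-exactness, and once directly from the long exact sequence \eqref{seq:fil}. Your argument is more elementary in that it does not invoke the separate identifications $\Ext^1_\Z(\tau(E),\Z)\otimes M\cong\Ext^1_\Z(\tau(E),M)$ and $L^\vee\otimes M\cong\Hom(L,M)$, and it uses only the exact sequence \eqref{seq:fil} rather than the full statement of Theorem \ref{thm:slice}. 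The paper's approach, in exchange, makes the compatibility of the composition map with the canonical filtration explicit, which is conceptually useful for what follows. Your caveat in the last paragraph about tracking suspensions and verifying that the squares for $(I-A_E^t)^*$ and $\partial^*$ commute by associativity is exactly the point that needs checking, and it does go through.
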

\begin{proof} By our Standing assumptions, $KH_{-1}\ell=0$; it follows from this that\goodbreak $KH^1(L(E))=kk_{-1}(L(E),\ell)^1$ and that the composition map  lands in $kk(L(E),R)^1$. In particular,  writing $^\vee$ for the dual group, we have $KH^1(L(E))/kk_{-1}(L(E),\ell)^2=\ker(I-A_E^t)^\vee$; since the latter is free, tensoring with $KH_1(R)$ we obtain the top exact sequence of the commutative diagram below; the bottow row is exact by Theorem \ref{thm:slice}.
\[
\xymatrix{\Ext_\Z^1(\tau(E),\Z)\otimes KH_1(R)\ar[d]\ar@{ >-}[r]&KH^1(LE)\otimes KH_1(R)\ar[d]\ar@{->>}[r]& \ker(I-A_E^t)^\vee\otimes KH_1(R)\ar[d]\\
\Ext_\Z^1(\tau(E),KH_1(R))\ar@{ >-}[r]& kk(L(E),R)^1\ar@{->>}[r]& \Hom_\Z(\ker(I-A_E^t),KH_1(R)).}
\]
One checks, using the fact that for a free, finitely generated group $L$, $L^\vee\otimes(-)\cong\Hom_\Z(L,-)$, that the vertical arrows on the right and left are isomorphisms; it follows that the vertical arrow at the middle is an isomorphism as well. 
\end{proof}

\begin{lem}\label{lem:homk0} Let $E$ and $R$ be as in Lemma \ref{lem:kk1kh}. There is an exact sequence
\[
\ker(I-A_E)\otimes KH_0(R)\hookrightarrow\Hom(KH_0(L(E)),KH_0(R))\onto\tor^1_\Z(KH^1(L(E)),KH_0(R)).
\]
\end{lem}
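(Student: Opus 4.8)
The plan is to compute the two outer terms of the sequence from the descriptions of $KH_0(L(E))$ and $KH^1(L(E))$ as cokernels of the matrices $I-A_E^t$ and $I-A_E$, and to identify the middle term with the kernel of a tensored-up map. Write $G=KH_0(R)$ and $\psi=I-A_E:\Z^{E^0}\to\Z^{\reg(E)}$ as in \eqref{prevextell}. First I would apply the left exact contravariant functor $\Hom(-,G)$ to the right exact sequence $\Z^{\reg(E)}\xrightarrow{I-A_E^t}\Z^{E^0}\to KH_0(L(E))\to 0$ of \eqref{intro:bf}. Using the natural isomorphism $\Hom(\Z^X,G)\cong\Z^X\otimes G$ for a finite set $X$ — under which precomposition with an integer matrix becomes tensoring with its transpose — this identifies
\[
\Hom(KH_0(L(E)),G)=\ker\bigl(\psi\otimes\id_G:\Z^{E^0}\otimes G\to\Z^{\reg(E)}\otimes G\bigr).
\]

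Next I would factor $\psi$ as $\Z^{E^0}\twoheadrightarrow P\hookrightarrow\Z^{\reg(E)}$, where $P=\psi(\Z^{E^0})$, and write $\iota:P\hookrightarrow\Z^{\reg(E)}$ for the inclusion. Since $\#E^0<\infty$, both $\ker\psi\subseteq\Z^{E^0}$ and $P\subseteq\Z^{\reg(E)}$ are subgroups of finitely generated free abelian groups, hence finitely generated and free; in particular the surjection $\Z^{E^0}\twoheadrightarrow P$ splits, so tensoring $0\to\ker\psi\to\Z^{E^0}\to P\to 0$ with $G$ gives a splitting $\Z^{E^0}\otimes G\cong(\ker\psi\otimes G)\oplus(P\otimes G)$ under which $\psi\otimes\id_G$ vanishes on the first summand and equals $\iota\otimes\id_G$ on the second. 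Hence
\[
\ker(\psi\otimes\id_G)=(\ker\psi\otimes G)\oplus\ker(\iota\otimes\id_G).
\]
Moreover $0\to P\xrightarrow{\iota}\Z^{\reg(E)}\to KH^1(L(E))\to 0$ is a length one free resolution of $KH^1(L(E))=\coker(I-A_E)$ by \eqref{prevextell}, so $\ker(\iota\otimes\id_G)=\tor^1_\Z(KH^1(L(E)),G)$.

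Combining the three displays, setting $G=KH_0(R)$ and noting $\ker\psi=\ker(I-A_E)$, I obtain a short exact sequence
\[
\ker(I-A_E)\otimes KH_0(R)\hookrightarrow\Hom(KH_0(L(E)),KH_0(R))\onto\tor^1_\Z(KH^1(L(E)),KH_0(R)),
\]
in which the first map is induced by tensoring the inclusion $\ker(I-A_E)\hookrightarrow\Z^{E^0}$ with $KH_0(R)$; this is exactly the asserted sequence (and it is in fact split, although not canonically). I do not expect a genuine obstacle here: the argument is essentially a diagram chase, and the only points demanding care are the transpose bookkeeping in $\Hom(\Z^{E^0},-)\cong\Z^{E^0}\otimes(-)$, so that $I-A_E^t$ turns into $I-A_E$ (with the customary abuse of writing $I$ for $I^t$), and keeping track of which free resolution computes which derived functor. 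As a variant, one can instead run the computation after putting $I-A_E^t$ in Smith normal form, which displays every group as an explicit sum of cyclic groups, at the expense of naturality.
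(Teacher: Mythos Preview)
Your argument is correct and follows essentially the same route as the paper: identify $\Hom(KH_0(L(E)),KH_0(R))$ with $\ker\bigl((I-A_E)\otimes\id_{KH_0(R)}\bigr)$ via the transpose bookkeeping, and then read off the short exact sequence from the free resolution $0\to\ker(I-A_E)\to\Z^{E^0}\to\Z^{\reg(E)}\to KH^1(L(E))\to 0$ tensored with $KH_0(R)$. The only packaging difference is that the paper tensors the length-two resolution directly and implicitly uses $\Tor_2^{\Z}=0$, whereas you factor through the image $P$ and use the length-one resolution $0\to P\to\Z^{\reg(E)}\to KH^1(L(E))\to 0$; these are the same computation unwound at different levels of detail.
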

\begin{proof} It follows from \eqref{tri:basic2} that we have a free $\Z$-module resolution
\[
0\to \ker(I-A_E)\to (\Z^{E^0})^\vee\to (\Z^{\reg(E)})^\vee\to KH^1(L(E))\to 0.
\]
Now tensor by $KH_0(R)$ and observe that $$\ker((I-A_E)\otimes \id_{KH_0(R)})=\hom(KH_0(L(E)),KH_0(R)).$$
\end{proof}

\begin{prop}\label{prop:kun} (K\"unneth theorem) 
Let $L(E)$ and $R$ be as in Theorem \ref{thm:slice} and $n\in\Z$. Then there is an exact sequence
\begin{multline*}
0\to KH^1(L(E))\otimes KH_{n+1}(R)\oplus \ker(I-A_E)\otimes KH_n(R)\to kk(L(E),R)\\
\to\tor^1_\Z(KH^1(L(E)),KH_n(R))\to 0.
\end{multline*}
\end{prop}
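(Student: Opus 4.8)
The plan is to assemble the sequence from the filtration of Theorem~\ref{thm:slice} together with the computations of Lemmas~\ref{lem:kk1kh} and~\ref{lem:homk0}. By the periodicity isomorphisms \eqref{loop} (replacing $R$ by $\Omega_{\pm1}^{\otimes|n|}\otimes R$, which shifts $KH$-degree by $n$) it suffices to treat $n=0$, and I will do so from now on. First I would record the short exact sequence coming from the bottom step of the filtration: by Lemma~\ref{lem:kk1kh} the subgroup $kk(L(E),R)^1=\ker KH_0$ is, via the composition product, $KH^1(L(E))\otimes KH_1(R)$, while by Theorem~\ref{thm:slice} the map $KH_0$ identifies the quotient $kk(L(E),R)/kk(L(E),R)^1$ with $\Hom(KH_0(L(E)),KH_0(R))$, giving
\[
0\to KH^1(L(E))\otimes KH_1(R)\to kk(L(E),R)\overset{KH_0}{\lra}\Hom(KH_0(L(E)),KH_0(R))\to 0.
\]
I would then splice in the exact sequence of Lemma~\ref{lem:homk0},
\[
0\to\ker(I-A_E)\otimes KH_0(R)\overset{\alpha}{\lra}\Hom(KH_0(L(E)),KH_0(R))\overset{\beta}{\lra}\tor^1_\Z(KH^1(L(E)),KH_0(R))\to 0 .
\]
Setting $P=KH_0^{-1}(\operatorname{im}\alpha)\subseteq kk(L(E),R)$, this produces a short exact sequence $0\to P\to kk(L(E),R)\overset{\beta\circ KH_0}{\lra}\tor^1_\Z(KH^1(L(E)),KH_0(R))\to 0$ together with
\[
0\to KH^1(L(E))\otimes KH_1(R)\to P\to\ker(I-A_E)\otimes KH_0(R)\to 0 ,
\]
so the proposition follows once this last sequence is shown to split.

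That splitting is where the real work lies; it cannot be obtained abstractly, since $\ker(I-A_E)\otimes KH_0(R)$ is rarely projective, so I would produce the section by means of the composition pairing, mimicking the reduction to a free factor in the operator-algebraic Künneth theorem. Applying Theorem~\ref{thm:slice} with $R=\ell$ and using $KH_0(\ell)=\Z$, the map $KH_0\colon kk(L(E),\ell)\to\Hom(KH_0(L(E)),\Z)$ is surjective with kernel $kk(L(E),\ell)^1$; since $KH_0(L(E))$ is finitely generated, $\Hom(KH_0(L(E)),\Z)$ is free of finite rank, and it is canonically $\ker(I-A_E)$ (the $R=\ell$ case of the computation in the proof of Lemma~\ref{lem:homk0}). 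I would choose a $\Z$-linear section $\sigma\colon\ker(I-A_E)\to kk(L(E),\ell)$ of $KH_0$, and then, using the composition product $kk(L(E),\ell)\otimes kk(\ell,R)\to kk(L(E),R)$ and the identification $kk(\ell,R)=KH_0(R)$ of \eqref{map:kkkh}, define
\[
s\colon\ker(I-A_E)\otimes KH_0(R)\to kk(L(E),R),\qquad s(x\otimes\eta)=\eta\circ\sigma(x) .
\]
Functoriality of $KH_0$ gives $KH_0(s(x\otimes\eta))=KH_0(\eta)\circ KH_0(\sigma(x))=KH_0(\eta)\circ x$, which by the explicit description of $\alpha$ in the proof of Lemma~\ref{lem:homk0} is exactly $\alpha(x\otimes\eta)$; hence $s$ takes values in $P$ and is a section of the natural surjection $P\onto\ker(I-A_E)\otimes KH_0(R)$. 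Therefore $P\cong KH^1(L(E))\otimes KH_1(R)\oplus\ker(I-A_E)\otimes KH_0(R)$, and inserting this isomorphism into the extension $0\to P\to kk(L(E),R)\to\tor^1_\Z(KH^1(L(E)),KH_0(R))\to 0$ yields the asserted Künneth sequence; the general $n$ is then recovered from \eqref{loop} as above.

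I expect the main obstacle to be precisely this splitting step — specifically, the need to realize the section through the composition product rather than by an abstract projectivity argument, and then to verify carefully that the resulting $s$ is compatible with the connecting map $\alpha$ of Lemma~\ref{lem:homk0} (keeping the identifications $\Hom(KH_0(L(E)),\Z)\cong\ker(I-A_E)$ and $kk(\ell,R)\cong KH_0(R)$ consistent). Everything preceding it is a diagram chase through the filtration of Theorem~\ref{thm:slice}, and everything after it is formal.
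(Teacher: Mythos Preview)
Your proposal is correct and follows essentially the same route as the paper: reduce to $n=0$, use Theorem~\ref{thm:slice} and Lemma~\ref{lem:kk1kh} to obtain the filtration, pull back along the inclusion of Lemma~\ref{lem:homk0} to form the subgroup $P$ (the paper calls it $Q$), and then split the resulting sequence by lifting a section $\ker(I-A_E)\to kk(L(E),\ell)=KH^0(L(E))$ through the composition product. Your section $\sigma$ is exactly the paper's $\theta$, and your verification that $KH_0\circ s=\alpha$ matches the paper's check that $\pi\theta'=\inc$.
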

\begin{proof}
It suffices to prove the proposition for $n=0$. By Theorem \ref{thm:slice} we have a canonical surjection $\pi: kk(L(E),R)\to \Hom(KH_0(L(E)),KH_0(R))$. By Lemma \ref{lem:homk0} we have an inclusion
\begin{equation}\label{map:inckt}
\inc: \ker(I-A_E)\otimes KH_0(R)\subset\Hom(KH_0(L(E)),KH_0(R)).
\end{equation}
Let  $Q=\pi^{-1}(\ker(I-A_E)\otimes KH_0(R))$; by Lemmas \ref{lem:kk1kh} and \ref{lem:homk0} we have exact sequences
\begin{gather}
0\to Q\to kk(L(E),R)\to \tor^1_\Z(KH^1(L(E)),KH_0(R))\to 0\nonumber\\
0\to KH^1(L(E))\otimes KH_{1}(R)\to Q\to \ker(I-A_E)\otimes KH_0(R)\to 0.\label{seq:Q}
\end{gather}
We have to show that the second sequence above splits. Let $\theta:\ker(I-A_E)\to KH^0(L(E))$ be a section of the canonical projection. One checks that for $\inc$ as in \eqref{map:inckt}, the composite 
\[
\theta':\ker(I-A_E)\otimes KH_0(R)\overset{\theta\otimes \id}\lra KH^0(L(E))\otimes KH_0(R) \overset{\circ}\lra kk(L(E),R)
\]
satisfies $\pi\theta'=\inc$. It follows that the sequence \eqref{seq:Q} splits, completing the proof. 
\end{proof}

\begin{rem}\label{rem:general}
The key property of the algebra $B=L(E)$ that we have used in this section is that for some 
$m,n\in\N$ and $M\in\Z^{m\times n}$ we have a distinguished triangle in $kk$
\[
j(\ell)^n\overset{M}\lra j(\ell)^m\to j(B).
\]
All the results and proofs in this section apply to any algebra $B$ with the above property, substituting $M$ for $I-A_E^t$, and assuming of course that $\ell$ satisfies the Standing assumptions \ref{stan}. However one can show, using the Smith normal form of $M$, that any such $B$ is $kk$-isomorphic to the sum of Leavitt path algebra and a number of copies of the suspension $\Omega_{-1}\ell$. 
  
\end{rem}\begin{bibdiv}
\begin{biblist}


\bib{libro}{book}{
author={Abrams, Gene},
author={Ara, Pere},
author={Siles Molina, Mercedes},
title={Leavitt path algebras}, 
date={2017},
series={Lecture Notes in Math.},
volume={2008},
publisher={Springer},
doi={$10.1007/978-1-4471-7344-1$},
}
\comment{
\bib{aap}{article}{
   author={Abrams, G.},
   author={Aranda Pino, G.},
   title={The Leavitt path algebras of arbitrary graphs},
   journal={Houston J. Math.},
   volume={34},
   date={2008},
   number={2},
   pages={423--442},
   issn={0362-1588},
   review={\MR{2417402}},
}
}
\bib{agp}{article}{,
  title={$K_0$ of purely infinite simple regular rings},
  author={P. Ara},
  author={K. Goodearl},
  author={E. Pardo},
  journal={K-theory},
  volume={26},
  number={1},
  pages={69--100},
  year={2002},
  publisher={Springer}
}  

 \bib{alps}{article}{
   author={Abrams, Gene},
   author={Louly, Adel},
   author={Pardo, Enrique},
   author={Smith, Christopher},
   title={Flow invariants in the classification of Leavitt path algebras},
   journal={J. Algebra},
   volume={333},
   date={2011},
   pages={202--231},
   issn={0021-8693},
   review={\MR{2785945}},
}

\bib{abc}{article}{
   author={Ara, Pere},
   author={Brustenga, Miquel},
   author={Corti\~nas, Guillermo},
   title={$K$-theory of Leavitt path algebras},
   journal={M\"unster J. Math.},
   volume={2},
   date={2009},
   pages={5--33},
   issn={1867-5778},
   review={\MR{2545605}},
}
\comment{
\bib{agp}{article}{,
  title={$K_0$ of purely infinite simple regular rings},
  author={P. Ara},
  author={K. Goodearl},
  author={E. Pardo},
  journal={K-theory},
  volume={26},
  number={1},
  pages={69--100},
  year={2002},
  publisher={Springer}
}

\bib{black}{article}{,
    AUTHOR = {Blackadar, Bruce},
     TITLE = {{$K$}-theory for operator algebras},
    SERIES = {Mathematical Sciences Research Institute Publications},
    VOLUME = {5},
 PUBLISHER = {Springer-Verlag, New York},
      YEAR = {1986},
     PAGES = {viii+338},
      ISBN = {0-387-96391-X},
   MRCLASS = {46L80 (18F25 19Kxx 46M20 58G12)},
  MRNUMBER = {859867},
MRREVIEWER = {Vern Paulsen},
       DOI = {10.1007/978-1-4613-9572-0},
       URL = {http://dx.doi.org/10.1007/978-1-4613-9572-0},
}
}
\bib{www}{article}{
   author={Corti\~nas, Guillermo},
   title={Algebraic v. topological $K$-theory: a friendly match},
   conference={
      title={Topics in algebraic and topological $K$-theory},
   },
   book={
      series={Lecture Notes in Math.},
      volume={2008},
      publisher={Springer, Berlin},
   },
   date={2011},
   pages={103--165},
   review={\MR{2762555}},
}
\bib{dwclass}{article}{
author={Corti\~nas, Guillermo},
author={Montero, Diego},
title={Homotopy classification of Leavitt path algebras},
eprint={arXiv:1806.09242},
}

\bib{kkwt}{article}{
   author={Corti\~nas, Guillermo},
   author={Thom, Andreas},
   title={Bivariant algebraic $K$-theory},
   journal={J. Reine Angew. Math.},
   volume={610},
   date={2007},
   pages={71--123},
   issn={0075-4102},
   review={\MR{2359851}},
}
\bib{ck}{article}{
author={Cuntz, Joachim},
author={Krieger, Wolfgang},
title={A class of $C^*$-algebras and topological Markov chains},
journal={Inventiones Mathematicae}
}
\bib{crr}{book}{
   author={Cuntz, Joachim},
   author={Meyer, Ralf},
   author={Rosenberg, Jonathan M.},
   title={Topological and bivariant $K$-theory},
   series={Oberwolfach Seminars},
   volume={36},
   publisher={Birkh\"auser Verlag, Basel},
   date={2007},
   pages={xii+262},
   isbn={978-3-7643-8398-5},
   review={\MR{2340673}},
}
\comment{
\bib{DT}{article}{
   author={Drinen, D.},
   author={Tomforde, M.},
   title={The $C^*$-algebras of arbitrary graphs},
   journal={Rocky Mountain J. Math.},
   volume={35},
   date={2005},
   number={1},
   pages={105--135},
   issn={0035-7596},
   review={\MR{2117597}},
   doi={10.1216/rmjm/1181069770},
}
\bib{goodearl}{book}{
   author={Goodearl, K. R.},
   title={von Neumann regular rings},
   series={Monographs and Studies in Mathematics},
   volume={4},
   publisher={Pitman (Advanced Publishing Program), Boston, Mass.-London},
   date={1979},
   pages={xvii+369},
   isbn={0-273-08400-3},
   review={\MR{533669}},
}
\bib{kv}{article}{
   author={Karoubi, Max},
   author={Villamayor, Orlando},
   title={$K$-th\'eorie alg\'ebrique et $K$-th\'eorie topologique. I},
   language={French},
   journal={Math. Scand.},
   volume={28},
   date={1971},
   pages={265--307 (1972)},
   issn={0025-5521},
   review={\MR{0313360}},
   doi={10.7146/math.scand.a-11024},
}
}
\bib{leav}{article}{
   author={Leavitt, W. G.},
   title={The module type of a ring},
   journal={Trans. Amer. Math. Soc.},
   volume={103},
   date={1962},
   pages={113--130},
   issn={0002-9947},
   review={\MR{0132764}},
}
\comment{
\bib{m&m}{article}{
   author={Menal, Pere},
   author={Moncasi, Jaume},
   title={On regular rings with stable range $2$},
   journal={J. Pure Appl. Algebra},
   volume={24},
   date={1982},
   number={1},
   pages={25--40},
   issn={0022-4049},
   review={\MR{647578}},
   doi={10.1016/0022-4049(82)90056-1},
}

\bib{neeman}{book}{
 author={Neeman, Amnon},
   title={Triangulated categories},
   series={Annals of Mathematics Studies},
   volume={148},
   publisher={Princeton University Press, Princeton, NJ},
   date={2001},
   pages={viii+449},
   isbn={0-691-08685-0},
   isbn={0-691-08686-9},
   review={\MR{1812507}},
}
}
\bib{emathesis}{thesis}{
author={Rodr{\'\i }guez Cirone, Emanuel},
title={Bivariant algebraic $K$-theory categories and a spectrum for $G$-equivariant bivariant algebraic $K$-theory},
type={PhD thesis},
address={Buenos Aires},
date={2017},
}
\bib{ror}{article}{
  title={Classification of Cuntz-Krieger algebras},
  author={R{\o}rdam, Mikael},
  journal={K-theory},
  volume={9},
  number={1},
  pages={31--58},
  year={1995},
  publisher={Springer}
}
\comment{
\bib{robook}{book}{
   author={R\o rdam, M.},
   author={Larsen, F.},
   author={Laustsen, N.},
   title={An introduction to $K$-theory for $C^*$-algebras},
   series={London Mathematical Society Student Texts},
   volume={49},
   publisher={Cambridge University Press, Cambridge},
   date={2000},
   pages={xii+242},
   isbn={0-521-78334-8},
   isbn={0-521-78944-3},
   review={\MR{1783408}},
}

\bib{rosen}{book}{
   author={Rosenberg, Jonathan},
   title={Algebraic $K$-theory and its applications},
   series={Graduate Texts in Mathematics},
   volume={147},
   publisher={Springer-Verlag, New York},
   date={1994},
   pages={x+392},
   isbn={0-387-94248-3},
   review={\MR{1282290}},
}
}
\bib{roscho}{article}{
author={Rosenberg, Jonathan}, 
author={Schochet, Claude},
title={The K\"unneth theorem and the universal coefficient theorem for Kasparov's generalized $K$-functor},
journal={Duke Math. J.},
volume={55},
date={1987}, 
pages={431--474}, 
review={\MR{0894590}}, 
}
\bib{rt}{article}{
   author={Ruiz, Efren},
   author={Tomforde, Mark},
   title={Classification of unital simple Leavitt path algebras of infinite
   graphs},
   journal={J. Algebra},
   volume={384},
   date={2013},
   pages={45--83},
   issn={0021-8693},
   review={\MR{3045151}},
}

\bib{wagoner}{article}{
author={Wagoner, J. B.},
   title={Delooping classifying spaces in algebraic $K$-theory},
   journal={Topology},
   volume={11},
   date={1972},
   pages={349--370},
   issn={0040-9383},
   review={\MR{0354816}},
}
\bib{weih}{article}{
   author={Weibel, Charles A.},
   title={Homotopy algebraic $K$-theory},
   conference={
      title={Algebraic $K$-theory and algebraic number theory},
      address={Honolulu, HI},
      date={1987},
   },
   book={
      series={Contemp. Math.},
      volume={83},
      publisher={Amer. Math. Soc., Providence, RI},
   },
   date={1989},
   pages={461--488},
   review={\MR{991991}},
}

\end{biblist}
\end{bibdiv} 
\end{document}